\def\presuper#1#2%
\tikzset{%
    symbol/.style={%
        ,draw=none
        ,every to/.append style={%
            edge node={node [sloped, allow upside down, auto=false]{$#1$}}}
    }
}
\newcommand{\iso}{\xrightarrow{\sim}}	
\newcommand{\Rr}{\mathbb{R}}
\newcommand{\Cc}{\mathbb{C}}
\newcommand{\CDW}{\mathrm{CDW}}
\newcommand{\Cinfty}{\mathbf{C^{\infty}}} 
\renewcommand{\H}{\mathcal{H}} 
\newcommand{\D}{\mathcal{D}}	
\newcommand{\E}{\mathcal{E}}
\newcommand{\Gr}{\mathrm{Gr}}	
\newcommand{\Emb}{\mathscr{E}}	
\newcommand{\EmbV}{\Emb_{\V}} 
\newcommand{\F}{\mathcal{F}} 
\newcommand{\G}{\mathcal{G}}	
\newcommand{\Gop}{\mathrm{\G^{(0)}}} 
\newcommand{\Gpull}{\mathrm{\G^{(2)}}}	
\newcommand{\Gmod}{\mathrm{\Gamma_{mod}}}
\newcommand{\Hop}{\mathrm{\H^{(0)}}} 
\newcommand{\Hpull}{\mathrm{\H^{(2)}}}
\newcommand{\Kop}{\mathrm{\K^{(0)}}}
\newcommand{\K}{\mathcal{K}}	
\renewcommand{\O}{\mathcal{O}}
\newcommand{\Mor}{\mathcal{Mor}} 
\renewcommand{\L}{\mathcal{L}} 
\newcommand{\LA}{\L \A} 
\newcommand{\LG}{\L \G}	
\newcommand{\R}{\mathcal{R}}
\newcommand{\SG}{\mathcal{SG}}	
\newcommand{\Zop}{Z^t}
\newcommand{\OmegaG}{\Omega_{\G}}
\newcommand{\OmegaZ}{\Omega_{Z}}
\newcommand{\OmegaGh}{\OmegaG^{\frac{1}{2}}}
\newcommand{\OmegaZh}{\OmegaZ^{\frac{1}{2}}}
\newcommand{\OmegaZop}{\Omega_{\Zop}}
\newcommand{\OmegaZoph}{\OmegaZop^{\frac{1}{2}}}
\newcommand{\SigmaG}{\Sigma_{\G}}
\newcommand{\SigmaZ}{\Sigma_{Z}}
\newcommand{\SigmaGh}{\widehat{\Sigma}_{\G}^{\frac{1}{2}}}
\newcommand{\SigmaZh}{\widehat{\Sigma}_{Z}^{\frac{1}{2}}}
\newcommand{\V}{\mathcal{V}}		
\newcommand{\W}{\mathcal{W}}		
\newcommand{\A}{\mathcal{A}}		
\newcommand{\N}{\mathcal{N}}		
\newcommand{\End}{\mathrm{End}}		
\renewcommand{\Mor}{\mathrm{Mor}}
\newcommand{\U}{\mathcal{U}}		
\newcommand{\B}{\mathcal{B}}
\newcommand{\C}{\mathcal{C}}
\newcommand{\Diff}{\mathrm{Diff}}	
\newcommand{\supp}{\mathrm{supp}}	
\newcommand{\PsiZ}{\mathrm{\Psi(Z)^{\G}}}
\newcommand{\PsiZop}{\mathrm{\presuper{\G}{\Psi(Z^t)}}}
\newcommand{\PsiZm}{\mathrm{\Psi^m(Z)^{\G}}}
\newcommand{\pr}{\mathrm{pr}}		
\newcommand{\prop}{\pr^{(0)}}
\newcommand{\lift}{\mathrm{lift}}
\newcommand{\liftcl}{\widetilde{\lift}}
\newcommand{\scal}[2]{\langle #1, #2 \rangle}	
\newcommand{\id}{\operatorname{id}}	
\newcommand{\fop}{f^{(0)}} 
\newcommand{\codim}{\operatorname{codim}}
\newcommand{\flip}{\operatorname{f}}
\newcommand{\pair}{\operatorname{pair}}
\newcommand{\simM}{\sim_{\mathcal{M}}}
\newcommand{\Sm}{C^\infty}
\newcommand{\Smc}{C^\infty_{c}}
\newcommand{\dd} {{\mathrm{d}}}
\newcommand{\Fu} {{\mathcal{F}}}
\newcommand{\BB}{{\mathbb{B}}}
\newcommand{\BBd}{\BB^d}
\newcommand{\RR}{{\mathbb{R}}}
\newcommand{\RRd}{\RR^d}
\newcommand{\SSS}{{\mathbb{S}}}
\newcommand{\SSSd}{{\mathbb{S}^{d-1}}}
\newcommand{\cl}{{\mathrm{cl}}}
\newtheorem{Thm}{Theorem}[section]
\newtheorem{Lem}[Thm]{Lemma}
\newtheorem{Prop}[Thm]{Proposition}
\newtheorem{Cor}[Thm]{Corollary}
\theoremstyle{definition}
\newtheorem{Def}[Thm]{Definition}
\newtheorem{Exa}[Thm]{Example}
\newtheorem{Rem}[Thm]{Remark}
\begin{document}
\setcounter{page}{1}

\title{Quantization on manifolds with an embedded submanifold}

\author[Karsten Bohlen, Ren\'e Schulz]{Karsten Bohlen, Ren\'e Schulz}

\address{$^{1}$ Universit\"at Regensburg, Germany}
\email{\textcolor[rgb]{0.00,0.00,0.84}{karsten.bohlen@mathematik.uni-regensburg.de}}

\address{$^{2}$ Leibniz Universit\"at Hannover, Germany}
\email{\textcolor[rgb]{0.00,0.00,0.84}{rschulz@math.uni-hannover.de}}


\subjclass[2000]{Primary 58J05; Secondary 58B34.}

\keywords{Quantization, Lie groupoids.}


\begin{abstract}
We investigate a quantization problem which asks for the construction of an algebra 
for relative elliptic problems of pseudodifferential type associated to smooth embeddings. 
Specifically, we study the problem for embeddings in the category of compact manifolds with corners. The construction of a calculus for elliptic problems
is achieved using the theory of Fourier integral operators on Lie groupoids. 
We show that our calculus is closed under composition and furnishes a so-called noncommutative completion of the given embedding.
A representation of the algebra is defined and the continuity of the operators in the algebra on suitable Sobolev spaces is established.
\end{abstract} \maketitle


\section{Introduction}

The quantization problem for pseudodifferential operators on a given manifold with singularities is the problem of finding a reasonable class of symbols (the classical side)
and of pseudodifferential operators (the quantum side) and a map from the symbols to the operators (quantization procedure) fulfilling certain physically or mathematically motivated assumptions. 
A natural question is whether a given quantization scheme has functorial properties. It is known that in a certain sense geometric quantization can be realized as a functor, cf. \cite{L}, \cite{H}. 
In more sophisticated problems of quantization the base geometry of the manifold under consideration is of a more singular nature.
There are various types of geometric singularities a given manifold can possess, e.g. conical, cuspidal, fibered cusp and edge type singularities. 
We therefore consider a pseudodifferential calculus on a given non-compact manifold with controlled singular geometry.
Various different pseudodifferential calculi have been introduced in the literature, adapted to each type of singularities.
Examples are the $b$-calculus of Melrose and the cone calculus of Schulze as well was the (fibered-) cusp calculus. 
Recently it has been established that Lie groupoids provide a framework for the study of pseudodifferential calculi on many types of singular manifolds simultaneously.
Lie groupoids provide at the same time a natural language for the resolution of geometric singularities.
For a broad class of singular manifolds (so-called Lie manifolds) one can always associate a Lie groupoid and 
obtain a general pseudodifferential calculus, \cite{NWX}, \cite{ALN}. 

In this work we consider the quantization problem for manifolds relative to an embedded submanifold.
The general definition of a quantization consists of two sides: \emph{i)} the \emph{classical side} are the symbols which are smooth
functions living on the cotangent space, fulfilling certain uniform growth estimates, \emph{ii)} the \emph{quantum side} of operators which form
an algebra with regard to composition and adjoint.
There are two problems one has to face: \emph{i)} finding the right class of symbol spaces and \emph{ii)} finding the right
kind of natural quantization procedure.

In the standard case a calculus for elliptic problems associated in this way can be constructed, an example is Boutet de Monvel's
calculus and another is given by Nazaikinskii and Sternin \cite{NS}.
We found it more convenient to base our constructions on \cite{NS}, due to recent work of Lescure and Vassout \cite{LV}, where
the authors introduce Fourier integral operators on Lie groupoids. 

Connections between noncommutative geometry and boundary value problems have been explored in other works of Aastrup, Schrohe and Nest with a focus on index theory, \cite{ANS}, \cite{ANS2}
and recent work of Debord and Skandalis, \cite{DS}, \cite{DS2}.
They construct deformation groupoids, generalizations of Connes'es tangent groupoid, to define operators of Boutet de Monvel
type in a very general setting. This is based on the observation that an algebra for boundary
value problems, like Boutet de Monvel's algebra, can be viewed as a generalized morphism (or Morita equivalence)
of $C^{\ast}$-algebras. Ours is a different viewpoint, more in line with that of the first author \cite{B}, \cite{B2}, in that we do not use deformation groupoids. 
But we still apply the same basic philosophy of describing a generalized morphism as an abstraction of an algebra for boundary value problems. 
Earlier constructions in the literature focus on particular cases of singularites using tools individually designed for each case.
The program we want to announce in this work is an attempt to close a natural gap by uniting the theory of Fourier integral operators on groupoids and recent approaches to boundary value problems using Lie groupoids
to a general quantization procedure in one common framework. 

\subsection*{Outline}
Let us outline the contents of this paper in more detail. As a base geometry, we consider a \emph{Lie manifold} which is a compact manifold with corners
$M$ endowed with a Lie algebra of vector fields $\V \subset \Gamma(TM)$ tangent
to the boundary strata of $M$, and a vector bundle $\A \to M$ which has 
canonically a Lie algebroid structure such that $\Gamma(\A) \cong \V$.
Such manifolds usually arise as the compactification of a non-compact manifold with geometric singularities. \\
From \cite{AIN} we take the notation of an embedded, codimension $\nu$ \emph{Lie} submanifold $j \colon Y \hookrightarrow M$. In particular $Y$ is \emph{transverse} to the strata of $M$ and it is with regard to $Y$ that we consider
boundary value problems. \\
We further endow $M$ (and hence $Y$) with a Riemannian metric compatible with the Lie structure.\\
The problem now is to describe the elliptic theory associated with a pair $(M, Y)$ as above.
Simple candidates for elliptic operators are Laplacians $\Delta = \Delta_g$ for some compatible metric $g$ on $M$
and $\Delta_{\partial} = \Delta_{g_{\partial}}$ for some compatible metric $g_{\partial}$ on $Y$. 
Here the operators act on suitable Sobolev spaces $H_{\V}^s(M)$ and $H_{\W}^s(Y)$ as introduced in \cite{AIN}.\\
These pairs form the example of a \emph{relative elliptic operator} is the diagonal matrix
\[
\begin{pmatrix} \Delta_g & 0 \\ 0 & \Delta_{\partial} \end{pmatrix} \colon \begin{matrix} H_{\V}^{s_1}(M) \\ \oplus \\ H_{\W}^{s_2}(Y) \end{matrix} \to \begin{matrix} H_{\V}^{t_1}(M) \\ \oplus \\ H_{\W}^{t_2}(Y) \end{matrix}. 
\]
More generally, we would like to describe an operator algebra from the given data $j \colon Y \hookrightarrow M$ that captures
all non-commutative phenomena which arise in the study of elliptic boundary value problems (BVP's). 
We can call this \emph{non-commutative completion} of a manifold with corners and regular boundary.
Inspired by the classical BVP's (e.g. the Dirichlet and Neumann problem) we introduce additional operators.
Let $j^{\ast}$ be the restriction to $Y$ operator. 
By \cite{AIN}, Theorem 4.7 we have $j^{\ast} \colon H_{\V}^{s}(M) \to H_{\W}^{s-\frac{\nu}{2}}(Y)$ continuously for 
$s > \frac{\nu}{2}$. 
Denote by $j_{\ast}$ the formal adjoint of $j^{\ast}$. 
Unfortunately, as can be checked easily already in the standard case, operators of the form
\begin{equation}
\label{eq:opmat}
\begin{pmatrix} \Delta_g & j_{\ast} \\ j^{\ast} & \Delta_{\partial} \end{pmatrix}
\end{equation}
are not closed under composition. 
The reason is that operators in the upper left corner, the singular Green operators, of the form $P j_{\ast} j^{\ast} Q$, for
$P$ and $Q$ (pseudo-)differential operators, are not again (pseudo-)differential operators. \\
The approach of \cite{NS} is to treat each entry in \eqref{eq:opmat} as a Fourier integral operator, meaning operators whose Schwartz kernels are Lagrangian distributions associated to naturally occurring canonical relations.\\
We want to solve the problem of constructing such an algebra for boundary value problems also in the case of more singular geometric data $M$ and $Y$. Because the manifold has corners, the arising Schwartz kernels are more complicated and a resolution of 
the corner structure is required. This \emph{resolution} is fascilitated by the theory of Lie groupoids and notions from noncommutative geometry.
Our strategy for the construction of the algebra is briefly described as follows:\\
We consider from the embedding $j \colon Y \hookrightarrow M$ of Lie manifolds the corresponding Lie groupoids $\H \rightrightarrows Y$
integrating the Lie structure of $Y$ and $\G \rightrightarrows M$ integrating the Lie structure of $M$.
In the next step we lift the embedding morphism $j$ to a \emph{generalized morphism} $\H \dashrightarrow \G$ in the category
of Lie groupoids.
To describe the operators of the algebra, we work with Fourier integral operators which are associated to the 
Coste-Dazord-Weinstein (CDW) groupoids associated to $\H$ and $\G$.
We rely on the theory of Fourier integral operators on Lie groupoids from \cite{LMV} and \cite{LV}.
From the generalized morphism $\H \dashrightarrow \G$ in the category of Lie groupoids we obtain a generalized morphism
$T^{\ast} \H \dashrightarrow T^{\ast} \G$ (CDW-groupoids) in the category of symplectic groupoids.
Then we describe in a functorial way a generalized morphism of $C^{\ast}$-algebras over $\H$ to $\G$. 
This functor (noncommutative completion) has the minimal properties expected of an algebra for boundary value problems.
In this framework the singular Green operators appear as natural objects which are Morita equivalent to pseudodifferential operators on the given submanifold, see also \cite{DS}.\\
Our quantization procedure differs somewhat from that in \cite{NS}. Namely, in \cite{NS} the authors use Maslov quantization based on the standard theory of Fourier integral operators.
Here we face the problem that this type of quantization is not available on singular manifolds, except if we use Lie groupoids. We also define a Kohn-Nirenberg quantization procedure for the representations of operators defined on groupoids.
We describe the representation of a non-commutative completion which is a natural transformation, giving an extension of the representation theorem
of the groupoid algebra of pseudodifferential operators by Ammann-Lauter-Nistor in \cite{ALN}. \\
The paper is organized as follows. We give a brief introduction to the Kohn-Nirenberg quantization for
relative elliptic theory in the standard case of a compact manifold with boundary in the second Section. In the third Section we recall the definition of a manifold with corners and 
define a suitable class of embeddings. 
The fourth Section is in part a short summary of the Muhly-Renault-Williams theorem and we also formally define a noncommutative
completion of a manifold with boundary.
In the fifth Section we give several examples of Lie manifolds and the appropriate groupoids integrating particular Lie structures.
We recall the definition of the Coste-Dazord-Weinstein (CDW) groupoid in the sixth Section and we also consider actions
of CDW-groupoids, the functoriality of symplectic groupoids.
In the seventh Section we discuss Fourier integral operators on Lie groupoids and groupoid actions.
We give the conditions on the orbit foliation needed for composability, the equivariance condition and we define
the twisted product of symbols.
In the eigth Section we construct the groupoid algebra for boundary value problems or embeddings in the
category of Lie manifolds. We prove closedness under composition using the theory of the previous section.
In the final Section we show that our algebra has the properties of a noncommutative completion as defined in the
third Section. We collect some information about radial blowups and symbol spaces for Fourier integral operators in the appendix.

\section{Kohn-Nirenberg quantization and relative elliptic theory}

\label{section:KN}

In this section, we will address the case where the base geometry has no singularity, i.e. we reformulate the calculus of relative elliptic theory \cite{NS} in a form that is suitable for generalization to groupoids. The setup is as follows. Suppose $X\stackrel{i}{\hookrightarrow} M$ is a $\Sm$-embedding of smooth compact manifolds, $\dim(X)=d\leq \dim(M)=n$. The key idea of the calculus is to realize singular operations on pseudo-differential operators as compositions with certain Fourier integral operators associated to Lagrangian submanifolds that occur as natural geometric objects.\\
Then $T^{\ast} M$, the natural space for quantization, contains as natural the spaces 
\[
T^\ast X, \ T^\ast_{i(X)} M, \ N i(X) 
\]
and we are led to consider how these subspaces are (microlocally) mapped into each other under the action of an operator. Therefore we 
study the following five invariantly defined Lagrange manifolds:
\begin{align*}
\Lambda_{\Psi},\Lambda_g&\subset T^*M\times T^*M \qquad  \Lambda_c \subset T^*M\times T^*X\\
\Lambda_b&\subset T^*X\times T^*M \qquad \Lambda_{\partial} \subset T^*X\times T^*X.
\end{align*}
\begin{enumerate}
\item $\Lambda_{\Psi}=(N^*\Delta M)^\prime\subset T^*M\times T^*M$ is the twisted conormal bundle to the diagonal embedding $M\hookrightarrow \Delta_M\subset M\times M$.
\item $\Lambda_g=(N^*\Delta_{\iota(X)})^\prime\subset T^*M\times T^*M$ is the twisted conormal bundle to the composed embedding $X\hookrightarrow M\hookrightarrow \Delta_M\subset M\times M$.
\item $\Lambda_b=(N^*\mathrm{graph}(\iota))^\prime\subset T^*X\times T^*M$ is the twisted conormal bundle to the graph of the embedding $\iota$.
\item $\Lambda_c=\,^t\Lambda_b=(\,^tN^*\mathrm{graph}(\iota))^\prime\subset T^*M\times T^*X$ is the twisted conormal bundle to the transpose of the graph of the embedding $\iota$.
\item $\Lambda_{\partial}=(N^*\Delta X)^\prime\subset T^*X\times T^*X$ is the twisted conormal bundle to the diagonal embedding $X\hookrightarrow \Delta_X\subset X\times X$.
\end{enumerate}
%
In local canonical coordinates adapted to the embedding in which $X$ is given by $\{x^\prime,0\}$ the space $T^{\ast} M \times T^{\ast} M$ is given by $\{(x^{\prime}, x^{\prime \prime}, \xi^{\prime}, \xi^{\prime \prime}), (y^{\prime}, y^{\prime \prime}, \eta^{\prime}, \eta^{\prime \prime})\}$. The Lagrangians then take the form 
\begin{equation}
\label{eq:lagcoord}
\begin{aligned}
\Lambda_{\Psi}&=\left\{\big((x^\prime,x^{\prime\prime},\xi^\prime,\xi^{\prime\prime}),(x^\prime,x^{\prime\prime},\xi^\prime,\xi^{\prime\prime})\big)\right\},\\
\Lambda_g&=\left\{\big((x^\prime,0,\xi^\prime,\xi^{\prime\prime}),(x^\prime,0,\xi^\prime,\eta^{\prime\prime})\big)\right\},\\
\Lambda_b&=\left\{\big((x^\prime,\xi^\prime),(x^\prime,0,\xi^\prime,\eta^{\prime\prime})\big)\right\},\\
\Lambda_c&=\left\{\big((x^\prime,0,\xi^\prime,\xi^{\prime\prime}),(x^\prime,\xi^\prime)\big)\right\},\\
\Lambda_{\partial}&=\left\{\big((x^\prime,\xi^\prime),(x^\prime,\xi^\prime)\big)\right\}.
\end{aligned}
\end{equation}

The operators in the calculus of relative elliptic theory exhibit singularities carried by the above Lagrangians. Indeed, they can be expressed in the form
\begin{equation}
\label{eq:Sterninops}
A=\left(\begin{array}{cc}
A_{\Psi}+A_g & A_c \\ 
A_b & A_{\partial}
\end{array}\right) 
\end{equation}
where the Schwartz kernel of each $A_\bullet$ is a Fourier Integral operator associated to $\Lambda_\bullet$. In particular, $A_{\Psi}$ and $A_{\partial}$ are ordinary pseudodifferential operators on $X$ and $M$ respectively. 
We recall the composability relations between the underlying canonical relations for operators of the form \eqref{eq:Sterninops}. The compositions are summarized in Table \ref{tab:Lagrangians} (see \cite{NS}), where in the sense of matrix entries we have $\Lambda_{ij}=\Lambda_{j1}\circ \Lambda_{ij}$ for $i,j>1$.
\begin{table}[H]
\caption{Lagrangians}
\centering
\begin{tabular}{|c||c|c|c|c|c|}
\hline 
$\circ$  & $\Lambda_{\partial}$ & $\Lambda_{\Psi}$ & $\Lambda_b$ & $\Lambda_c$ & $\Lambda_g$ \\ 
\hline 
\hline
$\Lambda_{\partial}$  & $\Lambda_{\partial}$ & - & $\Lambda_b$ & - & - \\ 
\hline 
$\Lambda_{\Psi}$  & - & $\Lambda_{\Psi}$ & - & $\Lambda_c$ & $\Lambda_g$ \\ 
\hline
$\Lambda_g$  & - & $\Lambda_b$ & - & $\Lambda_{\partial}$ & $\Lambda_b$ \\ 
\hline 
$\Lambda_c$  & $\Lambda_g$ & - & $\Lambda_g$ & - & - \\  
\hline 
$\Lambda_g$  & - & $\Lambda_g$ & - & $\Lambda_c$ & $\Lambda_g$ \\  
\hline 
\end{tabular} 
\label{tab:Lagrangians}
\end{table}

\subsection{Quantization of symbols}

In \cite{NS}, the quantization procedure adopted to produce operators from symbols is as follows: to the given Lagrangian submanifolds, they associate a (``quantized'') measure and construct operators associated to symbols via Maslov's canonical operator. Here, we use instead a modified Kohn-Nierenberg quantization: since the submanifolds at hand are conormals to certain embedded submanifolds, we may use a normal decomposition and the exponential map to produce a more direct approach. \\
We first locally realize $X\subset M$ in special coordinates on $M$, $x=(x^\prime,x^{\prime\prime})$ as the set where $x^\prime$ vanishes. We then equip $M$ with a metric of product type, meaning it splits in a neighbourhood of $X$, where $X=\{(x^\prime,0)\}$ in coordinates, into $g=g_1(x)(dx^\prime)^2+g_2(x)(dx^{\prime\prime})^2$.\\
Following \cite{Mel, ALN, S}, we consider the map 
$$\Phi: TM\rightarrow M\times M,\ \Phi(x,\xi)=(x,\exp_x(-\xi)).$$
Since $X$ and $M$ are assumed compact and without boundary, the injectivity radius of their respective exponential functions $\exp_X$ and $\exp_M$ is bounded from below. Hence $\Phi$ is, when restricted to a small open neighbourhood $U$ of the zero-secton $M\subset TM$ a diffeomorphism onto an open neighbourhood of the diagonal $\Delta_M$. We may assume that $U=(TM)_r$ are the vectors of length $\leq r$ for some $r>0$, and $\Phi|_{(TM)_r}$ is called the \textit{Riemann-Weyl-fibration}. We identify the exponential map on $X$ equipped with the pulled-back metric under $\iota$ with that on $M$ restricted to $TX\subset TM$. \\
Restriction of $\Phi$ yields the following isomorphisms:
\begin{enumerate}
\item $\Phi_{\Psi}=\Phi|_{(TM)_r}$: maps $(TM)_r$ onto an open neighbourhood of the diagonal $\Delta_M\subset M\times M$. 
\item $\Phi_g=\Phi|_{(T\iota(X))_r}$: maps the tangent $T\iota(X)\subset TM$ onto an open neighbourhood of the diagonal $\Delta_{\iota(X)}\subset M\times M$.
\item $\Phi_b=\Phi|_{(T_XM)_r}$: maps the restriction of $(TM)_r$ to $X$ onto an open neighbourhood of the graph of $\iota(X)$ in $X\times M$.
\item We set $\Phi_c=\Phi_b^t$.
\item $\Phi_{\partial}=\Phi|_{(TX)_r}$: maps $(TX)_r$ onto an open neighbourhood of the diagonal $\Delta_X\subset X\times X$. 
\end{enumerate}
Recall that if $A$ is a vector bundle over $M$ (or $X$) equipped with a metric $g$, $A^*$ the canonical dual bundle, then the fibre-Fourier transform on $\Smc(A)$ is given by 
$$\Fu_{\mathrm{fiber}}^{-1}\varphi(\xi)=(2\pi)^{-\dim(M)}\int_{\pi(\xi)=\pi(\zeta)} e^{i(\xi,\zeta)_g}\varphi(\zeta)\dd\zeta.$$  
This map is then extended to a map from certain symbol spaces $S(A^*)$, to be introduced in the next section, to the space of distributions on $A$ by
$$\langle \Fu_{\mathrm{fiber}}^{-1}a,\varphi\rangle := \langle a,\Fu_{\mathrm{fiber}}^{-1}\varphi\rangle \qquad a\in S(A^*),\, \varphi\in\Smc(A)$$ 
The quantization of a symbol $a$ in a suitable symbol space $S(C_\bullet)$ with $\bullet\in \{M,g,X,b,c\}$ is then $\mathrm{Op}(a)=(\Phi_\bullet)_*(\chi \Fu_{\mathrm{fiber}}^{-1}a)$, where $\chi$ is a cut-off supported in $U=(TM)_r$ with $\chi\equiv 1$ on $M$ (identified with the zero section). This, when the appropriate symbol classes are quantized (see Section \ref{section:SymbolCalc} below) yields five classes of operators,
%
with distributional kernels on $M\times M$, $M\times M$, $X\times M$, $M\times X$ and $X\times X$, respectively.
In adapted local coordinates, where $X=\{x^\prime,0\}$ and $TX=\{x^\prime,0,\xi^\prime,0\}$ and the metric takes the standard form on $\RR^d$, we can then write for the kernel of $A_\bullet$:
\begin{align*}
K_{A_{\Psi}}(x,y)&=(2\pi)^{-n}\int e^{i\xi(x-y)} \chi(x,x-y) a(x,\xi)\dd\xi,\\
K_{A_g}(x,y)&=(2\pi)^{-d-n}\int e^{i\xi^\prime(x^\prime-y^\prime)+i\xi^{\prime\prime}x^{\prime\prime}-i\eta^{\prime\prime} y^{\prime\prime}} \chi(x-y) a(x^\prime,\xi^\prime,\xi^{\prime\prime},\eta^{\prime\prime})\dd\xi^\prime\dd\xi^{\prime\prime}\dd \eta^{\prime\prime},\\
K_{A_b}(x^\prime,y)&=(2\pi)^{-n}\int e^{i\xi^\prime(x^\prime-y^\prime)-i\xi^{\prime\prime}y^{\prime\prime}} \chi(x^\prime,x^\prime-y^\prime) a(x^\prime,\xi)\dd\xi,\\
K_{A_c}(x,y^\prime)&=(2\pi)^{-n}\int e^{i\xi^\prime(x^\prime-y^\prime)+i\eta^{\prime\prime}x^{\prime\prime}} \chi(x,x^\prime-y^\prime) a(x^\prime,\xi)\dd\xi^\prime\dd\eta^{\prime\prime},\\
K_{A_{\partial}}(x^\prime,y^\prime)&=(2\pi)^{-d}\int e^{i\xi^\prime(x^\prime-y^\prime)} \chi(x^\prime,x^\prime-y^\prime) a(x^\prime,\xi^\prime)\dd\xi^\prime.\\
\end{align*}

The Operators associated to composable Lagrangians as in Table \ref{tab:Lagrangians} admit compositions when their symbols are compactly supported or rapidly decaying in all variables. For example, an operator $A_c$ composed with some $A_\partial$ yields an operator of type $A_g$. The symbol of the compositions can be computed again, modulo remainders of lower order, in terms of so called twisted products. We do not list these explicit integral formulae for these, which are contained in \cite{SS}, since we will carry out the compositions in greater generality in Section \ref{section:SymbolCalc}. However, we note that since the compositions are merely clean, a rapid decay condition needs to be put on certain subsets of variables in the symbols such that the products exist. \\

\subsection{Adapted symbol spaces}

We now introduce the symbol spaces adapted to the quantization of the previous section. As in \cite{NS} we search for smooth functions on the respective Lagrangians that in the adapted coordinates of \eqref{eq:lagcoord} satisfy the estimates:

\begin{equation}
\label{eq:lagcoord2}
\begin{aligned}
S^m(\Lambda_\Psi)&:\quad |\partial_x^\alpha\partial_\xi^\beta a(x,\xi)|\lesssim (1+|\xi|)^{m-|\beta|}\\
S^{k,l,m}(\Lambda_g)&:\quad |\partial_{x^\prime}^\alpha \partial_{\xi^\prime}^\beta \partial_{\xi^{\prime\prime}}^\gamma\partial_{\eta^{\prime\prime}}^\delta a(x^\prime,\xi^\prime,\xi^{\prime\prime},\eta^{\prime\prime})|\\
&\qquad\qquad\lesssim (1+|\xi^\prime|)^{k-|\beta|}(1+|\xi^\prime|+|\xi^{\prime\prime}|)^{m-|\gamma|}(1+|\xi^\prime|+|\eta^{\prime\prime}|)^{l-|\gamma|}\\
S^{k,l}(\Lambda_b)&:\quad |\partial_{x^\prime}^\alpha \partial_{\xi^\prime}^\beta \partial_{\eta^{\prime\prime}}^\gamma a(x^\prime,\xi^\prime,\eta^{\prime\prime})|\lesssim (1+|\xi^\prime|)^{k-|\beta|}(1+|\xi^\prime|+|\eta^{\prime\prime}|)^{l-|\gamma|}\\
S^{k,m}(\Lambda_c)&:\quad |\partial_{x^\prime}^\alpha \partial_{\xi^\prime}^\beta \partial_{\xi^{\prime\prime}}^\gamma a(x^\prime,\xi^\prime,\xi^{\prime\prime})|\lesssim (1+|\xi^\prime|)^{k-|\beta|}(1+|\xi^\prime|+|\xi^{\prime\prime}|)^{m-|\gamma|}\\
S^m(\Lambda_\partial)&:\quad |\partial_{x^\prime}^\alpha\partial_{\xi^\prime}^\beta a(x^\prime,\xi^\prime)|\lesssim (1+|\xi^\prime|)^{m-|\beta|}
\end{aligned}
\end{equation}

For the calculus, we also need a meaningful definition of a principal term of these amplitudes and an invariant definition. Both can be achieved by introducing the polyhomogeneous versions of these symbol spaces as pullbacks of spaces of smooth functions on certain blow-up spaces, which we sketch in Appendix \ref{sec:symbinv}.

In order to obtain a meaningful calculus of operators, these are then realized as bounded operators between appropriate $L^2$-based Sobolev spaces, see \cite{NS}. For our purposes, it is enough to consider the $L^2$-bounded case.\\

\begin{Prop}
Let $k_g > 0, \ k_c > 0, \ k_b > 0, \ m_g < -\frac{\nu}{2}, \ m_g + k_g > -\frac{\nu}{2}$. Then the matrix of operators
\begin{equation}
A=\left(\begin{array}{cc}
A_{\Psi}+A_g & A_c \\ 
A_b & A_{\partial}
\end{array}\right) 
\end{equation}
defines a continuous operator on $L^2(M)\oplus L^2(X)$.
\end{Prop}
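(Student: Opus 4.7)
Since $L^2$-boundedness is preserved under finite sums, it suffices to prove boundedness separately for each of the five operator types on the appropriate $L^2$ spaces. The diagonal pieces $A_{\Psi}$ on $L^2(M)$ and $A_{\partial}$ on $L^2(X)$ are ordinary classical pseudodifferential operators of order at most zero on the closed manifolds $M$ and $X$, and are therefore bounded by the Calder\'on--Vaillancourt theorem.

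For the trace operator $A_b : L^2(M) \to L^2(X)$ and the coboundary operator $A_c : L^2(X) \to L^2(M)$ I would use a $TT^{\ast}$-type argument. By Table \ref{tab:Lagrangians}, the composition $\Lambda_c \circ {}^t\Lambda_c = \Lambda_\partial$ is clean, so $A_c^{\ast} A_c$ is a pseudodifferential operator on $X$. A direct symbol computation, obtained by integrating out the normal conormal variable $\eta''$ via Plancherel in the explicit kernel formula for $A_c$, shows that its order in the tangential covariable $\xi'$ is at most $-2 k_c$, modulo contributions of strictly lower order. The hypothesis $k_c > 0$ therefore forces $A_c^{\ast} A_c$ to be of negative order, hence bounded on $L^2(X)$, which gives $\|A_c\|^2 = \|A_c^{\ast} A_c\| < \infty$. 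The operator $A_b$ is handled symmetrically using $k_b > 0$.

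The singular Green operator $A_g : L^2(M) \to L^2(M)$ is the most delicate entry. I would exploit the structural factorization through $X$: modulo a term with smooth kernel arising from the cut-off $\chi$ (hence trivially $L^2$-bounded by Schur's lemma), the explicit kernel
$$K_{A_g}(x,y) = (2\pi)^{-d-n} \int e^{i\xi'(x'-y') + i\xi'' x'' - i\eta'' y''} \chi(x-y)\, a(x', \xi', \xi'', \eta'')\, d\xi'\, d\xi''\, d\eta''$$
can be read as \emph{coboundary} (the $\xi''$-integration against $x''$) $\circ$ \emph{tangential pseudodifferential operator on $X$} (the $\xi'$-integration) $\circ$ \emph{trace} (the $\eta''$-integration against $y''$). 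The hypothesis $m_g < -\nu/2$ together with the symmetric condition implicit in the $l_g$-estimates ensures that the symbol is $L^1$ in the two normal Fourier variables on $\mathbb{R}^\nu$, giving $L^2$-boundedness of the coboundary and trace factors as in the preceding paragraph. The hypothesis $m_g + k_g > -\nu/2$ then guarantees that the intermediate tangential symbol is of order at most zero on $X$, so the middle factor is $L^2(X)$-bounded by Calder\'on--Vaillancourt.

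The main technical obstacle is the careful book-keeping between the three indices $(k_g, l_g, m_g)$ of the symbol class $S^{k_g, l_g, m_g}(\Lambda_g)$ and their translation into Sobolev orders at each step of the factorization; in particular one must make the symmetric role of $l_g$ with respect to $m_g$ explicit, since only $m_g$ appears in the stated hypotheses. A secondary point is that the cut-off $\chi$ only localizes the Riemann--Weyl fibration to a neighbourhood of the zero section of $TM$, so a non-stationary-phase argument is required to dispose of the complementary kernel piece, which contributes only smoothing residuals.
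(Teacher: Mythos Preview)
Your approach is correct and essentially the same as the paper's. The paper defers the proof to its general groupoid setting (Theorem~\ref{Thm:algebra} and Lemma~\ref{Lem:L2}), where the argument is precisely: Calder\'on--Vaillancourt for the pseudodifferential corners, a $TT^{\ast}$ argument for the boundary/coboundary operators (the paper computes $BB^{\ast}$ as a zero-order pseudodifferential operator on the boundary, you compute $A_c^{\ast}A_c$; same idea), and a factorization of the singular Green operator through the off-diagonal pieces.

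The only structural difference is that the paper factors $A_g = C \cdot B$ directly as coboundary times boundary (using the surjectivity of the twisted product $S_{cl}^{m_1,k_1}(\Lambda_c) \times S_{cl}^{k_2,l_2}(\Lambda_b) \to S_{cl}^{m_1,k_1+k_2,l_2}(\Lambda_g)$), whereas you interpose an explicit tangential pseudodifferential factor. Your three-factor version is slightly more transparent about where each order hypothesis enters; the paper's two-factor version is cleaner but pushes that bookkeeping into the choice of split $k_1 + k_2 = k_g$. Either way the mechanism is identical.
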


We will give a proof of this fact in our more general setting in the main body of this paper, see Theorem \ref{Thm:algebra}. There is a way to define a global pseudodifferential calculus on manifolds with a Lie structure introduced by Ammann, Lauter, Nistor cf. \cite{ALN}. The definition of the calculus is based on the Kohn-Nirenberg quantization and essential features, in particular the closedness under composition, is proved using groupoid
techniques. Based on their approach the relative pseudodifferential calculus studied in Section \ref{section:KN} admits a natural generalization to embeddings of Lie manifolds. To properly define this however we will need a theory of Fourier integral operators on Lie groupoids, studied in Section \ref{FIO}. 

\section{Manifolds with corners}
\label{Section:3}

In this section we give a brief overview of the category of manifolds with corners. For a more detailed presentation we refer e.g. to \cite{Mel}. 
The topological structure of a manifold with corners is that of a space $M$ with a finite number of embedded (intersecting) codimension one hypersurfaces. 
As such $M$ is locally modelled around each point $x_0$ by relatively open subsets of $[-1,1]^k \times \Rr^{n-k}$, where the minimal $k = \mathrm{ord}(x_0)$ is the depth of $x_0$.
We require the transition maps to be smooth and obtain a smooth structure on $M$. A natural choice of structure preserving maps in this case are the \emph{$b$-maps} in \cite{Mel}. There are various other
inequivalent choices of structure preserving maps for manifolds with corners used in the literature. For an overview and more in depth discussion of such issues we refer to \cite{J}. In this way the manifolds with corners form a category, which we henceforth denote by $\Cinfty$, a subcategory of which are compact manifolds with boundary. 
In later sections we will study categories of groupoids, where the objects are groupoids which are \emph{internal to} the \emph{ambient category} $\Cinfty$. 
A compact manifold with corners can also be defined in an \emph{extrinsic} way with reference to an ambient manifold and boundary defining functions theron. 

\begin{Def}
A Hausdorff topological space $M$ is a \emph{manifold with embedded corners} if the following conditions hold.

\emph{i)} We have a topological isomorphism $i \colon M \to i(M) \subset \widetilde{M}$, where $\widetilde{M}$ is a compact manifold without boundary.
Then the smooth structure on $M$ is given by $C^{\infty}(M) = i^{\ast} C^{\infty}(\widetilde{M})$.

\emph{ii)} The boundary defining functions $\{\rho_i\}_{i \in I}$ are fixed as maps $\rho_i \in C^{\infty}(\widetilde{M}), \ i \in I$ with
\[
i(M) = \{y \in \widetilde{M} : \rho_i(y) \geq 0\} = \bigcap_{i \in I} \{\rho_i \geq 0\}. 
\]

\emph{iii)} For each $J \subset I, x \in \widetilde{M}$ with $\rho_j(x) = 0$ for each $j \in J$ it follows that $\{d \rho_j(x)\}_{j \in J}$ is linearly
independent.

\label{Def:MWC}
\end{Def}

We focus on the special case of Lie manifolds below. For manifolds with corners we consider a particular type of submersion.

\begin{Def}
A \emph{tame submersion} between two manifolds with corners $M$ and $N$ is given by a smooth map $f \colon M \to N$ such that $df$ is everywhere surjective and $v$ is an inward pointing tangent vector of $M$ if and only if $df(v)$ is an inward pointing tangent vector of $N$.
\label{Def:subm}
\end{Def}

\begin{Lem}
Let $f \colon M \to N$ be a tame submersion between manifolds with corners $M$ and $N$. Then
for each $y \in N$ the fibers $f^{-1}(y)$ are smooth manifolds without corners.
\label{Lem:subm}
\end{Lem}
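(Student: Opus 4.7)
The plan is to verify the statement pointwise: fix $x \in f^{-1}(y)$ and show that a neighborhood of $x$ in $f^{-1}(y)$ carries the structure of a smooth manifold without boundary. Choose adapted local corner charts $U \cong [0,\infty)^k \times \RR^{n-k}$ near $x$ (sending $x$ to the origin) and $V \cong [0,\infty)^l \times \RR^{m-l}$ near $y$, where $k = \mathrm{ord}(x)$, $l = \mathrm{ord}(y)$, $n = \dim M$, and $m = \dim N$. In these coordinates decompose $f = (f_1,\ldots,f_m)$, with the first $l$ components being pullbacks of boundary defining functions of $N$.

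The key observation drawn directly from the tameness condition is the inclusion $\ker(df_x) \subseteq H_x$, where $H_x \subseteq T_xM$ denotes the joint kernel of all boundary defining covectors at $x$, equivalently the tangent space to the minimal open stratum through $x$. Indeed, for $v \in \ker(df_x)$ both $df(v)$ and $df(-v)$ equal the zero vector at $y$, which is inward pointing; tameness then makes $\pm v$ inward at $x$, forcing $v \in H_x$. The same philosophy applied to non-kernel directions, combined with $f_j \geq 0$ and $f_j(x) = 0$ for $j \leq l$, shows that the $l \times k$ boundary Jacobian $A = (\partial f_j/\partial x_i)_{j \leq l,\, i \leq k}$ has non-negative entries and that the preimage $A^{-1}(\RR^l_+)$ equals $\RR^k_+$. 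A short linear-algebra argument --- its kernel must be contained in $\RR^k_+ \cap (-\RR^k_+) = \{0\}$, forcing $A$ injective and hence $k = l$, after which non-negativity of both $A$ and $A^{-1}$ makes $A$ a permutation times a positive diagonal matrix --- produces a normal form.

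Absorbing the positive diagonal factors and permuting coordinates on $V$, one can arrange that $f$ takes the projection-like form $f(x_1,\ldots,x_k, z_1,\ldots, z_{n-k}) = (x_1,\ldots,x_k, h(x,z))$ with $h$ smooth and $d_z h(0,0) \colon \RR^{n-k} \to \RR^{m-k}$ surjective. The fiber $f^{-1}(y)$ near $x$ is then cut out by $x_1 = \cdots = x_k = 0$ together with $h(0,z) = (y_{k+1},\ldots,y_m)$, and so sits entirely inside the corner-free open stratum $\{0\} \times \RR^{n-k}$. The ordinary implicit function theorem applied in the interior $z$-variables exhibits it as a smooth submanifold of dimension $n - m$ without boundary. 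Since $x$ was arbitrary, $f^{-1}(y)$ is a smooth manifold without corners.

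The main obstacle is the normal-form step: extracting from tameness that the depths coincide ($k = l$) and that the boundary-to-boundary Jacobian is a generalized permutation matrix. Once this is established, the inclusion $\ker(df_x) \subseteq H_x$ becomes a coordinate statement and the remainder is a standard implicit function theorem argument in the interior variables.
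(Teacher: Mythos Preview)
Your argument is sound and supplies a self-contained proof where the paper merely cites \cite{LN}, p.~4. There is, however, one genuine omission in the linear-algebra step: injectivity of the $l\times k$ boundary Jacobian $A$ only yields $k\le l$, not $k=l$. The condition $A^{-1}(\RR^l_+)=\RR^k_+$ by itself does not force equality of depths; for instance $A=(1,1)^T$ with $k=1$, $l=2$ satisfies it. The missing ingredient is the submersion hypothesis: tameness first forces the upper-right block $D=\bigl(\partial f_j/\partial z_i\bigr)_{j\le l}$ of $df_x$ to vanish (take the corner components $v_{1:k}=0$, which is inward, and let the interior components vary freely; the image must stay in $\RR^l_+$, hence $D=0$), and then surjectivity of $df_x$ with $D=0$ forces $A$ to be onto, giving $k\ge l$. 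You are implicitly using $D=0$ already when you pass from the tameness condition on $df_x$ to the statement $A^{-1}(\RR^l_+)=\RR^k_+$, so it is worth making this explicit. Once $k=l$ and the invertibility of $A$ are in hand, the remainder --- $A$ a monomial matrix, the coordinate straightening to projection form, and the implicit function theorem in the interior variables --- goes through as you wrote it.
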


\begin{proof}
See \cite{LN}, p.4. 
\end{proof}

For a given compact manifold with corners $M$ we fix the notation $\F(M)$ to denote the collection of boundary faces of $M$.

\begin{Def}
Let $M_i, \ i =1,2$ be compact manifolds with corners and $j \colon M_1 \hookrightarrow M_2$ a $C^{\infty}$-embedding. 

\emph{i)} The embedding is called \emph{closed} if for each boundary face $F$ of $M_1$ there is a boundary face $G$ of $M_2$ such that $F$ is the connected component of
$j^{-1}(G)$.

\emph{ii)} The embedding $j \colon M_1 \hookrightarrow M_2$ is \emph{transverse relative to $M_2$} if for any boundary face $F$ of $M_2$
and any $y \in F \cap j(M_1)$ we have that the space $T_y M$ is the non-direct sum of $T_y j(M_1)$ and $T_y F$. 

\emph{iii)} Denote by $\varphi(j) \colon \F(M_2) \to \F(M_1)$ the map of boundary faces given by $\F(M_2) \ni F \mapsto M_1 \cap F \in \F(M_1)$. 
The embedding $j \colon M_1 \hookrightarrow M_2$ is called \emph{admissible} if $\varphi(j)$ is bijective. 
\label{Def:transv}
\end{Def}

\begin{Exa}
A typical situation in which such an embedding arises is if one takes a submanifold of $\RRd$ that ``extends up to infinity''. If one then compactifies $\RRd$, the submanifold will hit the new-formed boundary at infinity. Figure \ref{fig:embex} is an example of a transverse relative embedding of compact manifolds with non-trivial corners. Here, $M$ is the ``torus with corners'' $\mathbb{S}^1\times Y$

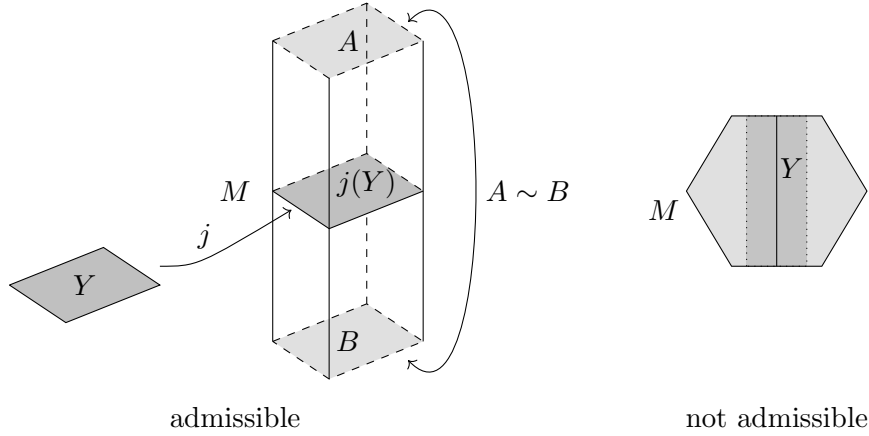
\begin{figure}[H]
\begin{center}
\begin{tikzpicture}
    \begin{scope}[shift={(-3.5,-1.25)}]	
    \fill[opacity=0.25] (0,0) -- (1.25,0.5)  -- (2,0) -- (0.75,-0.5) -- (0,0);
	\draw (0,0) -- (1.25,0.5)  -- (2,0) -- (0.75,-0.5) -- (0,0);
	\node  at (1,0) {$Y$};
	\end{scope}
	\node  at (-0.5,-3) {admissible};
	\draw[->] (-1.5,-1) .. controls (-1,-1) and (-1,-1) .. (0.25,-0.25) node [above, midway] {$j$}; 
    \fill[opacity=0.25] (0,0) -- (1.25,0.5)  -- (2,0) -- (0.75,-0.5) -- (0,0);
	\draw[dashed] (0,0) -- (1.25,0.5)  -- (2,0);
    \draw (2,0) -- (0.75,-0.5) -- (0,0);
    \node  at (-0.5,0) {$M$};
    \node  at (1.25,0.1) {$j(Y)$};
    \fill[opacity=0.125] (0,-2) -- (1.25,-1.5)  -- (2,-2) -- (0.75,-2.5) -- (0,-2);
    \fill[opacity=0.125] (0,2) -- (1.25,2.5)  -- (2,2) -- (0.75,1.5) -- (0,2);
    \draw[dashed] (0,-2) -- (1.25,-1.5)  -- (2,-2) -- (0.75,-2.5) -- (0,-2);
    \draw[dashed] (0,2) -- (1.25,2.5)  -- (2,2) -- (0.75,1.5) -- (0,2);
    \node  at (1,2) {$A$};
    \node  at (1,-2) {$B$};
    \draw[<->] (1.8,2.25) .. controls (3,3.5) and (3,-3.5) .. (1.8,-2.25) node [right, midway] {$A\sim B$}; 
    \draw (0,-2) -- (0,2);
    \draw (2,-2) -- (2,2);
    \draw (0.75,-2.5) -- (0.75,1.5);
    \draw[dashed] (1.25,2.5) -- (1.25,0.5);
    \draw[dashed] (1.25,-0.35) -- (1.25,-1.5);
    
    \begin{scope}[shift={(5.5,0)}]
        \node  at (1.4,0.3) {$Y$};
        \node  at (-0.3,-0.25) {$M$};
\draw (0,0) -- (0.6,1) -- (1.8,1) -- (2.4,0) -- (1.8,-1) -- (0.6,-1) -- (0,0);
\fill[opacity=0.125] (0,0) -- (0.6,1) -- (1.8,1) -- (2.4,0) -- (1.8,-1) -- (0.6,-1) -- (0,0);
\draw (1.2,-1) -- (1.2,1);
\draw[dotted] (0.8,-1) -- (0.8,1) -- (1.6,1) -- (1.6,-1) -- (0.8,-1);
\fill[opacity=0.125] (0.8,-1) -- (0.8,1) -- (1.6,1) -- (1.6,-1) -- (0.8,-1);    
\node  at (1.2,-3) {not admissible};
    \end{scope}
    
\end{tikzpicture}
\end{center}
\caption{Examples of transverse relative embeddings}
\label{fig:embex}
\end{figure}
\label{Exa:fig}
\end{Exa}

A frequently considered subclass of manifolds with corners is that of Lie manifolds, where the geometry at the boundary is encoded in a Lie algebra of vector fields.
We give several detailed examples of Lie manifolds at a later stage, see Section \ref{sec:lie}. The following definition of Lie manifolds is based on work by Ammann, Ionescu, Lauter and Nistor \cite{ALN}, \cite{AIN}. 

\begin{Def}
A Lie manifold $(M, \V)$ is a compact manifold with corners $M$ endowed with a so-called Lie structure of vector fields $\V \subset \Gamma(TM)$ which is assumed to be closed with regard to the Lie bracket of the set of smooth vector fields.

We call $\V$ \emph{Lie structure} if $\V$ is a finitely generated $C^{\infty}(M)$ module and all the vector fields in $\V$ are tangent to the boundary strata of $M$. 
\label{Def:Liemf}
\end{Def}

\begin{Rem}
Let $(M, \V)$ be a Lie manifold. Associated with $\V$ we find a vector bundle $\A \to M$ such that $\Gamma(\A) \cong \V$ by the Serre Swan theorem. We make the standing assumption that $\A_{|M_0}$ is the tangent bundle on the interior $TM_0$. This bundle $\A$ is canonically endowed with the structure of a Lie algebroid and under the previous assumption this Lie algebroid will be integrable (cf. \cite{ALN}). We will recall these notions of Lie groupoid and Lie algebroid in more detail in the following sections.
\label{Rem:Liemf}
\end{Rem}

We examine the class of embeddings compatible with this additional geometric structure.

\begin{Def}
An embedding of Lie manifolds $j \colon (Y, \B, \W) \hookrightarrow (M, \V, \A)$ is a transverse embedding of manifolds with corners such that inclusion $\B \hookrightarrow \A_{|Y}$ is a Lie subalgebroid, cf. \cite{Mc}. \\
We denote by $\EmbV$ the category of Lie manifolds with admissible embeddings as arrows. 
\label{Def:Liemf2}
\end{Def}

As a direct consequence of the previous discussion we obtain the following result.

\begin{Prop}
Let $j \colon Y \hookrightarrow M$ be an embedding of Lie manifolds. 

\begin{itemize}
\item[\emph{i)}] There is a tubular neighborhood
$Y \subset \U \subset M$ such that the embedding $\tilde{j} \colon Y \hookrightarrow \U$ is admissible. 

\item[\emph{ii)}] There is a direct sum decomposition of $\A_{|Y}$ into $\B \oplus \N$, where $\N := \frac{\A_{|Y}}{\B}$ is the $\A$-normal bundle and $\B \to Y$ is the Lie algebroid of the Lie-submanifold $(Y, \W)$, i.e. $\Gamma(\B) \cong \W$. 

\item[\emph{iii)}] The Lie structure $\W = \Gamma(\B)$ of vector fields of the Lie submanifold $Y$ takes the form
\[
\W = \{V_{|Y} : V \in \V, \ V_{|Y} \ \text{tangent to} \ Y\}. 
\]
\end{itemize}

\label{Prop:adm}
\end{Prop}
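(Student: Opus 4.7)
My plan is to treat the three parts in order, leveraging a tubular neighborhood construction compatible with the Lie structure, and then observing that (ii) and (iii) are essentially corollaries of (i) together with standard bundle-theoretic facts.

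For (i), I would invoke the tubular neighborhood theorem in the category of manifolds with corners: fix a metric on $\A$ compatible with the Lie structure, form the $\A$-normal bundle $\N := \A_{|Y}/\B$, and use the Lie-manifold exponential map associated to the integrating Lie groupoid (equivalently, the exponential of a compatible Riemannian metric on $M$ restricted to $\N \hookrightarrow \A_{|Y} \hookrightarrow \A$). This gives a diffeomorphism from an open neighborhood of the zero section of $\N$ onto a neighborhood $\U$ of $j(Y) \subset M$. The transversality assumption of Definition \ref{Def:Liemf2} together with the characterization of the boundary faces of $M$ via boundary defining functions $\rho_i$ from Definition \ref{Def:MWC} ensures that, after shrinking $\U$, each boundary face of $\U$ is precisely a tube over a boundary face of $Y$. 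The map $\varphi(\tilde{j}) \colon \F(\U) \to \F(Y)$, $F \mapsto F \cap Y$, is therefore bijective, which is the definition of admissibility.

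For (ii), the inclusion $\B \hookrightarrow \A_{|Y}$ is by hypothesis a Lie subalgebroid of constant rank. Choose a fiber metric on $\A_{|Y}$ and define $\N \subset \A_{|Y}$ as the orthogonal complement of $\B$; this gives the splitting of the short exact sequence
\begin{equation*}
0 \longrightarrow \B \longrightarrow \A_{|Y} \longrightarrow \A_{|Y}/\B \longrightarrow 0
\end{equation*}
and identifies $\N$ with the $\A$-normal bundle $\A_{|Y}/\B$. No further input beyond the standard splitting lemma for vector bundles is needed.

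For (iii), the containment $\W \supseteq \{V_{|Y} : V \in \V,\ V_{|Y}\text{ tangent to } Y\}$ is almost immediate: any such $V$ is tangent to all boundary strata of $M$ by definition of $\V$, and by transversality of $j$ together with admissibility, its restriction to $Y$ is then tangent to the boundary strata of $Y$, hence a section of $\B$. For the reverse inclusion I would extend a given $W \in \W = \Gamma(\B)$ to a section of $\A$ on all of $M$ as follows: using the tubular neighborhood $\U \cong \N$ from (i) and the splitting from (ii), first extend $W$ to a section over $\U$ by taking it to be constant along the normal fibers (pullback via the bundle projection $\U \to Y$ composed with the inclusion $\B \hookrightarrow \A$). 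Then cut off with a smooth function supported in $\U$ and equal to $1$ in a smaller neighborhood, and patch by zero on $M \setminus \U$.

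The main subtlety, and the step I would expect to require the most care, is verifying that this extension lies in $\V$ rather than merely in $\Gamma(\A)$, i.e.\ that it is tangent to every boundary stratum of $M$ (not just of $Y$). This is where admissibility from (i) and the bijection $\varphi(\tilde{j})$ pay off: because each face of $\U$ corresponds to a unique face of $Y$ under intersection, a section of $\B$ that is tangent to the faces of $Y$ pulls back along the normal projection to a section tangent to the faces of $\U$, and hence (after cutoff) to a global section in $\V$. This closes the second inclusion and completes the proof.
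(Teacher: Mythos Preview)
Your argument for parts (i) and (ii) matches the paper's proof essentially line for line: the paper cites the tubular neighborhood theorem from \cite{AIN} for (i) and splits the short exact sequence $\B \rightarrowtail \A_{|Y} \twoheadrightarrow \N$ via a compatible metric for (ii), exactly as you do.

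For part (iii) your approach is correct in outline but contains a conceptual confusion that makes it heavier than necessary. You write that the main subtlety is ``verifying that this extension lies in $\V$ rather than merely in $\Gamma(\A)$,'' but by the very definition of the Lie algebroid associated to a Lie structure (Remark \ref{Rem:Liemf}), one has $\Gamma(\A) \cong \V$; the tangency to boundary strata is already encoded in being a section of $\A$. So the ``subtlety'' you identify is vacuous, and the last paragraph of your proposal is doing no work. The paper's argument is accordingly much shorter: it simply invokes the anchor characterization of a Lie subalgebroid, namely $\W = \{V \in \Gamma(\A_{|Y}) : \varrho(V) \in \Gamma(TY)\}$, which immediately gives the claimed description once one notes (as you do, and as the paper leaves implicit) that any section of $\A_{|Y}$ extends to a global section of $\A$.
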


\begin{proof}
\emph{i)} This follows from the tubular neighborhood theorem given in \cite[Theorem 2.7]{AIN}.  

\emph{ii)} We have a short exact sequence of vector bundles
\[
\xymatrix{
\B \ar@{>->}[r] & \A_{|Y} \ar@{->>}[r]^q & \N. 
}
\]

This sequence splits and we can choose a splitting $\eta \colon \N \to \A_{|Y}$ by fixing a compatible Riemannian metric $g$ on $\A$. 

\emph{iii)} This follows by the condition that $\B$ is a Lie subalgebroid of $\A_{|Y}$. Denote by $\varrho$ the anchor of $\A$, then $\W = \{V \in \Gamma(\A_{|Y}) : \varrho(V) \in \Gamma(TY)\}$. 
\end{proof}

\begin{Rem}
The assumption of admissibility that we make throughout this document seems very restrictive at first. We mention however that in order to obtain a calculus for a relative transversal \emph{non-admissible} embedding it suffices to reduce ones attention to a tubular neighborhood $\tilde{M}$ of $j(Y)$ as illustrated in Figure \ref{fig:embex} where the embedding is admissible and then glue the resulting calculus with the corresponding pseudodifferential calculus on $M \setminus \tilde{M}$. This construction is carried out in \cite{B3}.
\label{Rem:adm}
\end{Rem}

\section{Non-Commutative completion}

\label{nccompl}

To motivate the definitions in this section we briefly return to the case where the base geometry has no singularity.
Let $M$ be a compact manifold (without corners) and $j \colon Y \hookrightarrow M$ an embedding.
Recall from section \ref{section:KN} that a microlocalization of $j$ is given by a calculus consisting of operators
of the form
\[
\begin{pmatrix} A_{\Psi} + A_{g} & A_c \\
A_b & A_{\partial} \end{pmatrix}.
\]

We restrict our attention to those operators of multiorder zero of the form
\[
\begin{pmatrix} A_g & A_c \\
A_b & A_{\partial} \end{pmatrix} : \begin{array}{c}
L^2(M) \\ 
\oplus\\
L^2(Y)
\end{array} \rightarrow \begin{array}{c}
L^2(M) \\ 
\oplus\\
L^2(Y)
\end{array}.
\]
We want to compare the class of all $A_g$ with the class of all $A_{\partial}$. The class of operators $\{A_g\}$ is an algebra (of singular Green operators), which we denote by $\U$. 
We also denote by $\Psi_{\partial}$ the class of operators $\{A_{\partial}\}$ which are pseudodifferential operators on $Y$.
The classes $\{A_b\}, \ \{A_c\}$ form bimodules $\mathcal{E}$ and $\mathcal{E}^{\ast}$, respectively, which implement a so-called \emph{bimodule correspondence} between $\U$ and $\Psi_{\partial}$. 
We can complete the operator classes $\U, \mathcal{E}, \mathcal{E}^{\ast}$ and $\Psi_{\partial}$ with regard to the relative $L^2$-based operator norms.
Then we obtain $C^{\ast}$-algebras $\overline{\U}$ and $\overline{\Psi}_{\partial}$.
It can be checked that the completed bimodules $\overline{\mathcal{E}}$ and $\overline{\mathcal{E}}^{\ast}$ are $C^{\ast}$-bimodules which implement
a \emph{Morita equivalence} of $C^{\ast}$-algebras $\overline{\Psi}_{\partial} \dashrightarrow \overline{\U}$. 
A bimodule correspondence / Morita equivalence of $C^{\ast}$-algebras yields an arrow / isomorphism in a suitable bimodule category of $C^{\ast}$-algebras.
We define this category $C_b^{\ast}$ below (the notation subscript $b$ being suggestive for \emph{bimodule}). \\
We now return to the case where the base geometry of the manifold may possess singularities. Our aim in this section is to give an abstract description of a microlocalization of an embedding of possibly singular manifolds.
As a preparation we introduce appropriate categories of Lie algebroids, Lie groupoids, symplectic groupoids and $C^{\ast}$-algebras. 
The main notion we need are the arrows in these categories which are implemented by a \emph{correspondence} and in particular the isomorphism classes of correspondences which we refer to as \emph{generalized morphisms}. 

\subsection{The category $\LG_b$}
\label{sec:groupoid}

We recall the definition of the category of Lie groupoids where the correct notion of morphism is a so-called generalized morphism. In fact the arrows in the category will consist of \emph{isomorphism classes} of bibundle correspondences, cf. \cite{L}. The Lie groupoids here are always assumed to be internal to the category of smooth manifolds, possibly with corners. We assume some familarity on the part of the reader with the theory of Lie groupoids and otherwise refer to \cite{Mc} for a more exhaustive presentation of the theory. 
We denote by $\G \rightrightarrows \Gop$ a Lie groupoid with space of objects which are also called the \emph{units} $\Gop$ and space of morphisms $\G$. Additionally, we fix the structural maps $r,s$ for the range and source maps, respectively, $u$ for the unit inclusion and $i$ the inversion.
The composable arrows are denoted by $\Gpull := \{(\gamma, \eta) \in \G \times \G : r(\gamma) = s(\eta)\}$. 
We denote by $m \colon \Gpull \to \G$ the multiplication in the Lie groupoid. As is costumary we denote by $\G_x = s^{-1}(x)$ the $s$-fibers, by $\G^x = r^{-1}(x)$ the $r$-fibers and by $\G_x^x$ the isotropy
group in $x \in \Gop$. The Lie algebroid $\A(\G) \to \Gop$ is the restriction of $\ker dr$ to the unit space $\Gop$ and equivalently defined as the normal bundle $N^{\G} \Gop$
to the unit inclusion $u \colon \Gop \hookrightarrow \G$. 
We give a brief summary of the categories of Lie groupoids and $C^{\ast}$-algebras under consideration in this work. This discussion is for the most part based on \cite{L}. See also the references contained therein. 
We also define so-called embeddings of Lie groupoids and what is meant by a non-commutative completion of an embedding. 

\begin{Def}
Let $\G \rightrightarrows \Gop$ be a Lie groupoid. A $C^{\infty}$-manifold $Z$ is called a \emph{right $\G$-space} if it carries a right $\G$-action in the following sense: there is a smooth map
$q \colon Z \to \Gop$ which is called \emph{charge map} and a smooth map $\alpha \colon Z \ast_r \G = \{(z, \gamma) : q(z) = r(\gamma)\} \to Z, (z, \gamma) \mapsto \alpha(z, \gamma) = z \cdot \gamma \in Z$
such that the following conditions hold

\hspace{0.5cm}\emph{i)} $q(z \cdot \gamma) = s(\gamma), \ (z, \gamma) \in Z \ast_r \G$. 

\hspace{0.5cm}\emph{ii)} $z \cdot (\gamma \cdot \eta) = (z \cdot \gamma) \cdot \eta$ for $(z, \gamma) \in Z \ast_r \G, \ (\gamma, \eta) \in \Gpull$, 

\hspace{0.5cm}\emph{iii)} $z \cdot \id_{q(z)} = z$,

A right $\G$-space $(Z, \alpha, q)$ is called \emph{$\G$-fibered} if $q$ is a surjective submersion. 

The right action is \emph{free} if $\forall_{z \in Z} \ z \cdot \gamma = z \Rightarrow \gamma = \id_{q(z)}$
and \emph{proper} if the map $Z \ast_r \G \to Z \times Z, \ (z, \gamma) \mapsto (z, z \cdot \gamma)$ is proper. 

A free and proper action is called \emph{principal}. 

A \emph{left $\G$-space} is a right $\G^{op}$-space where $\G^{op}$ denotes the opposite category of $\G$. 

\label{Def:act}
\end{Def}

\begin{Def}
Let $\G \rightrightarrows \Gop$ and $\H \rightrightarrows \Hop$ be Lie groupoids. A \emph{bibundle correspondence} from $\H$ to $\G$ 
is a triple $(Z, p, q)$ where $(Z, \alpha, p)$ is a left $\H$-space and $(Z, \beta, q)$ is a right $\G$-space such that 

\hspace{0.5cm}\emph{i)} the right action is principal and $Z$ is $\G$-fibered, 

\hspace{0.5cm}\emph{ii)} the map $p$ induces a diffeomorphism $Z / \G \iso \Hop$, 

\hspace{0.5cm}\emph{iii)} the actions of $\H  \ \rotatebox[origin=c]{-90}{$\circlearrowleft$}\ Z$ and $Z \ \rotatebox[origin=c]{90}{$\circlearrowright$}\ \G$ commute. 

An \emph{equivalence bibundle correspondence} of $\H$ and $\G$ (also called \emph{Morita equivalence} and denoted by $\H \simM \G$) is a triple $(Z, p, q)$ 
which is a bibundle correspondence from $\H$ to $\G$ such that

\hspace{0.5cm}\emph{iv)} the left action is principal and $Z$ is $\H$-fibered,

\hspace{0.5cm}\emph{v)} $q$ induces a diffeomorphism $\H / Z \iso \Gop$. 

\label{Def:morph}
\end{Def}

We now discuss how to compose morphisms in our category, hence we recall the definition of the \emph{generalized tensor product}. 
Let us fix two bibundle correspondence

\[
\begin{tikzcd}[every label/.append style={swap}]
\H \ar[d, shift left] \ar[d] \ar[symbol=\circlearrowleft]{r} & \ar{dl}{p_1} Z_1 \ar{dr}{q_1} & \ar[symbol=\circlearrowright]{l} \G \ar[d, shift left] \ar[d] \\
\Hop & & \Gop
\end{tikzcd}
\]

and
\[
\begin{tikzcd}[every label/.append style={swap}]
\G \ar[d, shift left] \ar[d] \ar[symbol=\circlearrowleft]{r} & \ar{dl}{p_2} Z_2 \ar{dr}{q_2} & \ar[symbol=\circlearrowright]{l} \K \ar[d, shift left] \ar[d] \\
\Gop & & \Kop
\end{tikzcd}
\]

Endow the fiber-product $Z_1 \times_{\Gop} Z_2$ with the right $\H$-action $\gamma \colon (z_1, z_2) \mapsto (z_1 \gamma, \gamma^{-1} z_2)$ and set 
$Z_1 \circledast Z_2 = (Z_1 \times_{\G} Z_2) / \G$, the orbit space of the action. 
We obtain a generalized morphism
\[
\begin{tikzcd}[every label/.append style={swap}]
\H \ar[d, shift left] \ar[d] \ar[symbol=\circlearrowleft]{r} & \ar{dl}{\tilde{p}} Z_1 \circledast Z_2 \ar{dr}{\tilde{q}} & \ar[symbol=\circlearrowright]{l} \K \ar[d, shift left] \ar[d] \\
\Hop & & \Kop
\end{tikzcd}
\]

with left-action $\eta [z_1, z_2]_{\G} = [\eta z_1, z_2]_{\G}$ and right-action $[z_1, z_2]_{\G} \kappa = [z_1, z_2 \kappa]_{\G}$. 
The charge maps are given by $\tilde{p}([z_1, z_2]) = p_1(z_1), \ \tilde{q}([z_1, z_2]_{\G}) = q_2(z_2)$. 

This composition of bibundle correspondences has a left and right unit, namely the trivial generalized morphism $\G  \ \rotatebox[origin=c]{-90}{$\circlearrowleft$}\ \G \ \rotatebox[origin=c]{90}{$\circlearrowright$}\ \G$. 

\begin{Def}
The category $\LG_b$ consists of Lie groupoids as objects with isomorphism classes of bibundle correspondences as arrows between objects and the generalized tensor product $\circledast$ as composition of arrows. 
We refer to isomorphism class of bibundle correspondences as \emph{generalized morphisms} of Lie groupoids. 
\label{Def:LGb}
\end{Def}

The isomorphisms in this category are precisely given by Morita equivalences of Lie groupoids. 

\begin{Prop}
A bibundle correspondence $\G  \ \rotatebox[origin=c]{-90}{$\circlearrowleft$}\ Z \ \rotatebox[origin=c]{90}{$\circlearrowright$}\ \H$ is a Morita equivalence if and only if the class $[Z]$ 
is invertible as an arrow in $\LG_b$. 
\label{Prop:Morita}
\end{Prop}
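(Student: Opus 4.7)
The plan is to prove both directions of the biconditional using the explicit description of the generalized tensor product $\circledast$ from Subsection~\ref{sec:groupoid}. The forward direction is constructive: from a Morita equivalence $Z$ I will build an explicit two-sided inverse in $\LG_b$. The backward direction is more delicate and amounts to transporting principality and quotient identifications back along the isomorphisms witnessing invertibility of $[Z]$.

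For the forward direction, suppose $(Z, p, q)$ is a Morita equivalence $\H \simM \G$. Define the \emph{opposite bibundle} $\bar Z$ with the same underlying manifold $Z$ but with the roles of the two actions exchanged: the left $\G$-action is $\gamma \cdot z := z \cdot \gamma^{-1}$, the right $\H$-action is $z \cdot \eta := \eta^{-1} \cdot z$, and the charge maps $q$ and $p$ now play the roles of left and right charge, respectively. Since conditions (i)--(v) of Definition~\ref{Def:morph} are manifestly symmetric in the roles of $\H$ and $\G$, the triple $(\bar Z, q, p)$ is itself a Morita equivalence $\G \simM \H$. I then show that $Z \circledast \bar Z$ is isomorphic to the trivial bibundle on $\H$. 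By conditions (iv) and (v) of Definition~\ref{Def:morph}, the left $\H$-action on $Z$ is free and transitive on $q$-fibres, so for every $(z_1, z_2) \in Z \times_{\Gop} Z$ there exists a unique $\eta \in \H$ with $\eta \cdot z_1 = z_2$. The assignment $(z_1, z_2) \mapsto \eta$ is invariant under the diagonal $\G$-action used to form $\circledast$, and therefore descends to a smooth, $(\H, \H)$-bi-equivariant bijection $Z \circledast \bar Z \to \H$. Principality of the actions promotes this to a diffeomorphism. Symmetrically, $\bar Z \circledast Z \cong \G$, so $[\bar Z]$ is a two-sided inverse of $[Z]$ in $\LG_b$.

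For the backward direction, assume $[Z]$ admits an inverse $[W]$, witnessed by $(\H, \H)$- and $(\G, \G)$-bi-equivariant diffeomorphisms $\varphi \colon Z \circledast W \iso \H$ and $\psi \colon W \circledast Z \iso \G$. I must establish axioms (iv) and (v) of Definition~\ref{Def:morph} for $Z$, namely that the left $\H$-action is principal and that the charge map $q$ identifies its orbit space with $\Gop$. For freeness, suppose $\eta \cdot z = z$ for some $\eta \in \H$, $z \in Z$; choose $w \in W$ with $p_W(w) = q(z)$, available by the $\H$-fibredness of $W$, and observe that inside $Z \circledast W$ one has $\eta \cdot [z, w]_{\G} = [\eta z, w]_{\G} = [z, w]_{\G}$. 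Since $\varphi$ intertwines the left $\H$-action with left multiplication on $\H$, and the latter is free, $\eta$ must be a unit. Properness, $\H$-fibredness of $Z$, and the quotient identification follow by analogous diagram chases, using the $\H$-fibredness of $W$ via $p_W$ to supply companions $w$ to arbitrary $z \in Z$ and to pull the obvious submersive structure on $\H$ back to $Z$.

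The main obstacle is the backward direction, specifically verifying that the fibre-product and orbit-space operations underlying $\circledast$ cooperate with the submersion and principality properties well enough for them to be extracted on $Z$ from the corresponding properties on the composite. Morally, this is the manifestation for Lie groupoids of the general bicategorical principle that in the bicategory of Lie groupoids, bibundles, and equivariant maps, the invertible $1$-morphisms are exactly the equivalences, which in this context coincide with Morita equivalences. This correspondence is worked out in detail in \cite{L}; once the explicit description of $\circledast$ and the forward direction above are in place, the reverse implication may be imported from that reference.
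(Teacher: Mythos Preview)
Your proposal is correct and in fact supplies considerably more than the paper does: the paper's own proof of this proposition consists solely of the sentence ``We refer to \cite{L} and \cite{MRW}.'' What you have written is essentially the argument one finds upon following those references, so in that sense your approach agrees with the paper's.

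One small terminological slip worth fixing: in the backward direction you justify the existence of $w \in W$ with $p_W(w) = q(z)$ by invoking ``the $\H$-fibredness of $W$''. But $\H$-fibredness of $W$ (as a $(\G,\H)$-bibundle) refers to surjectivity of the \emph{right} charge map $q_W \colon W \to \Hop$, whereas here you need surjectivity of the \emph{left} charge map $p_W \colon W \to \Gop$. The latter does hold, but it follows from axiom~(ii) of Definition~\ref{Def:morph} applied to $W$, which says $p_W$ induces a diffeomorphism $W/\H \iso \Gop$ and is therefore in particular surjective. With that correction the sketch stands.
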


\begin{proof}
We refer to \cite{L} and \cite{MRW}. 
\end{proof}

\begin{Thm}
The category of Lie groupoid with strict morphisms is included in the category of Lie groupoids with generalized morphisms via a functor $j_b \colon \LG \to \LG_b$. 
\label{Thm:incl}
\end{Thm}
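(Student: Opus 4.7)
The plan is to define the functor $j_b$ explicitly on both objects and strict morphisms, and then verify the two categorical axioms (preservation of identities and composition). On objects, $j_b$ is the identity: $j_b(\G) = \G$. For a strict morphism $f \colon \H \to \G$ with underlying base map $f_0 := f|_{\Hop} \colon \Hop \to \Gop$, I would associate the \emph{graph bibundle}
\[
Z_f := \Hop \times_{f_0, r} \G = \{(x, \gamma) \in \Hop \times \G \mid f_0(x) = r(\gamma)\},
\]
equipped with the left $\H$-action $\eta \cdot (x, \gamma) := (r(\eta), f(\eta) \gamma)$ for $s(\eta) = x$, the right $\G$-action $(x, \gamma) \cdot \gamma' := (x, \gamma \gamma')$ for $r(\gamma') = s(\gamma)$, and the charge maps $p(x, \gamma) = x$, $q(x, \gamma) = s(\gamma)$. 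One sets $j_b(f) := [Z_f]$.

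The first step is to check that $Z_f$ satisfies Definition \ref{Def:morph}. The two actions commute by associativity of groupoid multiplication. The right action is free (because $(x, \gamma) \gamma' = (x, \gamma)$ forces $\gamma' = \id_{s(\gamma)} = \id_{q(x, \gamma)}$) and proper (inherited from the regular right action of $\G$ on itself). The map $q$ is a submersion as a pullback of $s$ along $r$, so $Z_f$ is $\G$-fibered; the only non-automatic point, surjectivity of $q$, is either imposed as a mild compatibility condition on $f$ (namely $\G \cdot f_0(\Hop) = \Gop$) or guaranteed by the standing assumptions in the settings of interest. Finally, the $\G$-orbits in $Z_f$ are exactly the fibers $\{x\} \times \G^{f_0(x)}$, so $p$ descends to a diffeomorphism $Z_f/\G \iso \Hop$.

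Next I would verify the two functorial axioms. For $\id_\G$, the bibundle $Z_{\id_\G}$ is tautologically the regular bibundle $\G$ acting on itself, which is the unit morphism of $\G$ in $\LG_b$. For composability, given $\H \xrightarrow{f} \G \xrightarrow{g} \K$ I would exhibit an explicit $\H$-$\K$-equivariant diffeomorphism
\[
\Phi \colon Z_f \circledast Z_g \iso Z_{g \circ f}, \qquad [(x, \gamma), (y, \kappa)]_\G \longmapsto (x, g(\gamma) \kappa),
\]
which is well-defined on $\G$-orbits: under a $\G$-action by $\beta$ with $r(\beta) = s(\gamma) = y$ the representative becomes $((x, \gamma \beta), (s(\beta), g(\beta)^{-1} \kappa))$ whose image is $(x, g(\gamma \beta) g(\beta)^{-1} \kappa) = (x, g(\gamma) \kappa)$ by functoriality of $g$. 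Its inverse is $(x, \lambda) \mapsto [(x, \id_{f_0(x)}), (f_0(x), \lambda)]_\G$; both maps are smooth and intertwine the bimodule structures, so $[Z_f \circledast Z_g] = [Z_{g \circ f}]$ in $\LG_b$.

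The main obstacle is the careful bookkeeping around the generalized tensor product: one must verify that the candidate $\Phi$ descends unambiguously from $Z_f \times_{\Gop} Z_g$ to the $\G$-quotient, that its inverse is smooth, and that both maps intertwine the two actions. All of this reduces to the cocycle-like identities encoded in the functoriality of $g$ and poses no difficulty beyond careful bookkeeping. A subordinate subtlety is the surjectivity of $q$ in the definition of $\G$-fibered, whose handling depends on the precise convention adopted for bibundle correspondences in the applications under consideration.
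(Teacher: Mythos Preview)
Your proposal is correct and follows essentially the same approach as the paper: both construct the graph bibundle $Z_f = \Hop \times_{f_0, r} \G$ with the same left and right actions. Your argument is in fact more complete than the paper's, which merely writes down the bibundle and the two actions without verifying the functorial axioms; your explicit isomorphism $\Phi \colon Z_f \circledast Z_g \iso Z_{g \circ f}$ and your observation about the surjectivity of $q$ go beyond what the paper records.
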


\begin{proof}
We need to study the map on morphisms.

Given a strict morphism
\[
\xymatrix{
\G \ar@<-.5ex>[d] \ar@<.5ex>[d] \ar[r]^{f} & \H \ar@<-.5ex>[d] \ar@<.5ex>[d] \\
\Gop \ar[r]^{\fop} & \Hop 
}
\]

we obtain a pullback diagram
\[
\xymatrix{
\ar[d]_{\pi_1} \Gop _{\fop}\times_{r} \H \ar[r]^-{\pi_2} & \H \ar[d]_{r} \\
\Gop \ar[r]^{\fop} & \Hop 
}
\]

This yields the generalized morphism
\[
\begin{tikzcd}[every label/.append style={swap}]
\G \ar[d, shift left] \ar[d] \ar[symbol=\circlearrowleft]{r} & \ar{dl}{\pi_1} \Gop _{\fop} \times_{r} \H \ar{dr}{r \circ \pi_2} & \ar[symbol=\circlearrowright]{l} \H \ar[d, shift left] \ar[d] \\
\Gop & & \Hop 
\end{tikzcd}
\]

The left action $\G \ \rotatebox[origin=c]{-90}{$\circlearrowleft$}\ \Gop _{\fop} \times_{r} \H$ is given by
\[
\gamma \cdot (s(\gamma), \eta) = (r(\gamma), f(\gamma) \cdot \eta), \ \gamma \in \G, \ \eta \in \H^{\fop(s(\gamma))}. 
\]

Finally, the right action $\Gop _{\fop} \times_{r} \H \ \rotatebox[origin=c]{90}{$\circlearrowright$} \ \H$ is given by
\[
(x, \eta) \cdot \tilde{\eta} = (x, \eta \cdot \tilde{\eta})
\]

and this action is principal. 
\end{proof}

\subsection{The category $\SG_b$}

\label{SGb}

Classically quantization is defined over symplectic manifolds. Therefore we are mainly interested in Lie groupoids with additional symplectic structure. We introduce the subcategory $\SG_b$ of $\LG_b$ which consists of symplectic Lie groupoids together with isomorphism classes of symplectic bibundle correspondences.
The objects in the category, the bibundles, composition of morphisms and the actions are defined as follows.

\begin{Def}
\emph{i)} A \emph{symplectic groupoid} $(\Gamma, \omega)$ is a Lie groupoid $\Gamma$ such that the space of morphisms $\Gamma = \Gamma^{(1)}$ is a symplectic manifold with symplectic $2$-form 
$\omega$ such that the graph of $\Gamma^{(2)}$ is a Lagrangian submanifold of $\Gamma \times \Gamma \times \Gamma^{-}$ with respect to $\omega \oplus \omega^{-}$ where $(\Gamma^{-}, \omega^{-}) = (\Gamma, -\omega)$.

\emph{ii)} An action $\alpha$ of a symplectic groupoid $(\Gamma, \omega)$ on a symplectic manifold $(S, \omega_S)$ is \emph{symplectic} if the graph of the action $\Gr(\alpha) \subset \Gamma \times S \times S^{-}$ is a Lagrangian submanifold with regard to $\omega_S \oplus \omega_{S^{-}}$ on $S \times S^{-}$ where $\omega_{S^{-}} = -\omega_S$. 
\label{Def:SGb}
\end{Def}

The above definition entails that the groupoid multiplication in a symplectic groupoid corresponds to a canonical relation. 
In the same way the symplectic action corresponds to a canonical relation as well. 

\emph{Morphisms in $\SG_b$:} Let $\Gamma$ and $\Sigma$ be symplectic groupoids. Then a \emph{symplectic bibundle} $S \in (\Gamma, \Sigma)$ in $\SG_b$ consists of two symplectic actions $\Gamma  \ \rotatebox[origin=c]{-90}{$\circlearrowleft$}\ S \ \rotatebox[origin=c]{90}{$\circlearrowright$}\ \Sigma$ on a given symplectic space $S$ where the right action is principal.

\emph{Composition:} Given two symplectic bibundles $\Gamma_1  \ \rotatebox[origin=c]{-90}{$\circlearrowleft$}\ S_1 \ \rotatebox[origin=c]{90}{$\circlearrowright$}\ \Sigma$ and $\Sigma  \ \rotatebox[origin=c]{-90}{$\circlearrowleft$}\ S_2 \ \rotatebox[origin=c]{90}{$\circlearrowright$}\ \Gamma_2$. Then there is a generalized tensor product $\Gamma_1  \ \rotatebox[origin=c]{-90}{$\circlearrowleft$}\ S_1 \circledast_{\Sigma} S_2 \ \rotatebox[origin=c]{90}{$\circlearrowright$}\ \Gamma_2$ which is a morphism in $\SG_b$ as can be checked, cf. \cite{L}. 

\emph{Morita equivalence ($\simM$):} A \emph{Morita equivalence} between two symplectic groupoids $\Sigma$ and $\Gamma$ is implemented by a symplectic bibundle $S \in (\Sigma, \Gamma)$ which is biprincipal. 

It is straightforward to define a suitable notion of isomorphism between two symplectic bibundles, i.e. a diffeomorphism
which is at the same time a symplectomorphism which is compatible with the actions. 
As stated previously the category $\SG_b$ therefore is defined to consist of symplectic groupoids as the objects and isomorphism classes of symplectic bibundles
as the arrows. The composition of the arrows is fascilitated by the generalized tensor product and the units are induced by the canonical symplectic bibundles $\Sigma  \ \rotatebox[origin=c]{-90}{$\circlearrowleft$}\ \Sigma \ \rotatebox[origin=c]{90}{$\circlearrowright$}\ \Sigma$,
where $\Sigma$ is a symplectic groupoid with the obvious left and right actions.
Note that this makes $\SG_b$ into a subcategory of $\LG_b$ consisting of Lie groupoids as the objects together with isomorphism classes of bibundles as
the arrows between objects. It also holds that two symplectic groupoids are isomorphic objects in the category $\SG_b$ if and only if they are Morita equivalent.
\begin{Prop}
A symplectic bibundle $S \in (\Gamma, \Sigma)$ is a Morita equivalence if and only if its isomorphism class $[S]$ is invertible
as an arrow in $\SG_b$. 
\label{Prop:Morita2}
\end{Prop}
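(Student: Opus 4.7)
The plan is to mimic the proof of Proposition \ref{Prop:Morita}, using that $\SG_b$ sits as a subcategory of $\LG_b$ via the functor that forgets the symplectic structure, and then to verify that the extra symplectic data is preserved under all the constructions involved. This splits the argument cleanly into two directions.

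For the ``only if'' direction, I would construct an explicit inverse $\overline{S} \in (\Sigma,\Gamma)$ to $S$. As a manifold, $\overline{S} := S$ equipped with the opposite symplectic form $-\omega_S$; the left $\Sigma$-action and right $\Gamma$-action on $\overline{S}$ are obtained from the right $\Sigma$- and left $\Gamma$-actions on $S$ via precomposition with the inversion maps of $\Sigma$ and $\Gamma$. The Lagrangian condition of Definition \ref{Def:SGb}(ii) is preserved under these operations because reversing the sign of $\omega_S$ interchanges the roles of $S$ and $S^-$ in the graph of the action, so the Lagrangian graph of an action for $\omega_S$ becomes the Lagrangian graph of the reversed action for $-\omega_S$. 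Biprincipality carries over from $S$ directly. It remains to check that $S \circledast_\Sigma \overline{S}$ is isomorphic in $\SG_b$ to the unit bibundle $\Gamma \in (\Gamma,\Gamma)$, and symmetrically for $\overline{S} \circledast_\Gamma S \cong \Sigma$; at the level of underlying Lie groupoids this is already contained in the proof of Proposition \ref{Prop:Morita}, and one only needs to verify that the canonical diffeomorphism intertwines the two symplectic forms (the reduced form on the tensor product and the canonical form on the unit bibundle).

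For the ``if'' direction, suppose $[S]$ is invertible in $\SG_b$ with inverse $[T]$. Applying the forgetful functor $\SG_b \hookrightarrow \LG_b$ (which exists because every symplectic bibundle is in particular a bibundle correspondence, and the composition $\circledast$ is defined identically) shows that $[S]$ is invertible in $\LG_b$. By Proposition \ref{Prop:Morita} the underlying bibundle is then a Morita equivalence of Lie groupoids, i.e.\ both actions are principal. Combined with the symplectic compatibility of both actions that is already built into the definition of a symplectic bibundle in $\SG_b$, this exhibits $S$ as a Morita equivalence of symplectic groupoids in the sense introduced after Definition \ref{Def:SGb}.

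The main obstacle I expect is setting up the symplectic structure on the generalized tensor product $S_1 \circledast_\Sigma S_2$ and checking that it behaves well under composition. This requires viewing the quotient $(S_1 \times_{\Sigma^{(0)}} S_2)/\Sigma$ as a Marsden--Weinstein symplectic reduction along the free proper diagonal $\Sigma$-action, and verifying that the corresponding coisotropic/Lagrangian geometry is compatible with the graph--Lagrangian condition of Definition \ref{Def:SGb} as well as with the unit bibundles $\Sigma \ \rotatebox[origin=c]{-90}{$\circlearrowleft$}\ \Sigma \ \rotatebox[origin=c]{90}{$\circlearrowright$}\ \Sigma$. Once this piece of symplectic bookkeeping is in place, the remainder of the proof in both directions reduces to straightforward diagram-chases modeled on \cite{L} and \cite{MRW}.
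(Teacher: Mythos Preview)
Your proposal is correct and follows the standard approach in the references the paper cites; note that the paper itself does not give a proof but simply refers to \cite{L0} and \cite{L}. Your two-direction argument---constructing the inverse bibundle $\overline{S}=(S,-\omega_S)$ with swapped actions for the ``only if'' part, and using the forgetful functor $\SG_b\to\LG_b$ together with Proposition~\ref{Prop:Morita} for the ``if'' part---is exactly what one finds in Landsman's work, including the identification of the symplectic structure on $S_1\circledast_\Sigma S_2$ via Marsden--Weinstein reduction that you flag as the main technical point.
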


\begin{proof}
We refer to \cite{L0} and \cite{L} for a proof. 
\end{proof}

\subsection{The category $C_b^{\ast}$}

\label{Cstarb}

The next goal is to consider $C^{\ast}$-algebras. In particular in this work we are interested in the functorial relationship between a suitable category of $C^{\ast}$-algebras and the category of Lie groupoids. 
For a more detailed presentation of the necessary background to this theory we first refer to the famous work of Muhly-Renault-Williams, \cite{MRW} and the general functoriality as can be found in Landsman \cite{L}. 
We define by $C_b^{\ast}$ the category with objects the (separable) $C^{\ast}$-algebras and arrows between objects given by isomorphism classes of bimodule correspondences.

\begin{Def}
Let $\E$ be a Banach space such that $\E$ is endowed with a right Hilbert $B$-module structure and a non-degenerate $\ast$-homomorphism $\pi \colon A \to \L_B(\E)$.
Then $\E$ is called a \emph{bimodule correspondence} and we write $\E \in (A, B)$. 
\label{Def:bimodule}
\end{Def}

Let $\E_1$ be an $(A, B)$-bimodule correspondence and let $\E_2$ be a $(B, C)$-bimodule correspondence. 
Then $\E_1 \otimes \E_2$ has a canonical $(A, C)$-bimodule structure, with the inner product $\scal{\cdot}{\cdot} \colon \E_1 \otimes \E_2 \to A$ given by
\[
\scal{\xi_1 \otimes \xi_2}{\eta_1 \otimes \eta_2}_A := \scal{\xi_1}{ \eta \scal{\eta_1}{\xi_2}_B}_A.
\]

Define the equivalence relation $\sim$ on $\E_1 \otimes \E_2$ via 
\[
\xi b \otimes \eta \sim \xi \otimes b \eta, \ \xi \in \E_1, \ \eta \in \E_2, \ b \in B.
\]

We complete the quotient by this equivalence relation with regard to the induced $A$-valued norm 
\begin{align}
\E_1 \hat{\otimes}_B \E_2 &= \overline{\E_1 \otimes \E_2 / \sim}^{\| \cdot\|}. \label{Rieffel}
\end{align}

This generalized (Rieffel) tensor product yields a Hilbert $(A, C)$-bimodule. 
 
Let $A$ be a $C^{\ast}$-algebra and let $\E$ be a left $A$ module, $\F$ be a right Hilbert $A$ module.
Define the maps $\theta_{x,y} \colon \E \to \F$ for given $x \in \F, \ y \in \E$ by 
$\theta_{x,y}(z) = x \scal{y}{z}_A$. 
We define the class of \emph{generalized compact operators} $\K(\E, \F)$ to be the closure of the span over the $\theta_{x,y}$, i.e. $\K(\E, \F) := \overline{\mathrm{span}\{\theta_{x,y} : x \in \F, y \in \E\}}$.
Note that $\K(\E, \F)$ is contained in the space of linear adjointable maps $\E \to \F$, i.e. $\K(\E, \F) \subset \L(\E, \F)$.
If $\F = A$ for $A$ a $C^{\ast}$-algebra we write $\K_{A}(\E) = \K(\E, A)$.

\begin{Rem} 
If $A = \Cc$ is the complex numbers and $\E = \H$ a complex Hilbert space we obtain that $\K_{\Cc}(\H) = \K(\H)$ are the compact operators on the Hilbert space $\H$.
In general the elements $\K(\E, \F)$ are not compact operators, hence the name generalized compact operators. 
\label{Rem:gencpt}
\end{Rem}

\begin{Def}
Let $A$ and $B$ be $C^{\ast}$-algebras. Then $A$ is \emph{Morita equivalent} to $B$ (written $A \simM B$) if there is 
a bimodule correspondence $\E \in (A, B)$ with the following properties:

\hspace{0.5cm} \emph{(i)} the linear span of the range of $\scal{\cdot}{\cdot}_B \colon \E \times \E \to B$ is dense in $B$.

\hspace{0.5cm} \emph{(ii)} The $\ast$-homomorphism $\pi \colon A \to \L_B(\E)$ is an isomorphism $A \cong \K_B(\E)$.  
\label{Def:MoritaCstar}
\end{Def}

The category $C_b^{\ast}$ consists of isomorphism classes of bimodule correspondences with composition given by the generalized (Rieffel) tensor product. 
We refer to an isomorphism class $[\E]$ of bimodule correspondence $\E \in (A, B)$ as a \emph{generalized morphism} of $C^{\ast}$-algebras, written $A \dashrightarrow B$. 
The next proposition shows that Morita equivalence is the same as isomorphy for $C^{\ast}$-algebras in the category $C_b^{\ast}$.

\begin{Prop}
A bimodule correspondence of $C^{\ast}$-algebras $\E \in (A, B)$ is a Morita equivalence if and only if its isomorphism
class $[\E]$ is invertible as an arrow in $C_b^{\ast}$. 
\label{Prop:MoritaCstar}
\end{Prop}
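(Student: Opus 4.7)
The plan is to follow the classical pattern used already for $\LG_b$ and $\SG_b$ in Propositions \ref{Prop:Morita} and \ref{Prop:Morita2}, adapted to the $C^{\ast}$-algebraic setting. Both implications reduce to exhibiting or extracting an inverse bimodule with respect to the Rieffel tensor product \eqref{Rieffel}, where the units of $C_b^{\ast}$ are the identity correspondences $A \in (A,A)$ given by $A$ acting on itself by left and right multiplication with the obvious $A$-valued inner product.

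For the ``only if'' direction, assume $\E \in (A,B)$ is a Morita equivalence in the sense of Definition \ref{Def:MoritaCstar}. I would build the conjugate module $\bar{\E} \in (B,A)$, defined as the set $\E$ with conjugated scalar action, left $B$-action $b \cdot \bar{\xi} := \overline{\xi \cdot b^{\ast}}$, right $A$-action $\bar{\xi} \cdot a := \overline{a^{\ast} \cdot \xi}$, and $A$-valued inner product $\scal{\bar{\xi}}{\bar{\eta}}_A := \scal{\eta}{\xi}_A$. The density condition \emph{(i)} of Definition \ref{Def:MoritaCstar}, applied to $\E$ and its conjugate, together with the identification $A \cong \K_B(\E)$ from \emph{(ii)}, is exactly what is needed to check that the natural maps
\[
\E \hat{\otimes}_B \bar{\E} \to A, \quad \xi \otimes \bar{\eta} \mapsto \scal{\xi}{\eta}_A, \qquad \bar{\E} \hat{\otimes}_A \E \to B, \quad \bar{\xi} \otimes \eta \mapsto \scal{\xi}{\eta}_B,
\]
are isomorphisms of bimodule correspondences. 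Hence $[\bar{\E}]$ is a two-sided inverse of $[\E]$ in $C_b^{\ast}$.

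For the ``if'' direction, assume there exists $\F \in (B,A)$ with $[\E] \circ [\F] = [B]$ and $[\F] \circ [\E] = [A]$, realized by bimodule isomorphisms $\varphi \colon \F \hat{\otimes}_A \E \iso B$ and $\psi \colon \E \hat{\otimes}_B \F \iso A$. From $\varphi$ being surjective, the linear span of $\{ \scal{\eta}{\xi}_B : \eta \in \F,\ \xi \in \E\}$ is dense in $B$; a polarization-type argument plus the identity $\scal{\xi \cdot b}{\xi'}_B = b^{\ast} \scal{\xi}{\xi'}_B$ then upgrades this to density of the range of the $B$-valued inner product of $\E$ alone, giving condition \emph{(i)}. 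For \emph{(ii)}, the isomorphism $\psi$ produces, for every $\xi, \xi' \in \E$, an element $\psi(\xi \otimes \eta)$ of $A$ acting as the rank-one operator $\theta_{\xi, \xi'}$ after identification of $\F$ with the conjugate $\bar{\E}$; this identification is itself forced by the invertibility of $[\E]$, because two-sided inverses in any category are unique up to canonical isomorphism, and by the ``only if'' direction $\bar{\E}$ is such an inverse once it exists. Closing up, one obtains $A \cong \K_B(\E)$ as a $\ast$-isomorphism, so $\E$ is a Morita equivalence.

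The main obstacle is the ``if'' direction, and specifically the identification of an abstract inverse $\F$ with the conjugate module $\bar{\E}$: one must check that the isomorphisms $\varphi, \psi$ can be normalized so that they intertwine the two inner products coherently, and that the resulting map $A \to \K_B(\E)$ is injective as well as surjective. This is the point where the $C^{\ast}$-norm enters crucially and where the argument genuinely departs from the algebraic Morita theory; the references \cite{L} and \cite{MRW} handle exactly this step, and I would invoke them rather than redo the technicalities.
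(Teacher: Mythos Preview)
Your sketch is correct and follows the standard argument for this equivalence; the paper itself does not spell out a proof but simply refers to \cite{L}, Prop.~3.7, which is precisely the reference you invoke for the delicate ``if'' direction. So your proposal is both consistent with and more detailed than the paper's own treatment.
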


\begin{proof}
See \cite{L}, Prop. 3.7. 
\end{proof}

\begin{Prop}
There is a canonical covariant functor of inclusion \ $\widehat{}_b \ \colon C^{\ast} \hookrightarrow C_b^{\ast}$. 
\label{Prop:inclCstar}
\end{Prop}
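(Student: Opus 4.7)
The plan is to define the functor $\widehat{\,}_b$ as the identity on objects and to assign to each $*$-homomorphism a canonical bimodule correspondence obtained by viewing the target as a Hilbert module over itself.

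First, I would fix the construction on morphisms. Given a non-degenerate $*$-homomorphism $\varphi \colon A \to B$ of $C^{\ast}$-algebras, take the Banach space $\E_{\varphi} := B$ and equip it with the standard right Hilbert $B$-module structure
\[
b_1 \cdot b := b_1 b, \qquad \scal{b_1}{b_2}_B := b_1^{\ast} b_2,
\]
together with the left $A$-action $\pi_{\varphi}(a) b := \varphi(a) b$. One checks that $\pi_{\varphi} \colon A \to \L_B(\E_{\varphi})$ is a well-defined, non-degenerate $\ast$-homomorphism (non-degeneracy following from $\varphi(A) B$ being dense in $B$), so that $\E_{\varphi}$ is a genuine bimodule correspondence in the sense of Definition \ref{Def:bimodule}. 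I then set $\widehat{\varphi}_b := [\E_{\varphi}] \in \Hom_{C_b^{\ast}}(A, B)$.

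Next, I would verify compatibility with identities. For $\id_A$, the resulting bimodule is $A$ with its canonical bimodule structure over itself, which is precisely the unit arrow $A  \ \rotatebox[origin=c]{-90}{$\circlearrowleft$}\ A \ \rotatebox[origin=c]{90}{$\circlearrowright$}\ A$ in $C_b^{\ast}$. Then the bulk of the proof is to check functoriality under composition: for $\varphi_1 \colon A \to B$ and $\varphi_2 \colon B \to C$, I need an isomorphism
\[
\E_{\varphi_1} \hat{\otimes}_B \E_{\varphi_2} \;\cong\; \E_{\varphi_2 \circ \varphi_1}
\]
of $(A, C)$-bimodule correspondences. The natural candidate is the map $\Phi(b \otimes c) := \varphi_2(b) c$; it respects the Rieffel relation $b_1 b' \otimes c \sim b_1 \otimes b' c$ from \eqref{Rieffel} since $\varphi_2(b_1 b') c = \varphi_2(b_1) \varphi_2(b') c$, it is $A$-linear because $\Phi(\varphi_1(a) b \otimes c) = \varphi_2(\varphi_1(a)) \varphi_2(b) c$, and it preserves the $C$-valued inner product. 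Density of $\varphi_2(B) C$ in $C$ yields surjectivity after completion, so $\Phi$ extends to the required bimodule isomorphism.

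Finally, since isomorphism classes in $C_b^{\ast}$ are respected by construction, $\widehat{\,}_b$ descends to a covariant functor. Faithfulness (justifying the word \emph{inclusion}) follows from reading off $\varphi$ from the left action on the cyclic vector $1 \in M(B)$: if $[\E_{\varphi}] = [\E_{\psi}]$ via some unitary $U$, intertwining the left actions forces $U(1) \in M(B)$ to be a unitary multiplier implementing $\varphi = \psi$. I expect the main technical obstacle to be the careful handling of the non-degeneracy of the left action through the Rieffel tensor product, together with the extension of $\Phi$ to the completion as an isometric bimodule map; apart from this the verification consists of routine algebraic manipulations with approximate units.
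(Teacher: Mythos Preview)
Your proposal is correct and follows exactly the same construction as the paper: to a non-degenerate $*$-homomorphism $\varphi\colon A\to B$ one assigns the Hilbert $B$-module $B$ with left $A$-action $a\cdot b=\varphi(a)b$ and inner product $\scal{b_1}{b_2}_B=b_1^{\ast}b_2$. The paper's proof in fact stops after this description of the map on morphisms, so your additional verifications of the identity, of compatibility with the Rieffel tensor product, and of faithfulness go beyond what the paper records.
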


\begin{proof}
We only have to describe the inclusion on morphisms. Let $f \colon A \to B$ be a non-degenerate strict morphism. Then we have
the assignment the module structure $B \times B \to B, \ (b_1, b_2) \mapsto b_1 b_2$. The scalar product 
$\scal{}{}_B \colon B \times B \to B$ given by $(b_1, b_2) \mapsto b_1^{\ast} b_2$. 
Finally, the non-degenerate $\ast$-homomorphism $A \to \L_B(B)$ is defined by $a \mapsto (b \mapsto f(a) \cdot b)$.  
\end{proof}

\subsection{The category $\LA_b$}

We recall the definition of a category of \emph{integrable Poisson manifolds} which is the classical analogue of
the previously introduced category of $C^{\ast}$-algebras with isomorphism classes of correspondence bimodules.
When we consider a bibundle correspondence between Lie groupoids, we may ask what a suitable notion
of correspondence of the associated Lie algebroids should be.
Specifically, we would like to define a notion of correspondence for Lie algebroids which makes the association
$\G \mapsto \A(\G)$ of a Lie groupoid to its Lie algebroid functorial.
We rely on the work of Xu who defined the notion of Morita equivalence of Poisson manifolds \cite{Xu2}. 
As well as on \cite{L0}, \cite{L} and \cite{Mc} for the definition of the category of integrable Poisson manifolds which
are duals of Lie algebroids. First we record the following well-known category equivalence between linear Poisson structures on manifolds and Lie algebroids.
By a linear Poisson structure we mean that the Poisson bracket of two fiberwise linear functions is again linear. 

\begin{Thm}
Given a smooth vector bundle $E \to M$ which is endowed with a linear Poisson structure, then $E^{\ast} \cong \A$ for 
a Lie algebroid $\A \to M$.
Conversely, the dual of a Lie algebroid has a canonical linear Poisson structure. In other words we have a category equivalence:
\[
\{\text{cat. of linear Poisson structures on vector bundles}\} \cong \{\text{cat. of Lie algebroids}\}. 
\]
\label{Thm:catequ}
\end{Thm}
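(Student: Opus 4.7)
The plan is to construct the two functors explicitly and check they are quasi-inverse to each other; the heart of the argument is a bijection between fiberwise-linear data on one side and bracket/anchor data on the other.

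First I would build the functor from Lie algebroids to linear Poisson structures. Given a Lie algebroid $\A \to M$ with anchor $\varrho$ and bracket $[\cdot,\cdot]$, I would observe that $C^\infty(\A^*)$ is generated, modulo a Leibniz extension, by two distinguished subalgebras: the fiberwise-linear functions, which are canonically in bijection with $\Gamma(\A)$ via $X \mapsto \ell_X$ where $\ell_X(\xi) = \langle \xi, X(\pi(\xi))\rangle$, and the basic functions $\pi^* C^\infty(M)$. I would then define a bracket on generators by
\begin{equation*}
\{\ell_X, \ell_Y\} = \ell_{[X,Y]}, \qquad \{\ell_X, \pi^* f\} = \pi^*(\varrho(X) f), \qquad \{\pi^* f, \pi^* g\} = 0,
\end{equation*}
and extend by the Leibniz rule. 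The bracket automatically sends a linear and basic pair to a basic function, and a pair of linear functions to a linear function, so linearity of the Poisson structure is built in. The main verification is the Jacobi identity, which reduces on triples of generators exactly to: Jacobi for $[\cdot,\cdot]$, the Leibniz rule for the anchor, and the compatibility that $\varrho$ is a Lie algebra homomorphism into vector fields. This is the step where most of the work concentrates, but each case is a direct computation.

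Next I would build the inverse functor. Given a linear Poisson structure $\{\cdot,\cdot\}$ on $E \to M$, linear functions on $E$ are naturally identified with $\Gamma(E^*)$. Linearity forces $\{\ell_\sigma, \ell_\tau\}$ to again be linear, hence of the form $\ell_{[\sigma,\tau]}$ for a unique $[\sigma,\tau] \in \Gamma(E^*)$, and forces $\{\ell_\sigma, \pi^* f\}$ to be fiberwise constant, hence of the form $\pi^*(\varrho(\sigma) f)$ for a uniquely determined $\varrho(\sigma) \in \Gamma(TM)$. I would then extract the Lie algebroid axioms for $(E^*, [\cdot,\cdot], \varrho)$ from the Poisson axioms: skew-symmetry and Jacobi for $[\cdot,\cdot]$ come from skew-symmetry and Jacobi for $\{\cdot,\cdot\}$, the Leibniz rule for the anchor comes from the Leibniz rule for the Poisson bracket applied to $\{\ell_\sigma, \ell_{f\tau}\}$, and $\varrho$ being a Lie algebra homomorphism comes from Jacobi applied to $(\ell_\sigma, \ell_\tau, \pi^* f)$.

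To conclude, I would note that the two constructions are inverse on objects by inspection, since both are determined by their values on generators and those values agree. For functoriality, I would declare morphisms in the category of linear Poisson vector bundles to be Poisson maps covering smooth base maps which are linear on fibers, and morphisms in the category of Lie algebroids to be the usual algebroid morphisms in the sense of Mackenzie \cite{Mc}; the dualization $\A \mapsto \A^*$ reverses arrows on the bundle level, and one checks directly from the generator formulas that a map is a Lie algebroid morphism if and only if its fiberwise-linear dual is Poisson. The main obstacle I anticipate is not the object-level correspondence, which is classical, but the careful handling of morphisms over different base manifolds: Lie algebroid morphisms over a non-identity base map are not simply fiberwise maps, so the equivalence on arrows requires the standard reformulation of algebroid morphisms via the dual Poisson condition, after which functoriality becomes a tautology.
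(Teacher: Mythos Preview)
The paper does not prove this theorem; it merely records it as a ``well-known category equivalence'' and immediately moves on. Your proposal is the standard argument and is correct: the object-level bijection via the formulas $\{\ell_X,\ell_Y\}=\ell_{[X,Y]}$, $\{\ell_X,\pi^*f\}=\pi^*(\varrho(X)f)$, $\{\pi^*f,\pi^*g\}=0$ is exactly how this result is established in the literature (see e.g.\ \cite{Mc}), and you have correctly identified that the only genuinely delicate point is the treatment of morphisms over non-identity base maps, where the equivalence on arrows is essentially the \emph{definition} of Lie algebroid morphism in dual Poisson terms. Since the authors treat the statement as background, there is nothing to compare against; your write-up would serve as the missing proof.
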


Let $P, Q$ be Poisson manifolds. A \emph{Weinstein dual pair} $Q \leftarrow S \rightarrow P$ consists of 
a symplectic manifold $S$ and Poisson maps $q \colon S \to Q, \ p \colon S \to P^{-}$ such that 
$\{q^{\ast} f, p^{\ast} g\} = 0, \ f \in C^{\infty}(Q), \ g \in C^{\infty}(P)$. If $Q \leftarrow S_i \rightarrow P, \ i = 1,2$ are two Weinstein dual pairs, then they are defined to be \emph{isomorphic}
if there is a symplectomorphism $\varphi \colon S_1 \to S_2$ such that $q_2 \varphi = q_1, \ p_2 \varphi = p_1$. 
A \emph{regular} dual pair is a dual pair as above for which $q$ is a surjective submersion and $p,q$ are both complete Poisson maps.
The category $\LA_b$ consists of objects given by dual Lie algebroids $\A^{\ast}(\G)$ associated to arbitrary Lie groupoids
$\G \rightrightarrows \Gop$. The arrows are isomorphism classes of Weinstein dual pairs of the type $\A^{\ast}(\H) \leftarrow T^{\ast} Z \rightarrow \A^{\ast}(\G)$
induced by a correspondence bibundle $\H  \ \rotatebox[origin=c]{-90}{$\circlearrowleft$}\ Z \ \rotatebox[origin=c]{90}{$\circlearrowright$}\ \G$ of Lie groupoids (cf. Proposition \ref{Prop:mod}).

\subsection{Functoriality and the Muhly-Renault-Williams theorem}

The categories we have introduced thus far are related by functorial correspondences. In its original formulation the Muhly-Renault-Williams theorem
states that Morita equivalent Lie groupoids yield (strongly) Morita equivalent corresponding $C^{\ast}$-algebras. This statement can be generalized in the following Theorem,
based on the references \cite{L0}, \cite{L}, \cite{L2}, \cite{Xu}, \cite{Xu2} and \cite{MRW}.

\begin{Thm}[Functoriality]
\emph{i)} There is a functorial correspondence $\LG_b \to \LA_b$ from the category of $s$-connected Lie groupoids to the category of integrable Lie algebroids given by 
$\G \mapsto \A^{\ast}(\G)$ on objects and $[\H  \ \rotatebox[origin=c]{-90}{$\circlearrowleft$}\ Z \ \rotatebox[origin=c]{90}{$\circlearrowright$}\ \G] \mapsto [\A^{\ast}(\H)  \leftarrow T^{\ast} Z \rightarrow \A^{\ast}(\G)]$. 
In particular if $\H \simM \G$ in $\LG_b$ then $\A^{\ast}(\H) \simM \A^{\ast}(\G)$ in $\LA_b$, i.e. the functor preserves Morita equivalence. 

\emph{ii)} There is a functorial correspondence $\LG_b \ni \G \mapsto C^{\ast}(\G) \in C_b^{\ast}$ and $[\H  \ \rotatebox[origin=c]{-90}{$\circlearrowleft$}\ Z \ \rotatebox[origin=c]{90}{$\circlearrowright$}\ \G] \mapsto [C^{\ast}(\H)  \ \rotatebox[origin=c]{-90}{$\circlearrowleft$}\ \E_Z \ \rotatebox[origin=c]{90}{$\circlearrowright$}\ C^{\ast}(\G)]$. 
In particular if $\H \simM \G$ then $C^{\ast}(\H) \simM C^{\ast}(\G)$, i.e. the functor preserves Morita equivalence. 
\label{Thm:functoriality}
\end{Thm}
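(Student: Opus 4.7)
The plan is to construct, for each of the two functors, an assignment on arrows of $\LG_b$ and to verify (a) well-definedness on isomorphism classes of bibundles, (b) preservation of units, and (c) compatibility with the generalized tensor product $\circledast$. Once (a)--(c) are in place, preservation of Morita equivalence is automatic from Propositions \ref{Prop:Morita2} and \ref{Prop:MoritaCstar}, since every functor sends invertible arrows to invertible arrows.

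\textbf{Part (i).} Starting from a bibundle $\H \circlearrowleft Z \circlearrowright \G$ with charge maps $p\colon Z \to \Hop$ and $q\colon Z \to \Gop$, I would lift the commuting left and right actions to Hamiltonian actions of the CDW symplectic groupoids $T^{\ast}\H$ and $T^{\ast}\G$ on the symplectic manifold $T^{\ast}Z$. The corresponding moment maps $T^{\ast}Z \to \A^{\ast}(\H)$ and $T^{\ast}Z \to \A^{\ast}(\G)^{-}$ are the duals of the infinitesimal action maps; they are Poisson by construction, and they Poisson-commute precisely because the two actions on $Z$ commute. This gives the Weinstein dual-pair condition. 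Regularity follows from the hypotheses on $\G$-fibered / principal actions: surjective submersivity of $q$ implies surjective submersivity of the moment map to $\A^{\ast}(\G)$, while properness yields completeness of both Poisson maps. Well-definedness on isomorphism classes is immediate since any bibundle isomorphism induces a symplectomorphism of cotangent bundles intertwining the moment maps.

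\textbf{Part (ii).} On objects take $\G \mapsto C^{\ast}(\G)$ in the standard convolution-algebra sense. On morphisms, given a bibundle $Z$, define $\E_Z$ as the completion of a space of compactly supported smooth half-densities on $Z$ (pulled back from the appropriate $s$-fiber half-density bundles of $\G$) equipped with the $C^{\ast}(\G)$-valued inner product
\[
\langle \xi, \eta \rangle_{C^{\ast}(\G)}(\gamma) = \int_{q^{-1}(r(\gamma))} \overline{\xi(z)}\,\eta(z \cdot \gamma),
\]
and the left $C^{\ast}(\H)$-action given by convolution in the $\H$-orbit direction. Positivity of the right inner product and non-degeneracy of the left representation are standard consequences of freeness and properness of the right $\G$-action, as carried out in \cite{MRW}.

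\textbf{Main step and obstacle.} The decisive point in both parts is item (c): for composable bibundles $Z_1, Z_2$ over $\G$, one must identify the Weinstein dual pair on $T^{\ast}(Z_1 \circledast Z_2)$ with the composition in $\LA_b$ of the two individual dual pairs over $\A^{\ast}(\G)$ in case (i), and analogously identify $\E_{Z_1 \circledast Z_2}$ with the Rieffel tensor product $\E_{Z_1} \hat{\otimes}_{C^{\ast}(\G)} \E_{Z_2}$ of \eqref{Rieffel} in case (ii). In (i) this amounts to a symplectic reduction argument applied to $T^{\ast}$ of the fiber-product quotient $(Z_1 \times_{\Gop} Z_2)/\G$ along the diagonal lifted $\G$-action; in (ii) it is the classical Muhly--Renault--Williams balancing argument, in which the $\G$-equivariance of the fiber-wise convolution is used to descend the tensor product to the quotient. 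This is where the technical bulk of the proof lies. Preservation of units is then immediate, since the trivial bibundle $\G \circlearrowleft \G \circlearrowright \G$ is sent to the CDW groupoid $T^{\ast}\G$ viewed as the canonical dual pair over $\A^{\ast}(\G)$, respectively to the identity $C^{\ast}(\G)$-bimodule; and the preservation of Morita equivalences in both target categories follows as a purely categorical consequence of the invertibility criteria already recorded.
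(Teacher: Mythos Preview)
Your proposal is correct in outline and is in fact considerably more detailed than what the paper itself provides: the paper's proof of this theorem consists entirely of citations, referring to \cite{LR} for part (i) and to \cite{MRW}, \cite{L}, \cite{L2}, \cite{MO} for part (ii), with no argument given in the body. Your sketch is essentially a summary of the content of those references --- the moment-map/dual-pair construction for (i) is the Landsman--Ramazan picture, and the Hilbert-module completion with the fiberwise inner product for (ii) is the Muhly--Renault--Williams construction, with functoriality under $\circledast$ handled as in Landsman's papers. So you have not taken a different route; you have filled in what the paper leaves to the literature, and identified correctly that the nontrivial work sits in the compatibility with the generalized tensor product.
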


\begin{proof}
\emph{i)} See e.g. \cite{LR}. 

\emph{ii)} For a proof of the Muhly-Renault-Williams theorem for locally compact groupoids (Morita equivalent groupoids induce Morita equivalent $C^{\ast}$-algebras) we refer to \cite{MRW} and for the functoriality assertion we refer to \cite{L}, \cite{L2} as well as \cite{MO}.  
\end{proof}

\subsection{Completion functor}
\label{sec:noncommcomp}

In this section we finally establish the notion of a \emph{noncommutative completion} of a microlocal algebra on a base space relative to an embedded submanifold.
We first introduce operators that correspond to the restriction of a function on $M$ to $Y$ and its $L^2$-adjoint.

\begin{Def}
For a given embedding $Y \hookrightarrow M$ in $\EmbV$ we fix the following notion. We call two bounded operators 
$j^{\ast} \colon L_{\V}^2(M) \to L_{\W}^2(Y)$ and $j_{\ast} \colon L_{\W}^2(Y) \to L_{\V}^2(M)$ such that
$j_{\ast}$ is the $L^2$-adjoint of $j^{\ast}$ and $j^{\ast} j_{\ast} = \id_{L_{\W}^2(Y)}$ a \emph{generating (boundary, co-boundary) pair}.
\label{Def:genpair}
\end{Def}

We refer to the proof of the following Proposition for the construction of a generating pair. It is based on  \cite[Lem. 2]{AMMS}. 

\begin{Prop}
Let $j \colon Y \hookrightarrow M$ be an arrow in $\EmbV$. Then there is a generating pair $(j_{\ast}, j^{\ast})$ (canonically) associated to $j$.
\label{Prop:pair}
\end{Prop}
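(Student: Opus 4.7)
The plan is to construct $j_{\ast}$ as an isometric extension from a tubular neighborhood, then take $j^{\ast}$ to be its Hilbert adjoint; the identity $j^{\ast} j_{\ast} = \id$ will then be automatic from the isometry property.

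First, I would apply Proposition \ref{Prop:adm}(i) to obtain an open tubular neighborhood $\U$ of $Y$ in $M$, together with the splitting $\A_{|Y} = \B \oplus \N$ from Proposition \ref{Prop:adm}(ii). Fixing a compatible Riemannian metric on $\A$, the $\A$-exponential map identifies $\U$ diffeomorphically with a disk bundle $\N_{\leq r} \subset \N$ for some $r > 0$. Using the product structure of a compatible metric near the boundary strata, the $\V$-volume form on $\U$ pulls back to a density of the form $J(y,v)\,dv_{\W}(y)\,dv_{\N_{y}}(v)$ for a smooth function $J$ that is strictly positive on $\N_{\leq r}$ (after possibly shrinking $r$, using compactness of $Y$).

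Second, I would fix a smooth cutoff $\chi \in C_c^{\infty}(\N)$ supported in $\N_{\leq r/2}$ and normalized so that
\[
\int_{\N_y} |\chi(v)|^2\, J(y,v)\,dv_{\N_y}(v) = 1 \qquad \text{for every } y \in Y,
\]
which is possible since $J$ is smooth and bounded away from zero on the support of $\chi$. Then define
\[
j_{\ast} u(x) := \begin{cases} \chi(v)\, u(y) & \text{if } x = \exp_y(v) \in \U, \\ 0 & \text{if } x \notin \U. \end{cases}
\]
A direct computation in tubular coordinates using the decomposition of the volume form gives
\[
\|j_{\ast}u\|_{L^2_{\V}(M)}^2 = \int_Y |u(y)|^2 \left(\int_{\N_y} |\chi(v)|^2 J(y,v)\,dv_{\N_y}\right) dv_{\W}(y) = \|u\|_{L^2_{\W}(Y)}^2,
\]
so $j_{\ast}$ is an isometry, in particular bounded.

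Third, I would define $j^{\ast} := (j_{\ast})^{\ast}$ as the Hilbert-space adjoint; it is then automatically bounded, and the isometry property of $j_{\ast}$ yields $j^{\ast} j_{\ast} = \id_{L^2_{\W}(Y)}$. In tubular coordinates the adjoint has the explicit form $j^{\ast} f(y) = \int_{\N_y} f(\exp_y(v)) \overline{\chi(v)}\, J(y,v)\,dv_{\N_y}(v)$, which is a smoothed restriction operator and hence a natural replacement for literal restriction (which fails to be $L^2$-continuous). The canonicity is modulo the choice of metric and cutoff; any two such choices yield operators differing by a smoothing remainder, which will not affect the calculus constructed later.

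The main obstacle is essentially geometric rather than analytic: verifying that the $\A$-exponential map produces a well-defined tubular diffeomorphism compatible with the $\V$-volume form, given that $Y$ and $M$ have corners and the vector fields in $\V$ and $\W$ are only tangent to the strata. This is handled by the tubular neighborhood theorem for Lie manifolds \cite[Theorem~2.7]{AIN} and the admissibility assumption in Definition \ref{Def:transv}; the remaining estimates are routine once the geometric setup is in place.
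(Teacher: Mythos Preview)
Your construction is correct and gives a valid generating pair, but it follows a genuinely different route from the paper. The paper starts from the \emph{literal} restriction operator $j^{\ast}\colon H_{\V}^{s}(M)\to H_{\W}^{s-\nu/2}(Y)$, which is not $L^2$-bounded, and repairs this defect by an order-reduction trick: one forms the invertible column $\left(\begin{smallmatrix}\Delta\\ j^{\ast}\end{smallmatrix}\right)\colon H_{\V}^{2}(M)\to L_{\V}^{2}(M)\oplus H_{\W}^{3/2}(Y)$, conjugates by elliptic order reductions $\lambda^{-2}$ and $\lambda_{\partial}^{3/2}$ to land in $L^2$, and then polar-decomposes a right inverse $C$ of the resulting boundary row to obtain $j_{\ast}=C(C^{\ast}C)^{-1/2}$ and $j^{\ast}=(C^{\ast}C)^{-1/2}C^{\ast}$. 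In contrast, you build $j_{\ast}$ directly as an isometric extension by a normalized fibre bump in the $\A$-normal bundle and take $j^{\ast}$ to be its adjoint.

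What each approach buys: your argument is more elementary and purely geometric; it needs only the tubular neighbourhood theorem and a Fubini computation, and avoids any pseudodifferential machinery. The paper's construction, however, is designed so that the resulting $j^{\ast}$ is genuinely a \emph{microlocalization of the pullback}: it is the actual restriction map pre- and post-composed with elliptic operators, hence has the same wavefront relation as restriction (this is the content of the Remark immediately following the proof and is what ties $j^{\ast}$ to the Lagrangian $\Lambda_b$ later). Your $j^{\ast}$, being integration against a smooth bump in the normal fibre, is smoothing in the conormal directions and does not share this microlocal profile. For the bare existence statement of the Proposition your proof is adequate; for the subsequent use of $(j_{\ast},j^{\ast})$ as prototypes of boundary and coboundary operators in the calculus, the paper's version is the relevant one. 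One small caveat: your claim that two choices of cutoff differ only by a smoothing remainder is not quite right as stated; the difference is again an operator of the same type, not a regularising one.
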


\begin{proof}
Let $j^{\ast}$ the restriction induced by pullback with $j$. By themselves these operators do not yet yield a generating pair. We modify them by use of a standard \textit{order reduction technique} as follows:\\
By \cite[Theorem 4.7]{AIN} this yields a continuous map
\[
j^{\ast} \colon H_{\V}^s(M) \to H_{\W}^{s - \frac{\nu}{2}}(Y), \qquad s>\frac{\nu}{2}
\]
where $H_{\V}^s(M)$ and $H_{\W}^{s'}(Y)$ are the $L^2$-based Sobolev spaces on $M$ and $Y$ respectively, defined by completion of $C_c^\infty$-functions.

We assume without loss of generality that the codimension of the embedding $j(Y)\subset M$ is constant $\nu = 1$ (otherwise we simply have to adapt the order reductions below accordingly).
Consider the bounded and invertible operator given by
\[
\begin{pmatrix} \Delta \\ j^{\ast} \end{pmatrix} \colon H_{\V}^2(M) \longrightarrow \begin{matrix} L_{\V}^2(M) \\ \oplus \\ H_{\W}^{\frac{3}{2}}(Y) \end{matrix}. 
\]

Here $\Delta$ is the Laplace operator associated to a fixed compatible metric $g$ on $M$. For the definition of the following order reductions in the Lie calculus we refer to \cite[Section 8]{B2}. 
Denote by $\lambda_{\partial}^{\frac{3}{2}}:H^s_\W(Y)\rightarrow H^{s-\frac{3}{2}}_\W(Y)$ an order reduction isomorphism of order $\frac{3}{2}$ on $Y$ and by $\lambda^{-2}$ an order reduction of order $-2$ on $M$. 
We obtain an isomorphism
\[
\begin{pmatrix} \Delta \lambda^{-2} \\ \lambda_{\partial}^{\frac{3}{2}} j^{\ast} \lambda^{-2} \end{pmatrix} \colon L_{\V}^2(M) \stackrel{\lambda^{-2}}{\longrightarrow} H^2_\V(M) \  \stackrel{\begin{pmatrix} \Delta \\ j^{\ast} \end{pmatrix}}{\longrightarrow}\ \begin{matrix} L_{\V}^2(M) \\ \oplus \\ H^{\frac{3}{2}}_{\W}(Y) \end{matrix} \ \stackrel{\id\times \lambda_\partial^{\frac{3}{2}}}{\longrightarrow} \ \begin{matrix} L_{\V}^2(M) \\ \oplus \\ L_{\W}^2(Y) \end{matrix}. 
\]

In particular the operator $B = \lambda_{\partial}^{\frac{3}{2}} j^{\ast} \lambda^{-2} \colon L_{\V}^2(M) \to L_{\W}^2(Y)$ has a right inverse which we denote by $C$. 
We check that the operator $C^{\ast} C$ is strictly positive: 
\[
\|v\|_{L_{\W}^2} = \|B C v\|_{L_{\W}^2} \leq \|B\|_{\L(L_{\V}^2, L_{\W}^2)} \|Cv\|_{L_{\V}^2}
\]

hence $\|Cv\| \geq c \|v\|$ for some $c > 0$. 
We set $S := (C^{\ast} C)^{-\frac{1}{2}}$ which is a $0$-order $\W$-pseudodifferential operator on $Y$. 
By an abuse of notation we now use the same symbol $j^{\ast}$ to denote the operator $S C^{\ast}$ and denote by $j_{\ast}$ the operator $CS$ which furnishes the desired generating pair. 
\end{proof}

\begin{Rem}
The proof shows in which sense $j^\ast$ is a microlocalization of the pullback by $j$. Taking the pullback results in a loss of Sobolev regularity, which can be formulated on a microlocal level by use of Sobolev wave front sets \cite{HHyp}. The order reductions, which are elliptic pseudo-differential operators, do not move singularities, and are used to restore this regularity. As such, $j^\ast$ is an operator with the same microlocally positioned singularities as the pullback by $j$, but of a different Sobolev strength.
\end{Rem}

\begin{Def}
Given an embedding $\j \in \EmbV$ we define the \emph{comparison algebra} $\Phi_{\V}(j)$ as the $C^{\ast}$-algebra generated by the set
\[
\left\{\begin{pmatrix} 0 & j_{\ast} \\ 0 & 0 \end{pmatrix}, \begin{pmatrix} 0 & 0 \\ 0 &  S \end{pmatrix} : S \in \Psi_{\W}(Y), \ (j^{\ast}, j_{\ast}) \ \text{generating pair}\right\}. 
\]
\label{Def:comparison}
\end{Def}

The association $\EmbV \ni j \mapsto \Phi_{\V}(j) \in C_b^{\ast}$ yields a covariant 
functor which is characterized, up to natural isomorphism, by certain universal properties. 
We specify next these minimal properties expected of a functor which yields a microlocalization of a given embedding in $\EmbV$. 

We fix the representation $\ast$-homomorphism $\varrho \colon \End\begin{pmatrix} C_c^{\infty}(\G) \\ \oplus \\ C_c^{\infty}(\H) \end{pmatrix} \to \End \begin{pmatrix} C^{\infty}(M) \\ \oplus \\ C^{\infty}(Y) \end{pmatrix}$ which is characterized by the property 
\begin{align}
& \left(\varrho(A) \circ \begin{pmatrix} r \\ r_{\partial} \end{pmatrix} \right) \begin{pmatrix} f \\ g \end{pmatrix} = A\begin{pmatrix} f \circ r \\ g \circ r_{\partial} \end{pmatrix}, \ f \in C^{\infty}(M), \ g \in C^{\infty}(Y) \label{repr}
\end{align}

where $r \colon \G \to M, \ r_{\partial} \colon \H \to Y$ denote the corresponding range maps. We denote by $\varrho_{|\H} \colon \End \ C_c^{\infty}(\H) \to \End \ C_c^{\infty}(Y)$ and by $\varrho_{|\G} \colon \End\ C_c^{\infty}(\G) \to \End \ C_c^{\infty}(M)$ the representation homomorphisms corresponding to $\H$ and $\G$ respectively.

\begin{Def}
A \emph{non-commutative completion} is a covariant functor from the category of admissible $C^{\infty}$-embeddings $\EmbV$ into the category of $C^{\ast}$-algebras and generalized morphisms, $C_b^{\ast}$.

Let $\Phi_{NC} \colon \EmbV \to C_b^{\ast}$ be a non-commutative completion. Then $\Phi_{NC}$ is called \emph{quantized} if it satisfies the following three properties:

\emph{i)} For each $j \colon (Y, \W) \hookrightarrow (M, \V)$ in $\EmbV$, the $\ast$-homomorphism $\varrho$, restricted to $\Phi_{NC}(j)$, induces a natural $C^{\ast}$-subalgebra of the bounded operators over $L_{\V}^2(M) \oplus L_{\W}^2(Y)$.
 
\emph{ii)} 
For each $j \colon (Y, \W) \hookrightarrow (M, \V)$ in $\EmbV$ there is a natural transformation $\lift_{qu} \colon \Emb_{\V} \to \LG_b$ associating to $Y$ a Lie groupoid $\H \rightrightarrows Y$ such that $\A_{\W} \cong \A(\H)$ and to $M$ a Lie groupoid $\G \rightrightarrows M$ such that
$\A_{\V} \cong \A(\G)$ as well as a natural transformation $\lift_{cl} \colon \Emb_{\V} \to \LA_b$ associating to $Y$ the Lie algebroid $\B \cong \A_{\W}$ and to $M$ the Lie algebroid $\A_{\V}$. In addition the diagram 
\begin{figure}[h]
\begin{tikzcd}
\EmbV\drar[bend right, "\lift_{qu}", swap]\rar[bend left, "\lift_{cl}"] 
 & \LA_b  \\
 & \LG_b \uar["\A^{\ast}"]
\end{tikzcd}
\end{figure}

commutes and consists of functors preserving Morita equivalence.

\emph{iii)} The representation $\varrho$ induces a surjective natural transformation $\varrho \colon \Phi_{NC} \to \Phi_{\V}$ such that for each $j \colon (Y, \W) \hookrightarrow (M, \V)$ and
corresponding groupoids $\G \rightrightarrows M, \ \H \rightrightarrows Y$ obtained via $\lift$ from \emph{ii)} the following diagram of generalized morphisms commutes:
\[
\xymatrix{
C(\H) \ar@{-->}[r]^{\Phi(j)} \ar@{-->}[d]_{\varrho_{|\H}} & C(\G) \ar@{-->}[d]_{\varrho_{|\G}} \\
C(Y, \W) \ar@{-->}[r]^{\Phi_{\V}(j)} & C(M, \V)
}
\]

where $C(\G), \ C(\H)$ denote the associated $C^{\ast}$-algebras.

\label{Def:nccompl}
\end{Def}

Let us explain the motivation for these three criteria and some implications:\\

The first property entails the continuity on the right class of Sobolev spaces, namely we require the operators in our algebra
to be $L^2$-bounded. 
The second condition is a factorization property over the category of Lie groupoids. This property is motivated
by the Muhly-Renault-Williams theorem, cf. \ref{SGb}, \cite{L}. 
The third property means that our algebra provides a solution to the Melrose quantization problem, cf. \cite{Mel2}. 
This problem is stated as follows: Given an arbitrary Lie structure $\V$ 
on a compact manifold with corners $M$, construct a pseudodifferential calculus naturally extending
the algebra of differential operators generated by $\V$, i.e. the universal enveloping algebra $\Diff_{\V}^{\ast}(M)$. 
Ammann, Lauter and Nistor in \cite{ALN} have constructed such a pseudodifferential calculus (the calculus $\Psi_{\V}(M)$ for a Lie manifold $(M, \V)$) and shown that there is a $\ast$-homomorphism from the pseudodifferential operators
$\Psi^{\ast}(\G)$ onto $\Psi_{\V}^{\ast}(M)$ which is surjective if $\G$ is a Lie groupoid integrating the Lie structure $\V$. 
Our condition three generalizes their approach by solving the following problem: Given an embedding of Lie manifolds (admissible, transverse) $j \colon (Y, \W) \hookrightarrow (M, \V)$ obtain a calculus of operators which yields a microlocalization of the embedding $j$ extending the algebra of differential
operators $\Diff_{\W}^{\ast}(Y)$ on the Lie manifold $(Y, \W)$. 
In the spirit of the Ammann, Lauter, Nistor representation theorem we request there to be a representation which in our case is
most easily expressed in terms of a generalized morphism of Lie groupoids and a calculus defined with regard to an embedding of Lie manifolds.
This condition is phrased succinctly in terms of axiom \emph{iii)} for a noncommutative completion. 

\begin{Rem}

The category $\EmbV$ of admissible embeddings of Lie manifolds is a category in two ways: The arrows consist of either the embeddings themselves $j \colon Y \hookrightarrow M$ or the corner correspondences $\varphi \colon \F(Y) \to \F(M)$,
which are bijective correspondences of all the faces of $Y$ and $M$. 
In the first case we obtain a semi-groupoid, i.e. a category internal to the category $\Cinfty$ of $C^{\infty}$-manifolds with corners.
This yields a bornological category or semi-groupoid with the objects given by isomorphy classes of Lie manifolds and the arrows 
given by transverse admissible embeddings of Lie manifolds. (Therefore this is a highly non-transitive category.) In the second case we obtain a groupoid structure with the objects given by 
$\mathcal{O}_M = \F(M)$, where $M$ runs over all isomorphy classes of Lie manifolds. Therefore in this second case we obtain a groupoid internal to the category of sets.
It is the second structure which is more interesting, since a corner correspondence is preserved by the functor $\lift \colon \EmbV \to \LG_b$ which associates
to a given corner correspondence the Morita equivalent Lie groupoids, integrating the corresponding Lie structures.
\label{Rem:nccompl}
\end{Rem}





\section{Examples: Lie structures}
\label{sec:lie}


Before constructing explicitly a non-commutative completion we first give examples of Lie structures and their integrating groupoids. 
There are several instances of manifolds with geometric singularities which can be modelled by non-compact manifolds endowed with a so-called Lie structure.
Let $(M, \A, \V)$ be a Lie manifold. By the results of Debord \cite{D} we can find an $s$-connected Lie groupoid $\G \rightrightarrows M$ such that $\A(\G) \cong \A$. Given a Lie groupoid $\G \rightrightarrows \Gop$ we denote by $\C_s \G$ the $s$-connected component of $\G$, i.e. the smallest groupoid with connected $s$-fibers which contains $\G$ as a subgroupoid. 
Alternatively, $\C_s \G$ is the union of the connected components of the $s$-fibers of $\G$. In general it is not clear for which Lie structure $\V$ it is possible to find an integrating groupoid with favorable properties, in particular Hausdorff and amenability properties.
It is therefore necessary to study particular cases of $\V$ and explicitely construct a suitable groupoid and orbit foliation associated to the Lie structure. This program is still underway, it has been solved for a few particular cases \cite{DLR}, \cite{LN}, \cite{M} and we also refer to \cite{DS2}
were a general framework is developed which includes constructions of minimal integrating Lie groupoids for many cases of Lie manifolds. We give below some examples for the case of embeddings of Lie manifolds which include the appropriate groupoid correspondences.

\begin{Exa}[The groupoid correspondence in the $b$-case]
Let $Y \hookrightarrow M$ be an admissible, transverse embedding of Lie manifolds with the Lie structure $\V_b$ of $b$-vector fields. 
Fix the boundary defining functions $\{p_i\}_{i \in I}$ of $M$ and $\{q_i\}_{i \in I}$ boundary defining functions of $Y$.
Note that by admissibility of the embedding $Y \hookrightarrow M$ we can assume that $I$ is a fixed (but finite) index set. 
Set $N = |I|$ and define
\begin{align}
\Gamma_b(M) &= \{(x, y, \lambda_1, \cdots, \lambda_N) \in M \times M \times (\Rr_{+})^N : p_i(x) = \lambda_i p_i(y), \ \forall i \in I\}. \label{bgrpd}
\end{align}
The structural maps are $s(x, y, \lambda) = y, \ r(x, y, \lambda) = x$, multiplication is given by $(x, y, \lambda) \circ (y, z, \mu) = (x, z, \lambda \cdot \mu)$ and inverse $(x, y, \lambda)^{-1} = (y, x, \lambda^{-1})$ with entrywise vector multiplication. 
Note that as a set $\Gamma_b(M)$ is 
\[
M_0 \times M_0 \cup \bigcup_{i \in I} F_i^2 \times (\Rr_{+})^{\codim(F_i)}
\]
The topology of the groupoid $\Gamma_b(M)$ can be described by specifying that interior sequences $M_0 \times M_0 \ni (x_n, y_n)$ converge towards a boundary element $(x, y, \lambda) \in \Gamma_b(M)$ if and only if
$x_n \to x, \ y_n \to y$ and $\frac{p_i(y_n)}{p_i(x_n)} \to \lambda_i, \ i \in I, \ n \to \infty$. 
The \emph{$b$-groupoid} is defined as the $s$-connected component of $\Gamma_b(M)$, i.e. $\G(M) := \C_s \Gamma_b(M)$. 

Fix the Lie algebroid $\A_M \to M$ such that $\Gamma(\A_M) \cong \V_b$. 
Then by \cite{M}, \cite{LN} we know that the $b$-groupoid $\G(M)$ integrates the Lie algebroid of the $b$-structure, i.e.
$\A(\G(M)) \cong \A_M$.

Define in the same way the corresponding $b$-groupoid $\G(Y) = \C_s \Gamma(Y)$ for the hypersurface $Y$ and set $Z := r^{-1}(Y) = \G_M^Y$, where $r$ is the range map of $\G(M)$. 
We describe in detail the generalized morphism $\G(Y) \dashrightarrow \G(M)$ implemented by $Z$. 
The right action of $\G(M)$ on $Z$ is given by right composition and the left action of $\G(Y)$ is given by left composition, i.e.
\[
\begin{tikzcd}[every label/.append style={swap}]
\G(Y) \ar[d, shift left] \ar[d] \ar[symbol=\circlearrowleft]{r} & \ar{dl}{p = r} Z \ar{dr}{q = s} & \ar[symbol=\circlearrowright]{l} \G(M) \ar[d, shift left] \ar[d] \\
Y & & M
\end{tikzcd}
\]

It is immediate to check that the actions commute and by definition the charge map $q$ induced by the source map of $\G(M)$ is a surjective submersion. 
The action of $\G(M)$ on its space of units is free and proper, hence the action of $\G(M)$ by right composition on $Z$ is free and proper.
Here properness means that the map $Z \ast \G(M) \to Z \times Z, \ (z, \gamma) \mapsto (z \gamma, z)$ is a homeomorphism onto
its image. Let $z \gamma = z$ then $q(z) = r(z) = r(\gamma)$ which implies composability and $s(\gamma) = q(z \gamma) = q(z)$.
Hence $\gamma = \id_{q(z)}$ which verifies that the action is free. The same can be proved for the left action under our assumption,
but we do not need this fact. Finally, we need to check that we have a diffeomorphism $Z / \G(M) \iso Y$ induced by the charge map $p$.
We first check this for the groupoids $\Gamma(M)$ and $\Gamma(Y)$, then we take the $s$-connected components which proves the assertion
for the groupoid $\G(M)$ and $\G(Y)$. We have to show that $p(z) = p(w)$ for $z, w \in Z$ if and only if there is a necessarily unique $\eta \in \G(M)$ such that $w = z \cdot \eta$.
Let $z = (x', y, (\lambda_i)_{i \in I}), \ w = (x', \tilde{y}, (\mu_i)_{i \in I})$ and set $\eta = \left(y, \tilde{y}, \left(\frac{\mu_i}{\lambda_i}\right)_{i \in I}\right)$. 
By definition of the topology of the groupoids fix the sequences $(x_n', y_n)$ such that $\frac{q_j(x_n')}{p_j(\tilde{y}_n)} \to \lambda_j, \ j \in I, \ n \to \infty$
and $(x_n', y_n)$ such that $\frac{q_j(x_n')}{p_j(y_n)} \to \mu_j, \ j \in I, \ n \to \infty$. Then $\eta \in \Gamma(M)$ since
\[
\frac{p_i(y_n)}{p_i(\tilde{y}_n)} = \frac{q_i(x_n')}{p_i(\tilde{y}_n)} \left(\frac{q_i(x_n')}{p_i(y_n)}\right)^{-1} \to \frac{\mu_i}{\lambda_i}, \ i \in I, \ n \to \infty.
\]
This concludes the proof of the isomorphism $Z / \G(M) \cong Y$. Hence we indeed obtain a generalized morphism $\G(Y) \dashrightarrow \G(M)$.
\label{Exa:bgrpd}
\end{Exa}

\begin{Exa}[Scattering Lie structure]
Let $M$ be a compact manifold with boundary endowed with the Lie structure of scattering vector fields, i.e. the module of vector fields $\V_{sc} = p \V_b$ where $p$ is the boundary defining function. In local coordinates where $x_1 = p$ the generators of these vector fields can be chosen as $\{x_1^2 \partial_{x_1}, x_1 \partial_{x_j}\}$, $j > 1$. 
An integrating groupoid in this case is written as a set 
\[
\G_{sc} = T_{\partial M} M\cup (M_0 \times M_0) \rightrightarrows M.
\]

Here the tangent bundle restricted to $\partial M$ is a viewed Lie groupoid which is glued to the pair groupoid on the interior.
If $M$ is a compact manifold with corners the scattering groupoid takes the form
\[
\G_{sc} = \left(\bigcup_{F \in \F_1(M)} T_{F} M \right) \cup (M_0 \times M_0) \rightrightarrows M.
\]

\label{Exa:scattering}
\end{Exa}

\begin{Exa}[Generalized cusp Lie structure]
On the compact manifold with corners $M$ we consider the Lie structure $\V_{c_n}$ of generalized cusps for $n \geq 2$, cf. \cite{LN}.
The local generators of vector fields of $\V_{c_n}$ are given by $\{x_1^n \partial_{x_1}, \partial_{x_2}, \cdots, \partial_{x_n}\}$.
Then an integrating groupoid is defined as
\[
\Gamma_{n}(M) = \{(x, y, \lambda) \in M \times M \times (\Rr_{+})^{I} : \lambda_i p_i(x)^n p_i(y)^n = p_i(x)^n - p_i(y)^n, \ \forall \ i \in I\}.
\]
With the structural maps defined in the same way as in the $b$-groupoid case. 
We set $\G_n(M) := \C_s \Gamma_n(M)$, the $s$-connected component of the groupoid $\Gamma_n(M)$.
Setting $Z := \G_n(M)_{M}^{Y}$ we define the right and left actions to obtain a generalized morphism $\G_{n}(Y) \dashrightarrow \G_n(M)$.
\label{Exa:cusp}
\end{Exa}

\begin{Exa}[Fibered cusp Lie structure]
Another interesting case is that of manifolds with iterated fibered boundary, in particular the following example is based on \cite{MM} and the definitions are from \cite{DLR}.
For the construction of the integrating Lie groupoid of a different type of fibered cusp Lie structure we refer to \cite{Guil}.  
We assume again that $M$ is a compact manifold with corners, but this time the boundary strata are assumed to be fibered in the following sense.
Let $\{F_i\}_{i \in I}$ the boundary hyperfaces of $M$ and denote by $\pi = (\pi_1, \cdots, \pi_N)$ an iterated boundary fibration
structure: There is a partial order defined on $\{F_i\}_{i \in I}$, $\pi \colon F_i \to B_i$ are fibrations where $B_i$ is the base,
a compact manifold with corners. 
Define the Lie structure
\[
\V_{\pi} := \{V \in \V_b : V_{|F_i} \ \text{tangent to fibers of} \ \pi_i \colon F_i \to B_i, \ V p_i \in p_i^2 C^{\infty}(M)\}
\]

where $\{p_i\}_{i \in I}$ denotes the boundary defining functions as usual.
Then $\V_{\pi}$ is a finitely generated $C^{\infty}(M)$-module and a Lie sub-algebra of $\Gamma^{\infty}(TM)$. 
The corresponding groupoid is amenable  \cite[Lemma 4.6]{DLR}; as a set it is defined as
\[
\G_{\pi}(M) := (M_0 \times M_0) \cup \left(\bigcup_{i = 1}^{N} (F_i \times_{\pi_i} T^{\pi} B_i \times_{\pi_i} F_i) \times \Rr\right), 
\]
where $T^{\pi} B_i$ denotes the algebroid of $B_i$.

One may then prove that $Z = \G_{\pi}(M)_{M}^{Y}$ implements a correspondence $\G_{\pi}(Y) \dashrightarrow \G_{\pi}(M)$. 
\label{Exa:phigrpd}
\end{Exa}

\section{Coste-Dazord-Weinstein groupoid}

In this section we recall the definition of the Coste-Dazord-Weinstein groupoid, in short CDW-groupoid, which is a groupoid structure on the cotangent bundle $T^{\ast} G$ of a given Lie groupoid $\G$. As such it is a natural space for microlocalization.  
For a detailed introduction to CDW-groupoids we refer to the work of Coste-Dazord-Weinstein, \cite{CDW} and
for a complementary definition to \cite{Mc}.


\begin{Def}
Let $\G \rightrightarrows \Gop$ be a Lie groupoid. Then $T^{\ast} \G \rightrightarrows \A^{\ast} \G$ is defined
as the groupoid with range and source maps given by
\[
\tilde{s}(\gamma, \xi) = (s(\gamma), dL_{\gamma}(\xi)), \ \tilde{r}(\gamma, \xi) = (r(\gamma), d R_{\gamma}(\xi)). 
\]

The inversion is given by
\[
(\gamma, \xi)^{-1} = (\gamma^{-1}, -(d i_{\gamma})^t \xi)
\]

and composition 
\[
(\gamma_1, \xi_1) \circ (\gamma_2, \xi_2) = (\gamma_1 \gamma_2, \xi) 
\]

with $\xi \in T_{\gamma_1 \gamma_2} \G$ such that
\[
\xi(dm(t_1, t_2)) = \xi_1(t_1) + \xi_2(t_2), \ (t_1, t_2) \in T \Gpull, \ \xi_1 \in T_{\gamma_1} \G, \ \xi_2 \in T_{\gamma_2} \G, \ (t_1, t_2) \in T_{(\gamma_1, \gamma_2)} \Gpull. 
\]

Whenever convenient we use from now on the notation $\Gamma := T^{\ast} \G$.

\label{Def:CDW}
\end{Def}

\begin{Rem}
We have a commuting diagram
\[
\xymatrix{
T^{\ast} \G \ar[d]_{\pi} \ar[r]^{\tilde{s}, \tilde{r}} & \A^{\ast}(\G) \ar[d]_{\overline{\pi}} \\
\G \ar[r]^{s, r} & \Gop 
}
\]

Additionally, the multiplication $m \colon \Gpull \to \G$ is compatible with the multiplication 
$\widetilde{m} \colon \Gamma^{(2)} \to \Gamma$, i.e. we have a commuting diagram
\[
\xymatrix{
\Gamma^{(2)} \ar[d]_{\pi \times \pi} \ar[r]^{\widetilde{m}} & \Gamma \ar[d]_{\pi} \\
\Gpull \ar[r]^{m} & \G
}
\]

In particular 
\[
N^{\ast} \Gpull \cong \ker \widetilde{m} \subset \Gamma^{(2)}
\]

cf. \cite{LMV}, Rem. 26. The groupoid $T^{\ast} \G = \Gamma$ has the structure of a vector bundle groupoid, in that
all structural maps are also vector bundle maps, see chapter 11 of \cite{Mc}. 
\label{Rem:CDW}
\end{Rem}

\begin{Thm}
The CDW-groupoid $T^{\ast} \G \rightrightarrows \A^{\ast}(\G)$ is a symplectic groupoid.
\label{Thm}
\end{Thm}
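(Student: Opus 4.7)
The plan is to exhibit on $T^{\ast}\G$ the canonical symplectic form $\omega = d\theta$ arising from the tautological 1-form $\theta$, and then to verify that the groupoid structure given in Definition 6.1 is compatible with it in the Lagrangian sense required by Definition 4.10.

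The main step is to identify the graph of $\widetilde{m}$ inside $T^{\ast}\G \times T^{\ast}\G \times (T^{\ast}\G)^{-}$ with the conormal bundle to the graph of $m$ in $\G \times \G \times \G$. Unwinding the defining relation $\xi(dm(t_1,t_2)) = \xi_1(t_1) + \xi_2(t_2)$, a triple $\bigl((\gamma_1,\xi_1),(\gamma_2,\xi_2),(\gamma_1\gamma_2,\xi)\bigr)$ lies in $\operatorname{graph}(\widetilde{m})$ precisely when, after flipping the sign of $\xi$ to account for the opposite symplectic structure on the third factor, the covector $(\xi_1,\xi_2,-\xi) \in T^{\ast}(\G \times \G \times \G)$ annihilates every vector tangent to $\operatorname{graph}(m)$. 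This is exactly the defining condition for $N^{\ast}\operatorname{graph}(m)$, and it recovers the identification $N^{\ast}\Gpull \cong \ker\widetilde{m}$ from Remark 6.2 as the special case $\xi = 0$. Since the conormal bundle of any submanifold is a Lagrangian submanifold of the ambient cotangent bundle, this shows that $\operatorname{graph}(\widetilde{m})$ is Lagrangian in $T^{\ast}\G \times T^{\ast}\G \times (T^{\ast}\G)^{-}$ with respect to $\omega \oplus \omega \oplus (-\omega)$, which is the key axiom.

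It remains to verify the remaining compatibility properties: that the source map $\tilde{s}$ is a Poisson submersion onto $\A^{\ast}(\G)$ with its canonical linear Poisson structure, that $\tilde{r}$ is anti-Poisson, that inversion is an anti-symplectomorphism, and that the unit embedding $\A^{\ast}(\G) \hookrightarrow T^{\ast}\G$ is Lagrangian. These follow formally from the Lagrangian character of the multiplication graph and from the vector bundle groupoid structure recalled in Remark 6.2, using standard arguments from \cite{CDW} and \cite{Mc}.

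The main obstacle is bookkeeping the sign conventions in the identification $\operatorname{graph}(\widetilde{m}) \cong N^{\ast}\operatorname{graph}(m)$: the cotangent direction at the product $\gamma_1\gamma_2$ enters with a reversed sign because of the factor $\omega^{-}$, and the pairing through $dm$ must be set up so that the cocycle condition becomes annihilation of $T\operatorname{graph}(m)$. Once this identification is carefully made, the proof reduces to the universal fact that conormal bundles are Lagrangian, a computation in local coordinates adapted to the embedding $\operatorname{graph}(m) \hookrightarrow \G \times \G \times \G$.
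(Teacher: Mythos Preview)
Your proposal is correct and follows essentially the same approach as the paper: both identify the graph of $\widetilde{m}$, up to a sign flip on the covector at the product, with the conormal bundle $N^{\ast}\mathrm{Gr}(m)$, and then invoke the fact that conormal bundles are Lagrangian. The only difference is cosmetic (you place the product in the third factor, the paper in the first) and that you go on to verify the auxiliary compatibilities (Poisson source/range, Lagrangian units, anti-symplectic inversion), which the paper's minimal Definition~\ref{Def:SGb} does not require and which indeed follow formally from the multiplicativity axiom as you note.
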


\begin{proof}
By the compatibility properties stated in Remark \ref{Rem:CDW} the CDW-groupoid is a Lie groupoid. Note that, as a cotangent bundle, the manifold $\Gamma = T^{\ast} \G$ carries a natural symplectic structure. This symplectic structure is compatible with the groupoid structure of $\Gamma$ which furnishes a symplectic groupoid, cf. \cite{CDW}.
Indeed, the graph $\mathrm{Gr}(\tilde{m})$ of $\tilde{m}$ is canonically isomorphic to the conormal space of the graph of $\mathrm{Gr}(m)$ of $m$ via 
$\mathrm{Gr}(\tilde{m}) \ni (\gamma, \xi, \gamma_1, \xi_1, \gamma_2, \xi_2) \mapsto (\gamma, -\xi, \gamma_1, \xi_1, \gamma_2, \xi_2) \in N^{\ast} \mathrm{Gr}(m)$. 
The symplectic structure on $T^{\ast} \G$ can be defined in such a way that $\mathrm{Gr}(\tilde{m})$ is a Lagrangian submanifold
of $\Gamma \times \Gamma \times \Gamma^{-}$, since $N^{\ast} \mathrm{Gr}(m)$ is a Lagrangian submanifold of $\Gamma \times \Gamma \times \Gamma$.  
\end{proof}

\begin{Exa}
\emph{(a)} Let $M$ denote a smooth manifold and $\G = M \times M \rightrightarrows M$ the pair groupoid.
Then $\A(M \times M) \cong TM$ and the CDW-groupoid $T^{\ast}(M \times M) \rightrightarrows T^{\ast} M$ has the following
structure 
\[
(T^{\ast} \G)^{(0)} = \A^{\ast}(\G) = \{(x, x, \xi, -\xi) \in T^{\ast} \G\} 
\]

with range and source maps given by
\[
\tilde{s}(x, y, \xi, \eta) = (y, y, -\eta, \eta), \ \tilde{r}(x, y, \xi, \eta) = (x, x, \xi, -\xi)
\]

composition
\[
(x, z, \xi, \zeta) \circ (z, y, -\zeta, \eta) = (x, y, \xi, \eta)
\]

and inverse given by
\[
(x, y, \xi, \eta)^{-1} = (y, x, -\eta, -\xi). 
\]

Note that the CDW-groupoid $T^{\ast}(M \times M)$ is canonically isomorphic to the pair groupoid $T^{\ast} M \times T^{\ast} M$
via the isomorphism $\pair \colon T^{\ast}(M \times M) \iso T^{\ast} M \times T^{\ast} M$ given by $(x, y, \xi, \eta) \mapsto (x, \xi, y, \eta)$. 

\emph{(b)} Let $M$ be a compact manifold with boundary. We denote by $\G := \G_b \rightrightarrows M$ the corresponding $b$-groupoid 
from Example \ref{Exa:bgrpd}. Recall that $\G$ as a set has the form 
\[
\G = M_0 \times M_0 \cup \partial M \times \partial M \times \Rr_{+}.
\]

Hence $\G$ is obtained by a glueing construction using a fixed boundary defining function $\rho \colon M \to \Rr_{+}$, 
where the pair groupoid on the interior is glued to the cylinder constructed out of the boundary $\partial M$ of $M$.
We have a pair groupoid structure on the interior and on the boundary we have the pair groupoid $\partial M \times \partial M$ and the
multiplicative group(oid) $(\Rr_{+}, \cdot)$. The corresponding CDW-groupoid $T^{\ast} \G \rightrightarrows \A^{\ast}(\G)$ can be constructed with the help of
the pair groupoid isomorphism defined in the previous example, applied to the interior and the boundary
separately. We denote by $\widetilde{\pair}$ the isomorphism of $T^{\ast} \G$ and the groupoid $\tilde{\Gamma} \rightrightarrows \A^{\ast}(\G)$
which is as a set given by 
\[
\tilde{\Gamma} = T^{\ast}(M_0) \times T^{\ast}(M_0) \cup T^{\ast}(\partial M) \times T^{\ast}(\partial M) \times T^{\ast}(\Rr_{+}).
\]
It is defined in the first set as the pair isomorphism applied to the interior cotangent bundle $T^{\ast}(M_0 \times M_0)$.
In the second set, it is the product of the pair isomorphism applied to $T^{\ast}(\partial M \times \partial M)$ with the 
identity group homomorphism $\id \colon (T^{\ast} \Rr_{+}, \cdot) \to (T^{\ast} \Rr_{+}, \cdot)$. \\
The CDW-groupoid $T^{\ast} \G \cong \tilde{\Gamma} \rightrightarrows \A^{\ast}(\G)$ has the following structure.
Note that the group $(\Rr_{+}, \cdot)$ has the associated Lie algebroid $A(\Rr_{+})$ which is the Lie algebra
$T_{1} \Rr_{+} \cong \Rr_{+}$.
One obtains the space of units $\A^{\ast}(\G) = \{(x, x, \xi, -\xi, 1, \lambda^{-1}) \in T^{\ast} \G\}$. The structural maps are given by 
\begin{align*}
& \tilde{s}(x_1, \xi_1, x_2, \xi_2, \lambda_1, \lambda_2) = (x_2, x_2, -\xi_2, \xi_2, 1, \lambda_2), \\
& \tilde{r}(x_1, \xi_1, x_2, \xi_2, \lambda_1, \lambda_2) = (x_1, x_1, \xi_1, - \xi_1, \lambda_1, 1),
\end{align*}
with composition
\[
(x_1, \xi_1, x_2, \xi_2, \lambda_1, \lambda_2) \circ (x_2, -\xi_2, x_3, \xi_3, \lambda_1^{-1}, \lambda_3) = (x_1, \xi_1, x_3, \xi_3, 1, \lambda_2 \lambda_3) 
\]
and inverse
\[
(x_1, \xi_1, x_2, \xi_2, \lambda_1, \lambda_2)^{-1} = (x_2, \xi_2, x_1, \xi_1, \lambda_1^{-1}, \lambda_2^{-1}).
\]

\label{Exa:CDW}
\end{Exa}

\subsection*{Induced symplectic actions}
We recall how any action of a Lie groupoid on a space induces canonically an action of the CDW-groupoid on the corresponding cotangent space. 
Fix a Lie groupoid $\G \rightrightarrows \Gop$ and a right principal $\G$-space $(Z, \alpha, q)$. 
We set $\Gamma := T^{\ast} \G \rightrightarrows \A^{\ast}(\G)$ for the Coste-Dazord-Weinstein groupoid
and set $\Gmod := T^{\ast} Z$. In the sequel we will endow $\Gmod$ with the structure of a $\Gamma$-space. 
To this end we define the charge map $\tilde{q} \colon \Gmod \to \A(\G)$ as $\tilde{q}(z, \xi) = (q(z), \overline{q}(\xi))$.
In order to define $\overline{q}$ we let $\xi \in T_z^{\ast} Z, \ x = q(z)$.
The extension of the linear form $\xi \circ d (L_z)_x \colon T_x \G_x \to \Rr$ by $0$ on the subspace $T_x \Gop$ of $T_x \G$
gives an element $\overline{q}(\xi)$ of $\A_x^{\ast}(\G) = (T_x \G / T_x \Gop)^{\ast}$. 
We also obtain the induced action $\tilde{\alpha} \colon \Gmod \ast_{\tilde{r}} \Gamma \to \Gmod$. 
The following Theorem concerns the functoriality of the association $\G \mapsto T^{\ast} \G$. A proof can be found in \cite{L2}. Nevertheless
we include some details to make the paper more self-contained.
\begin{Thm}
\emph{i)} The triple $(\Gmod, \tilde{\alpha}, \tilde{q})$ as specified above furnishes a principal right-$\Gamma$ space,
the charge map is given by $\tilde{q}(z, \xi) = (q(z), \overline{q}(\xi))$. 

\emph{ii)} We have a canonical isomorphism $(\Gmod)_{\tilde{\alpha}} \cong \N^{\ast} Z_{\alpha}$, i.e. $\Gmod$ 
has a symplectic structure as a $\Gamma$-space. 
\label{Prop:mod}
\end{Thm}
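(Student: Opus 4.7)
First I would verify that the charge map $\tilde q(z,\xi) = (q(z), \bar q(\xi))$ is a well-defined smooth surjective submersion. The key point is that $\bar q$ is a fibrewise linear map $T^{\ast}_z Z \to \A_{q(z)}^{\ast}\G$: it is the transpose of $(dL_z)_{\id_{q(z)}}\colon \A_{q(z)}\G \to T_z Z$, where $L_z\colon\G^{q(z)}\to Z,\ \gamma\mapsto z\gamma$ is the orbit map. Since the right action is principal, $L_z$ is an immersion and its differential at the identity is injective, so $\bar q$ is fibrewise surjective and hence $\tilde q$ is a submersion onto $\A^{\ast}\G$.

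Next I would define the action $\tilde\alpha$ by an explicit cotangent-lift formula. For $(z,\xi)\in\Gmod$ and $(\gamma,\eta)\in\Gamma$ with $\tilde r(\gamma,\eta)=\tilde q(z,\xi)$, set $(z,\xi)\cdot(\gamma,\eta)=(z\gamma,\zeta)$, where $\zeta\in T^{\ast}_{z\gamma}Z$ is characterised by
\[
\zeta\bigl(d\alpha_{(z,\gamma)}(v,w)\bigr) \;=\; \xi(v) + \eta(w), \qquad (v,w)\in T_{(z,\gamma)}(Z\ast_r\G).
\]
Surjectivity of $d\alpha_{(z,\gamma)}$ onto $T_{z\gamma}Z$ (because $\alpha$ is a submersion) gives uniqueness of $\zeta$; the compatibility $\bar q(\xi) = dR_\gamma\,\eta$ forces the right-hand side to vanish on $\ker d\alpha_{(z,\gamma)}$, so $\zeta$ is well defined. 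With $\tilde\alpha$ in hand, associativity and unitality reduce by direct unwinding of the defining relation to the corresponding identities for $\alpha$ and the CDW multiplication from Definition 6.1. For freeness, $(z,\xi)\cdot(\gamma,\eta) = (z,\xi)$ forces $z\gamma = z$, hence $\gamma = \id_{q(z)}$ by freeness of $\alpha$, and inserting this back into the defining equation yields $\eta = 0$. Properness is inherited from properness of $\alpha$ via the vector-bundle structures $\Gmod\to Z$ and $\Gamma\to\G$: the orbit map $\Gmod\ast_{\tilde r}\Gamma\to\Gmod\times\Gmod$ is fibrewise linear over the proper base map $Z\ast_r\G\to Z\times Z$.

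For (ii), I would recognise the conormal bundle $\N^{\ast}Z_\alpha$ as the zero level set of $\bar q$. By construction, $\bar q(\xi) = 0$ precisely when $\xi$ annihilates the image of $(dL_z)_{\id_{q(z)}}$, which is exactly the tangent space at $z$ to the $\G$-orbit through $z$; this identifies $\tilde q^{-1}(\text{zero section of }\A^{\ast}\G)$ with $\N^{\ast}Z_\alpha$ as a subset of $\Gmod$. To promote this to the symplectic statement, I would then identify the graph of $\tilde\alpha$ inside $\Gmod\times\Gamma\times\Gmod^-$ with the twisted conormal bundle $(N^{\ast}\Gr(\alpha))^{\prime}$ to the graph of $\alpha$, mirroring the argument for the CDW multiplication in Theorem 6.3. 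Since conormal bundles are automatically Lagrangian with respect to the canonical symplectic form, this furnishes the required Lagrangian property of $\Gr(\tilde\alpha)$ and hence symplecticity of the $\Gamma$-action on $\Gmod$.

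The main obstacle is the cotangent-lift bookkeeping in the construction of $\tilde\alpha$: verifying that $d\alpha_{(z,\gamma)}$ is surjective onto $T_{z\gamma}Z$ with the expected kernel, and that the compatibility $\bar q(\xi) = dR_\gamma\,\eta$ is \emph{precisely} the condition making $\zeta$ well-defined. Once this is established, the groupoid-action axioms, principality, and symplectic compatibility all follow formally from the twisted-conormal interpretation of graphs under the cotangent functor, and part (ii) drops out of the same identification.
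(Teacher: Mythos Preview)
Your proposal is correct and largely parallels the paper's own argument. Your definition of $\tilde\alpha$ via $\zeta\bigl(d\alpha_{(z,\gamma)}(v,w)\bigr) = \xi(v) + \eta(w)$ is exactly the paper's curve-based definition unwound: the paper writes $\langle\theta\cdot\xi, \dot w(0)\rangle = \langle\theta, \tfrac{d}{dt}\big|_0\, w(t)g(t)^{-1}\rangle + \langle\xi, \dot g(0)\rangle$ with $g$ an $s$-lift of $q\circ w$, which is your formula applied to the curve $t\mapsto(w(t)g(t)^{-1}, g(t))$ in $Z\ast_r\G$. You address principality (freeness and properness) more explicitly than the paper, which instead emphasises that $\tilde\alpha$ and $\tilde q$ are vector-bundle maps covering $\alpha$ and $q$, recorded in a sequence of commutative diagrams. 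For part (ii) there is a small but genuine difference: the paper shows that $\Gr(\tilde\alpha)$ is Lagrangian by first checking coisotropy via a local computation and then invoking a dimension count, whereas your identification of $\Gr(\tilde\alpha)$ with the twisted conormal $(N^\ast\Gr(\alpha))'$ yields the Lagrangian property for free and is the cleaner route---it is the direct analogue of the argument already used to show that the CDW groupoid itself is symplectic.
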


\begin{proof}
Consider the natural pairing $\scal{\cdot}{\cdot} \colon \A^{\ast}(\G) \times \A(\G) \to \Cc$ and rewrite the charge map $\tilde{q}$ 
as follows: $\scal{\tilde{q}(\theta)}{X} = \scal{\theta}{\xi_X^R}$ for $\theta \in T_{z}^{\ast} Z$, $X \in \A(\G)$. 
Here $\xi^R \colon \A(\G) \to TZ$ is defined as 
\[
\xi_X^R(z) := -z \frac{d \gamma(t)^{-1}}{dt|_{0}}, \ q(z) = x \in \Gop
\]

for $X$ given by $X = \frac{d \gamma(t)}{dt|_{0}} \in \pi^{-1}(x)$. Hence $\xi_X^R \in T_z Z$ is well-defined with
$\gamma(0) \in \Gop$ and $z \gamma(0) = z$, by definition of the $\G$-action on $Z$. 
The induced action $\widetilde{\alpha} \colon T^{\ast} Z \ast_{\tilde{r}} T^{\ast} \G \to T^{\ast} Z$ can be written as follows.
Let $\xi \in T_{\gamma}^{\ast} \G, \ \theta \in T_z^{\ast} Z$ such that $\tilde{r}(\xi) = \tilde{q}(\theta)$. Then by the definition of $\tilde{q}$
it follows that $r(\gamma) = q(z)$. Then $\widetilde{\alpha}(\theta, \xi) = \theta \cdot \xi \in T_{z \gamma}^{\ast} Z$ is given by
\[
\left \langle \theta \cdot \xi, \frac{d w(t)}{dt|_{0}} \right \rangle = \left \langle \theta, \frac{w(t) d g(t)^{-1}}{dt_{|0}}\right \rangle + \left \langle \xi, \frac{d g(t)}{dt_{|0}} \right \rangle 
\]

where $\frac{d w(t)}{dt_{|0}}$ is in $T_{z \gamma} Z$. Then $q(w(\cdot))$ is a curve in $\Gop$ such that $g(0) = \gamma, \ s(g(t)) = q(w(t))$. 
We can check that $\tilde{q}(\theta \cdot \xi) = \tilde{r}(\xi)$. Furthermore, the associativity property for actions holds.
To this end let $\xi \in T_{\gamma}^{\ast} \G, \ \widetilde{\xi} \in T_{\eta}^{\ast} \G$ and $\theta \in T_z^{\ast} Z$. 
We have that 
\begin{align*}
& \left \langle (\theta \cdot \xi) \cdot \widetilde{\xi}, \frac{d w(t)}{dt_{|0}}\right \rangle = \left \langle \theta \cdot \xi, \frac{w(t) d g(t)^{-1}}{dt_{|0}} \right \rangle + \left \langle \widetilde{\xi}, \frac{d g(t)}{dt_{|0}} \right \rangle \\ 
&= \left \langle \theta, \frac{w(t) g(t)^{-1} d h(t)^{-1}}{dt} \right \rangle + \left \langle \xi, \frac{dh(t)}{dt_{|0}} \right \rangle + \left \langle \widetilde{\xi}, \frac{d g(t)}{dt_{|0}} \right \rangle
\end{align*}

where the first line is obtained by definition of the multiplication in the CDW-groupoid and the second line by definition of the induced action. 
We have $h(0) = \eta, \ s(h(t)) = q(w(t) g(t)^{-1})$. Set $\tilde{g}(t) = g(t) h(t)$, then $\tilde{g}$ is the $s$-cover of $w(t)$,
i.e. $\tilde{g}(0) = \gamma \eta, \ s(\tilde{g}(t)) = q(w(t))$. In particular this yields the associativity, by the defining property
of the multiplication in the CDW-groupoid. 
Finally, we check that all structural maps of the CDW-action are vector bundle maps. In fact, fixing the projections $\pr \colon T^{\ast} Z \to Z$ and $\pr^{(0)} \colon \A^{\ast}(\G) \to \Gop$, 
we have the following commuting diagram
\[
\xymatrix{
N^{\ast}(Z \ast_r \G) \ar[d]_{\pr^2} \ar@{>->}[r] & T^{\ast} Z \ast_{\tilde{r}} T^{\ast} \G \ar[d]_{\pr^2} \ar@{->>}[r]^-{\tilde{\alpha}} & T^{\ast} Z \ar[d]_{\pr} \\
Z \ast_r \G \ar@{==}[r] & Z \ast_r \G \ar@{->>}[r]^{\alpha} & Z 
}
\]

Thus the action $\tilde{\alpha}$ extends the action $\alpha$ to a vector bundle map. If $Z = \G$ with the canonical self-action
we recover $\widetilde{m}$. Similarly, $\tilde{q}$ extends $q$ to a vector bundle map and the following diagram commutes
\[
\xymatrix{
\ker \tilde{q} \ar[d]_{\pr} \ar[r] & T^{\ast} Z \ar[d]_{\pr} \ar[r]^{\tilde{q}} & \A^{\ast}(\G) \ar[d]_{\prop} \\
Z \ar@{>->>}[r] & Z \ar@{->>}[r]^{q} & \Gop.
}
\]

Altogether, the induced action $\tilde{\alpha}$ makes the following diagram commute
\[
\xymatrix{
N^{\ast}(Z \ast_r \G) \ar@{>->}[r] & T_{|Z \ast_r \G}^{\ast}(Z \times \G) \ar@{->>}[r] & T^{\ast}(Z \ast_r \G) \\
\ker \tilde{\alpha} \ar[d]_{(\alpha, 0)} \ar@{>->>}[u] \ar@{>->}[r] & T^{\ast} Z \ast_{\tilde{r}} T^{\ast} \G \ar[d]_{\tilde{\alpha}} \ar@{>->}[u] \ar@{->>}[r] & (\ker d \alpha)^{\perp} \ar[u] \ar[d]_{(\alpha, (d^t \alpha)^{-1})} \\
Z \times \{0\} \ar@{>->}[r] & T^{\ast} Z \ar@{>->>}[r] & T^{\ast} Z 
}
\]

\emph{ii)} The graph of $(\theta, \xi) \mapsto \theta \cdot \xi$ is coisotropic in $T^{\ast} \G \times T^{\ast} Z \times T^{\ast} Z^{-}$
as can be checked by a local computation. By a dimension count one can check that we obtain a Lagrangian submanifold. Hence the induced action is in particular symplectic. 
\end{proof}

The association of a CDW groupoid to a Lie groupoid given above can therefore be implemented as a covariant functor from the
category of Lie groupoids $\LG_b$ to the category of symplectic groupoids $\SG_b$ together with the generalized tensor product
as composition, as introduced in section \ref{SGb}. 
This functor preserves Morita equivalence, in the sense that Morita equivalent Lie groupoids are sent to Morita
equivalent symplectic groupoids. 

\section{Fourier integral operators}

\label{FIO}

As a last preparatory step we now introduce the class of Fourier integral operators on Lie groupoids introduced by Lescure, Vassout and Manchon (\cite{LMV},\cite{LV}). These will provide an adequate framework for our microlocalization of an embedding. This is in complete analogy to the fact that the kernels of the operators in \cite{NS} are regarded as Fourier integral operators. In particular we present analogous notions of what is called $\G$ FIO's and $\G$ FFIO's in \cite{LV} in the case of principal $\G$-actions. 
We note that due to the availability of Kohn-Nirenberg quantization, many of the usual challenges of the theory can be circumvented by our choices of global phase functions ``of Fourier type''. However, we present the theory here in greater generality for sake of completeness and comparability with the results available in the literature.\\

Besides the introduction of the convolvability conditions for Lagrangians, we also recall the definition of the twisted product of
symbols from \cite{LV} and extend it to groupoid actions. We formulate the definition within the framework of the Maslov quantization
and the corresponding quantization of symbols. In the next section we specialize to the case of the Kohn-Nirenberg quantization
of symbol spaces which are contained in the standard symbol spaces used in the present section.

\subsection{Generalized morphisms}

\label{genmorph}

Let $M$ be a compact manifold with corners and let $Y \hookrightarrow M$ be a codimension $\nu$ embedding of manifolds with corners in the category $\EmbV$.
Fix a Lie groupoid $\G \rightrightarrows M$ such that $Y$ is transversal also with regard to $\G$ and set $\H := \G_Y^Y$. There is a groupoid correspondence $\H \dashrightarrow \G$ which is implemented by $Z = \G_M^Y = r^{-1}(Y)$. 
The corresponding actions are denoted as follows.

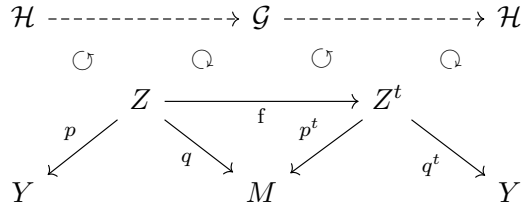
\begin{figure}[h]
\[
\begin{tikzcd}[every label/.append style={swap}]
\H \ar[dashed]{rr} \ar[symbol=\circlearrowleft]{rd} & & \G \ar[dashed]{rr} \ar[symbol=\circlearrowright]{ld} \ar[symbol=\circlearrowleft]{rd} & & \H \ar[symbol=\circlearrowright]{ld} \\[-2\jot]
& \ar{ld}{p} Z \ar{rr}{\flip} \ar{rd}{q} & & \ar{ld}{p^t} \Zop  \ar{rd}{q^t} & \\
Y & & M & & Y 
\end{tikzcd}
\]
\label{fig:main}
\caption{The Lie groupoid actions associated to an embedding.}
\end{figure}

The left and right actions are given respectively by left and right composition.
Note that the left hand side yields a morphism in $\LG_b$ and the right hand side a morphism in the opposite category
$\LG_b^{op}$.

We have induced actions of the corresponding CDW groupoids using Proposition \ref{Prop:mod}.

\begin{figure}[h]
\[
\begin{tikzcd}[every label/.append style={swap}]
T^{\ast} \H \ar[dashed]{rr} \ar[symbol=\circlearrowleft]{rd} & & T^{\ast} \G \ar[dashed]{rr} \ar[symbol=\circlearrowright]{ld} \ar[symbol=\circlearrowleft]{rd} & & T^{\ast} \H \ar[symbol=\circlearrowright]{ld} \\[-2\jot]
& \ar{ld}{\widetilde{p}} T^{\ast} Z \ar{rr}{d\flip} \ar{rd}{\widetilde{q}} & & \ar{ld}{\widetilde{p}^t} T^{\ast} \Zop  \ar{rd}{\widetilde{q}^t} & \\
\A^{\ast}(\H) & & \A^{\ast}(\G) & & \A^{\ast}(\H) 
\end{tikzcd}
\]
\label{fig:main2}
\caption{The induced CDW-groupoid actions associated to an embedding.}
\end{figure}

The left hand side is a morphism in $\SG_b$ and the right hand side a morphism in the opposite category $\SG_b^{op}$.

We denote by $\O = (\O_x)_{x \in M}$ the orbits of the Lie groupoid $\G \rightrightarrows M$ and by $\O^{\partial} = \{\O_y^{\partial}\}_{y \in Y}$ the orbits of $\H \rightrightarrows Y$. 
The fibers are defined by $\O_y^{\partial} := r(s^{-1}(y)) = s(r^{-1}(y)), \ y \in Y$ and $\O_x := r(s^{-1}(x)) = s(r^{-1}(x)), \ x \in M$.
We obtain a singular foliation $\F_M \to M$ by the orbits of the groupoid $\G$ and a singular foliation $\F_Y \to Y$ by the orbits of the
subgroupoid $\H$. 
We can lift these foliations by use of the range / source map to obtain a foliation $\F_{\G} \to \G$ and $\F_{\H} \to \H$, see also \cite{LV}.
Since $\H \dashrightarrow \G$ is a generalized morphism, we have the identification of the quotient $Z / \G \iso Y$ via the anchor map $p$.
Fix the quotient map $Q \colon Z \to Z / \G$. 
The orbits $\tilde{\O}_x := p(q^{-1}(x)) \subset Y, \ x \in M$ induce a foliation $\tilde{\F} \to Y$ which can be lifted
via the quotient map $Q$ to a foliation $\F_Z \to Z$ of $Z$. 
Given an admissible embedding $j \in \EmbV$ we have a natural lifting $\lift(j)$ which yields a generalized morphism $\H \dashrightarrow \G$ of the corresponding $s$-connected integrating Lie groupoids
$\H$ of $Y$ and $\G$ of $M$. 

\begin{Lem}
Given an admissible embedding $j \in \EmbV$, then $\lift(j)$ yields a Morita equivalence between $\H$ and $\G$. Furthermore, by functoriality we obtain the induced Morita equivalence between $T^{\ast} \H$ and $T^{\ast} \G$. 
\label{Lem:lift}
\end{Lem}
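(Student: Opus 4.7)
The plan is to verify that the bibundle $Z = \G_M^Y = r^{-1}(Y)$ satisfies all five axioms for a Morita equivalence in Definition \ref{Def:morph}, and then deduce the cotangent-level statement by applying the functoriality of the CDW construction (Proposition \ref{Prop:mod} and the discussion closing Section 6) which preserves Morita equivalence. Three of the axioms are essentially formal consequences of groupoid algebra: the right $\G$-action by composition is free (if $z\cdot\gamma = z$ then by inverting $z$ on the left, $\gamma$ must be a unit) and proper (the map $(z,\gamma)\mapsto(z,z\gamma)$ inherits properness from $\G$), and the two actions commute by associativity. Analogous reasoning handles the freeness and properness of the left $\H$-action in axiom (iv).

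Next I would verify that the charge maps $p = r|_Z \colon Z \to Y$ and $q = s|_Z \colon Z \to M$ are surjective submersions. Both are restrictions of the groupoid's range/source maps to $r^{-1}(Y)$; for $p$ surjectivity onto $Y$ is clear (units supply preimages) and the submersion property follows from that of $r$. For $q$, the relevant point is that the submanifold $r^{-1}(Y)$ meets each $s$-fiber transversely, which is exactly ensured by the transversality of $j$ and the admissibility of the embedding (Definition \ref{Def:transv}) since the examples in Section \ref{sec:lie} show that the $\G$-orbits are precisely the open strata of the face stratification.

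Axioms (ii) and (v) then remain: $p$ induces a diffeomorphism $Z/\G \cong Y$ and $q$ induces $\H\backslash Z \cong M$. Bijectivity of the first is the content of the last paragraph of Example \ref{Exa:bgrpd} in a prototypical case: two elements $z_1,z_2 \in Z$ with $r(z_1)=r(z_2)$ differ by the unique arrow $z_1^{-1}z_2 \in \G$, which lies in $\G$ since $s(z_1),s(z_2)\in M$. The symmetric argument gives injectivity of $\H\backslash Z \to M$: two elements with the same source in $M$ differ by an element of $\G$ whose range and source both lie in $Y$, hence in $\H = \G_Y^Y$. Smoothness of the inverses uses that $p,q$ are submersions.

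The main obstacle will be the surjectivity of the induced map $\H\backslash Z \to M$ in axiom (v), which is equivalent to the assertion that $Y$ is a \emph{complete transversal} for $\G$, i.e.\ every $\G$-orbit in $M$ meets $Y$. This is precisely where the combinatorial hypothesis of admissibility enters: the bijection $\varphi(j) \colon \F(M) \to \F(Y)$ forces every boundary face of $M$ to be hit by $Y$, and combined with the identification of the $\G$-orbits with the open strata (as illustrated in the examples of Section \ref{sec:lie}) we conclude that $s(r^{-1}(Y)) = M$. Once this first Morita equivalence $\H \simM \G$ in $\LG_b$ is established, the second statement is immediate: Proposition \ref{Prop:mod} promotes the bibundle $Z$ to the symplectic bibundle $T^\ast Z$ realising $T^\ast \H \simM T^\ast \G$ in $\SG_b$, and by Proposition \ref{Prop:Morita2} this is the asserted Morita equivalence of symplectic groupoids.
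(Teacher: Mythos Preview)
Your argument is correct and reaches the same conclusion through the same essential mechanism as the paper: admissibility forces the face bijection $\varphi(j)$, faces correspond to groupoid orbits, hence $Y$ is a complete transversal for $\G$, and this is what upgrades the generalized morphism to a Morita equivalence; the cotangent statement then follows from the functoriality of the CDW construction (Proposition~\ref{Prop:mod}).

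The presentation differs somewhat. You work axiom by axiom through Definition~\ref{Def:morph}, checking freeness, properness, the submersion conditions on $p,q$, and the two quotient identifications $Z/\G \cong Y$ and $\H\backslash Z \cong M$ separately. The paper's proof is much terser: it argues directly via the orbit foliations $\F_{\Gop}$, $\F_{\Hop}$ and their lifts, asserting that the face--orbit identification and the admissibility hypothesis put these foliations (and their lifts $\F_\G$, $\F_\H$, $\F_Z$) in bijective correspondence, and takes this as equivalent to Morita equivalence without spelling out the individual bibundle axioms. Your approach has the advantage of being self-contained and making explicit exactly where each hypothesis is consumed (transversality for the submersion property of $q$, admissibility for surjectivity onto $M$); the paper's approach is more conceptual but leaves the reader to unpack why the foliation correspondence suffices. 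Both rest on the same unproved-in-general but example-supported identification of $\G$-orbits with open face strata.
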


\begin{proof}
Since $j$ is admissible the corresponding faces of $Y$ and $M$ are in one-to-one correspondence. The faces of $Y$ and $M$ in turn correspond bijectively to the orbits of the groupoid $\H$ and $\G$ respectively.
Therefore the foliations $\F_{\Hop}$ and $\F_{\Gop}$ are in one-to-one correspondence with the faces of $Y$ and $M$ respectively. Also the lifted foliations $\F_{\G}$ and $\F_{\H}$ are in bijective correspondence with $\F_Z$.
Hence the generalized morphism $\lift(j)$ implemented by $Z$ is a Morita equivalence.  The combined structure is displayed in Figure \ref{fig:summary} where the last assertion follows since the functor $\CDW \circ \lift$ induces a Morita equivalence between $T^{\ast} \H$ and $T^{\ast} \G$. Here $\CDW$ associates to a given Lie groupoid $\G$ the corresponding CDW-groupoid $T^{\ast} \G$ on the level of objects of the categories
$\LG_b$ and $\SG_b$. The induced map on the level of arrows is described by Theorem \ref{Prop:mod}. In particular, we obtain induced symplectic foliations $\F_{T^{\ast} \G}$ and $\F_{T^{\ast} \H}$ that are 
in one-to-one correspondence with $\F_{T^{\ast} Z}$. 
\end{proof}

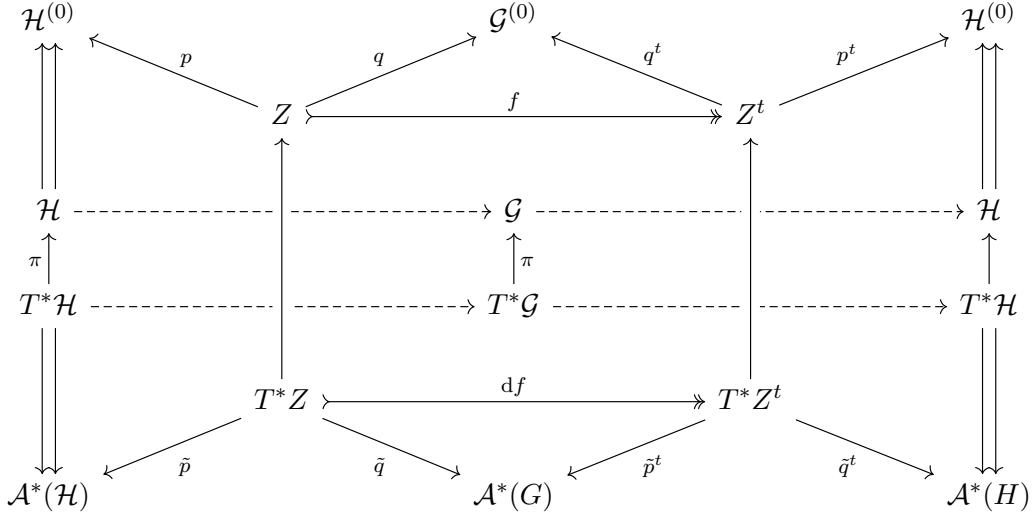
\begin{figure}[H]
      \centering
     \begin{tikzcd}[column sep=huge, text height=1.5ex, text depth=0.25ex]
 \H^{(0)}  & 																				& \G^{(0)} 	&	 	& \H^{(0)}		\\
  	& Z	\arrow[swap]{ul}{p} \arrow{ur}{q} \arrow[two heads, tail]{rr}{f}	& 		& Z^t \arrow[swap]{ul}{q^t} \arrow{ur}{p^t}	& 		\\
 \H \arrow[swap,shift right=2.5pt]{uu} \arrow[shift left=2.5pt]{uu} \arrow[dashed]{rr}	& 																				& \G	\arrow[dashed]{rr}	&		& \H	\arrow[swap,shift right=2.5pt]{uu} \arrow[shift left=2.5pt]{uu}	\\
 T^*\H	\arrow[swap,shift right=2.5pt]{dd} \arrow[shift left=2.5pt]{dd} \arrow[dashed]{rr} \arrow{u}{\pi}&																				& T^*\G	\arrow[dashed]{rr} \arrow[swap]{u}{\pi}	&		& T^*\H	\arrow[swap,shift right=2.5pt]{dd} \arrow[shift left=2.5pt]{dd} \arrow{u}	\\
 	& T^*Z	\arrow[crossing over]{uuu} \arrow{dl}{\tilde p} \arrow[swap]{dr}{\tilde q} \arrow[two heads, tail]{rr}{\dd f}																			&		& T^*Z^t	\arrow[crossing over]{uuu} \arrow{dl}{\tilde p^t} \arrow[swap]{dr}{\tilde q^t}	&		\\
 \A^*(\H)	&																				& \A^*(G)	&		& \A^*(H)
     \end{tikzcd}
\caption{Summarizing diagram.}
\label{fig:summary}
\end{figure}

\subsection{Convolution of Lagrangians}

Fourier integral operators on groupoids are equivariant families of operators whose kernels are associated to Lagrangian submanifolds. Their composability is ensured by geometric assumptions on the involved Lagrangians, see \cite{LV}. 

In order to obtain an algebra we therefore have to assure the composability of the involved Lagrangians as in the classical theory outlined in Section \ref{section:KN}. 
We start from the setup in subsection \ref{genmorph}, i.e. the bibundle correspondences of the Lie- and CDW-groupoids. 
We fix Lagrangian submanifolds $\Lambda_1 \subset T^{\ast} Z \setminus 0$ and $\Lambda_2 \subset T^{\ast} \G \setminus 0$. Note that in our case the Lagrangians are embedded which avoids the technicality of immersed (local) Lagrangians. Based on \cite{LV} a sufficient condition for convolution of $\Lambda_1$ and $\Lambda_2$ is that $\tilde{\Lambda} := \Lambda_1 \times \Lambda_2 \subset \Gmod \times \Gamma$ is a local Lagrangian submanifold which intersects $\Gmod \ast_{\tilde{r}} \Gamma$ cleanly.
Then $\Lambda := \tilde{\alpha}(\tilde{\Lambda} \cap \Gmod \ast_{\tilde{r}} \Gamma)$ is a local Lagrangian submanifold. This gives rise
to the following definition.

\begin{Def}
The Lagrangians $\Lambda_1, \Lambda_2$ are \emph{cleanly convolvable} if $\Lambda_1 \times \Lambda_2$ cleanly intersects $\Gmod \ast_{\tilde{r}} \Gamma$.
The convolution product in that case is defined by 
\[
\Lambda_1 \circ \Lambda_2 = \tilde{\alpha}(\Lambda_1 \times \Lambda_2 \cap \Gmod \ast_{\tilde{r}} \Gamma). 
\]
\label{Def:conv}
\end{Def}

The other condition which we impose is the following \emph{no-zero condition} and called \emph{admissibility} in \cite{LV}. 

\begin{Def}
\emph{i)} Let $\Lambda \subset T^{\ast} \G \setminus 0$ be a Lagrangian. Then $\Lambda$ is called \emph{admissible} if 
$\Lambda \cap \ker \tilde{s} = \Lambda \cap \ker \tilde{r} = \emptyset$. 

\emph{ii)} Let $\Lambda \subset T^{\ast} Z \setminus 0$ be a Lagrangian. Then $\Lambda$ is called \emph{admissible} if 
$\Lambda \cap \ker \tilde{q} = \emptyset$. 
\label{Def:adm}
\end{Def}

\begin{Exa}
We verify these conditions for the Lagrangians from Section \ref{section:KN}. 
Let $M$ be a compact manifold (without corners) and let $j \colon Y \hookrightarrow M$ be an embedding. 
Then, using the identifications from Example \ref{Exa:CDW} (a), we note that the natural action of the CDW groupoid $T^{\ast} (M \times M)$ on $T^{\ast} (Y \times M)$ yields that
$\Lambda_b$ and $\Lambda_g$ are cleanly convolvable. i.e. $\Lambda_b \times \Lambda_g \cap T^{\ast}(Y \times M) \ast_{\tilde{r}} T^{\ast}(M \times M)$ is clean. 
Also $\Lambda_b \cap \ker \tilde{q} = \emptyset$ and hence $\Lambda_b$ is admissible. Similarly, we check that $\Lambda_g \subset T^{\ast}(M \times M) \setminus 0$ is admissible. 
\label{Exa:adm}
\end{Exa}

\begin{Lem}
\emph{i)} $\Lambda_1$ is admissible if and only if $\alpha_x^{\ast} \Lambda_1 \subset (T^{\ast} Z_x \setminus 0) \times (T^{\ast} \G^x \setminus 0)$ is a Lagrangian
submanifold for each $x \in \Gop$.

\emph{ii)} $\Lambda_2$ is admissible if and only if $m_x^{\ast} \Lambda_2 \subset (T^{\ast} \G_x \setminus 0) \times (T^{\ast} \G^x \setminus 0)$ is 
a Lagrangian submanifold for each $x \in \Gop$.

\label{Lem:adm}
\end{Lem}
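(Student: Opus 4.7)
Both parts are transversality statements for the pullback of a Lagrangian along a groupoid-theoretic multiplication or action map. I treat (ii) in detail; (i) will then follow by the same reasoning, with the induced symplectic action $\widetilde{\alpha}$ from Theorem~\ref{Prop:mod} replacing $\widetilde{m}$ and the principality of the right $\G$-action collapsing the two kernel conditions (on $\widetilde{s}$ and $\widetilde{r}$) into the single condition $\Lambda_1\cap\ker\widetilde{q}=\emptyset$.

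The first step for (ii) is to unpack $m_x^*\Lambda_2$. By Remark~\ref{Rem:CDW} we have $\ker\widetilde{m}\cong N^*\Gpull$, so the graph of $\widetilde{m}$ is canonically the conormal to the ordinary multiplication. Hence $m_x^*\Lambda_2$ is the fiberwise composition $\Lambda_2\circ\mathrm{Gr}(\widetilde{m})$ at $x\in\Gop$, sitting inside $T^*\G_x\times T^*\G^x$. Using the definitions $\widetilde{s}(\gamma,\xi)=(s(\gamma),dL_\gamma^*\xi)$ and $\widetilde{r}(\gamma,\xi)=(r(\gamma),dR_\gamma^*\xi)$, a direct tangent-space chase shows that for $(\gamma,\xi)\in\Lambda_2$ with $\gamma=\gamma_1\gamma_2$, the image $dm_x^*\xi=(\xi_1,\xi_2)$ satisfies $\xi_1=0$ iff $(\gamma,\xi)\in\ker\widetilde{r}$ and $\xi_2=0$ iff $(\gamma,\xi)\in\ker\widetilde{s}$.

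This immediately gives the converse: if $\Lambda_2$ meets $\ker\widetilde{s}$ or $\ker\widetilde{r}$, then $m_x^*\Lambda_2$ contains a point with a vanishing cotangent component and so cannot be contained in $(T^*\G_x\setminus 0)\times(T^*\G^x\setminus 0)$. For the forward direction, admissibility precisely rules out these degeneracies, and a clean-composition argument---using that $\Lambda_2\subset T^*\G$ is Lagrangian and $\mathrm{Gr}(\widetilde{m})\subset\Gamma\times\Gamma\times\Gamma^-$ is Lagrangian by Theorem~\ref{Thm}---shows that $m_x^*\Lambda_2$ is an embedded Lagrangian of the expected dimension $\dim\G_x+\dim\G^x$.

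The main technical hurdle I expect is the clean-composition step itself: verifying that, under admissibility, the pertinent intersection is transverse (or at worst clean of excess zero), so that $m_x^*\Lambda_2$ is a smooth embedded submanifold of the correct dimension rather than merely a set-theoretic Lagrangian. This will require a pointwise symplectic analysis at a composable triple, combining the defining relation of $\widetilde{m}$ with the fiber structure of $s$ and $r$; the identification $\ker\widetilde{m}\cong N^*\Gpull$ should be the key tool for making the dimension count work out cleanly, since it translates "non-transverse" directions into precisely the kernel conditions ruled out by admissibility.
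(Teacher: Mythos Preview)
Your plan is essentially the paper's argument unpacked. The paper's proof is two sentences: it writes down $\alpha_x^*\Lambda_1$ explicitly as
\[
\alpha_x^{\ast} \Lambda_1 = \{(z, \xi, \gamma, \eta) \in T^{\ast} Z_x \times T^{\ast} \G^x : \exists_{(\tilde{z}, \tilde{\xi})\in\Lambda_1}\ z \gamma = \tilde{z},\ (d \alpha_x)_{z, \gamma}^t(\tilde{\xi}) = (\xi, \eta)\},
\]
observes that $\alpha:Z\ast\G\to Z$ is a submersion, and cites Proposition~13 of \cite{LV}. Your identification of the vanishing of the two cotangent components with the kernel conditions on $\tilde r,\tilde s$ (resp.\ $\tilde q$) is exactly the mechanism behind that cited result.

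One simplification you miss: the ``main technical hurdle'' you flag is largely dissolved by the submersion observation. Since $m$ (resp.\ $\alpha$) is a submersion, the pullback of a conic Lagrangian along it is automatically a smooth conic Lagrangian, so there is no clean-intersection condition to verify at that stage; this is why the paper simply records the submersion property and defers to \cite{LV}. Your reformulation via composition with $\mathrm{Gr}(\tilde m)$ is correct but inserts an extra abstraction that the direct pullback-by-submersion argument avoids. Separately, your claim that principality ``collapses'' the two kernel conditions into the single condition $\Lambda_1\cap\ker\tilde q=\emptyset$ for part~(i) is asserted without justification: the vanishing of the $T^*Z_x$-component corresponds to $\tilde\xi$ annihilating the $q$-vertical tangent space $T_{\tilde z}Z_{q(\tilde z)}$, which is not the same as membership in $\ker\tilde q$. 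This point deserves care, though the paper itself is equally terse here and leaves the details to \cite{LV}.
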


\begin{proof}
By definition 
\[
\alpha_x^{\ast} \Lambda_1 = \{(z, \xi, \gamma, \eta) \in T^{\ast} Z_x \times T^{\ast} \G^x : \exists_{(\tilde{z}, \tilde{\xi})} \ z \gamma = \tilde{z}, \ (d \alpha_x)_{z, \gamma}^t(\tilde{\xi}) = (\xi, \eta)\}.
\]

Where $d \alpha \colon T(Z \ast \G) \to TZ$ and since $\alpha \colon Z \ast \G \to Z$ is a submersion the assertion follows 
as in the proof of Proposition 13 of \cite{LV}. 
\end{proof}

We fix the projections $\tilde{\pr} \colon T^{\ast} (Z \ast \G) \to Z \ast \G$ and $\tilde{\pi} \colon Z \ast \G \to Y$.
In the second case we have the inclusion $Y \ni y \mapsto (\id_y, \id_y) \in \Hpull \subset Z \ast \G$ and hence $Y$ is an immersed submanifold
with regard to the identity inclusion. Similarly, we fix the projections $\pr \colon T^{\ast} \Gpull \to \Gpull$ and $\pi \colon \Gpull \to M$.
Where the second projection stems from the identity inclusion $M \ni x \mapsto (\id_x, \id_x) \in \Gpull$. 

\begin{Def}
\emph{i)} The Lagrangian $\Lambda_1$ is \emph{transverse} to $\tilde{\pi} \colon Z \ast \G \to Y$ if $\tilde{\pi} \circ \tilde{\pr}_{|\alpha^{\ast}(\Lambda_1)}$
is a submersion.

\emph{ii)} The Lagrangian $\Lambda_2$ is \emph{transverse} to $\pi \colon \Gpull \to M$ if $\pi \circ \pr_{|m^{\ast}(\Lambda_2)}$ is a submersion.
\label{Def:transverse}
\end{Def}

Define $c_{\gamma} \colon \G_x \times \G^x \to \G_y \times \G^y, \ \gamma \in \G_y^x$ by $c_{\gamma}(\gamma_1, \gamma_2) = (\gamma_1 \gamma^{-1}, \gamma^{-1} \gamma_2)$.
Let $(\Lambda_x)_{x \in \Gop}$ be a family of Lagrangians in $T^{\ast} \G_x \setminus 0$. 
Then this family is \emph{equivariant} if $(c_{\gamma})^{\ast} \Lambda_y = \Lambda_x$ we obtain the previous condition. Alternatively, this can be expressed in terms of the commuting diagram
\[
\xymatrix{
\G_x \times \G^x \ar[d]_{c_{\gamma}} \ar[r]^-{m_x} & \G \\
\G_y \times \G^y \ar[ur]^{m_y}
}
\]

Let now $(\Lambda_x)_{x \in \Gop}$ be a family of Lagrangians in $T^{\ast} Z_x \setminus 0$. 
Then define $\tilde{c}_{\gamma} \colon Z_x \times \G^x \to Z_y \times \G^y$ by $\tilde{c}_{\gamma}(z, \eta) = (z \gamma^{-1}, \gamma^{-1} \eta)$. 
In analogy we obtain the equivariance condition for the principal action of $\G$ on $Z$ by $(\tilde{c}_{\gamma})^{\ast} \Lambda_y = \Lambda_x$.
We can express the equivariance in terms of the diagram
\[
\xymatrix{
Z_x \times \G^x \ar[d]_{\tilde{c}_{\gamma}} \ar[r]^-{\alpha_x} & Z \\
Z_y \times \G^y \ar[ur]^{\alpha_y} 
}
\]

\begin{Def}
\emph{(1)} $\Lambda_1$ is a \emph{family $\G$-relation} if $\Lambda_1$ is transverse to $Z \ast \G \to Y$. 

\emph{(2)} $\Lambda_2$ is a \emph{family $\G$-relation} if $\Lambda_2$ is transverse to $\Gpull \to M$. 

\label{Def:famGrel}
\end{Def}

\begin{Thm}
\emph{(1)} $\Lambda_2$ is a family $\G$-relation if and only if there is an equivariant family of Lagrangians $(\Lambda_x)_{x \in \Gop}$
such that $m_x^{\ast} \Lambda = \Lambda_x, \ x \in \Gop$.

\emph{(2)} $\Lambda_1$ is a family $\G$-relation if and only if there is an equivariant family of Lagrangians $(\Lambda_x)_{x \in \Gop}$ 
such that $\alpha_x^{\ast} \Lambda = \Lambda_x, \ x \in \Gop$. 

\label{Thm:famGrel}
\end{Thm}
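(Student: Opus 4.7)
The plan is to prove (1), since (2) follows by an entirely parallel argument with the principal action $\alpha$ in place of the multiplication $m$, the bibundle $Z$ in place of $\G$, and Lemma \ref{Lem:adm}(i) replacing Lemma \ref{Lem:adm}(ii). I will freely use Lemma \ref{Lem:adm}(ii), which characterises the fibrewise Lagrangian property of $m_x^{\ast}\Lambda_2$ in terms of admissibility of $\Lambda_2$.

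For the forward direction, I would assume $\Lambda:=\Lambda_2$ is transverse to $\pi\colon\Gpull\to M$, so that $\pi\circ\pr\colon m^{\ast}\Lambda\to M$ is a submersion. By the preimage theorem, each fibre
\[
\Lambda_x := m_x^{\ast}\Lambda = (\pi\circ\pr)^{-1}(x)\cap m^{\ast}\Lambda
\]
is a smooth submanifold of $T^{\ast}\G_x\times T^{\ast}\G^x$ of the expected half-dimension $\dim\G_x+\dim\G^x$, and the Lagrangian property follows by the symplectic argument carried out fibrewise in Lemma \ref{Lem:adm}(ii). Equivariance follows from the identity $m_y\circ c_\gamma = m_x$ on $\G_x\times\G^x$ for any $\gamma\in\G_y^x$, which, upon pulling back, gives $c_\gamma^{\ast}\Lambda_y = c_\gamma^{\ast}m_y^{\ast}\Lambda = m_x^{\ast}\Lambda = \Lambda_x$.

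For the converse, I would assume the equivariant Lagrangian family $(\Lambda_x)$ with $m_x^{\ast}\Lambda=\Lambda_x$ exists. By Lemma \ref{Lem:adm}(ii), $\Lambda$ is admissible, and $m^{\ast}\Lambda$ is a smooth Lagrangian submanifold of $T^{\ast}\Gpull$. To verify submersivity of $\pi\circ\pr$ at a point of $m^{\ast}\Lambda$, I would decompose the tangent space of $m^{\ast}\Lambda$ into vertical directions (tangent to the fibre $\Lambda_x$, mapping to zero under $d(\pi\circ\pr)$) and horizontal directions. Equivariance of the family under the conjugation maps $c_\gamma$ supplies infinitesimal generators along the orbits of $\G$ in $\Gpull$ that stay inside $m^{\ast}\Lambda$; these project surjectively onto $T_xM$ via $d\pi$, which yields the required surjectivity of $d(\pi\circ\pr)$.

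The main obstacle is precisely the converse direction: one must show that equivariance under the conjugation maps $c_\gamma$ indeed produces enough horizontal tangent vectors inside $m^{\ast}\Lambda$ to cover $T_xM$ for every $x$ in the image of $\pi\circ\pr$. Concretely, this reduces to verifying that the infinitesimal action of $\A(\G)$, lifted to $\Gpull$ through the graph of multiplication and restricted to $m^{\ast}\Lambda$, is transverse to the vertical Lagrangian directions cut out by $\Lambda_x$. Once this tangent space calculation is in place the conclusion is a direct application of the regular value theorem.
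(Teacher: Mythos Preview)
The paper does not give its own proof: it simply refers to \cite[Theorem~14]{LV} for (1) and states that (2) is analogous. So there is no detailed argument to compare against; what can be assessed is whether your sketch would actually go through.

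Your forward direction is sound. The submersion hypothesis on $\pi\circ\pr_{|m^{\ast}\Lambda}$ gives smooth fibres $\Lambda_x$ of the correct dimension, Lemma~\ref{Lem:adm}(ii) supplies the Lagrangian property, and the identity $m_y\circ c_\gamma=m_x$ yields equivariance automatically.

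The converse, however, has a genuine gap, and it is exactly the point you flag as the ``main obstacle'' but then resolve incorrectly. You propose to obtain the missing horizontal directions in $T(m^{\ast}\Lambda)$ from the infinitesimal conjugation action of $\A(\G)$. But the conjugation maps $c_\gamma$ move the base point $x\in M$ only along its $\G$-orbit $\O_x$, so the tangent vectors produced this way project, under $d\pi$, into $T_x\O_x=\operatorname{im}\varrho_x$ rather than onto all of $T_xM$. For the groupoids central to this paper (the $b$-groupoid, scattering groupoid, fibred cusp groupoid, \ldots) the orbits are the open faces of $M$, so at any boundary point $x$ the image of the anchor has positive codimension and your argument cannot produce surjectivity of $d(\pi\circ\pr)$.

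What is missing is that the transverse-to-orbit directions must come not from equivariance but from the smoothness of $\Lambda$ itself as a submanifold of $T^{\ast}\G$, transported through the submersion $m\colon\Gpull\to\G$. In the Lescure--Vassout argument one works with local non-degenerate phase functions for $\Lambda$ and shows directly that the induced parametrisation of $m^{\ast}\Lambda$ is transverse to the fibres of $\pi$; the hypothesis that each $m_x^{\ast}\Lambda$ is a genuine (smooth) Lagrangian enters precisely to guarantee this transversality fibre by fibre, and the global smooth structure of $\Lambda$ is what glues these local statements across different orbits. Equivariance alone is too coarse an invariant to see this.
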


\begin{proof}
We refer to \cite[Theorem 14]{LV} for the proof of \emph{(1)}. The argument in the case \emph{(2)} for groupoid actions goes along the same lines.
We therefore omit the details.
\end{proof}

Given a local Lagrangian submanifold $\Lambda \subset T^{\ast} Z \setminus 0$ in the setup of subsection \ref{genmorph}, we define the \emph{transpose} by $\Lambda^t := d\flip(\Lambda)$. In this setting we have the following result, cf. \cite[Theorem 11]{LV} which guarantees the convolvability of $\Lambda_b$ and $\Lambda_c$ in the setup of Section \ref{section:KN}. 

\begin{Thm}
The Lagrangian $\Lambda \subset T^{\ast} Z \setminus 0$ is convolvable with its transpose $\Lambda^t$. 
\label{Thm:transpose}
\end{Thm}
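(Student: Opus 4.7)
The plan is to verify the clean intersection condition of Definition \ref{Def:conv} directly by exploiting the compatibility of the flip map with the $\G$-charge maps, and then recognizing the relevant intersection as a fiber product which is automatically smooth.

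First, I would make the composable-pairs locus explicit. Convolving $\Lambda \subset T^{\ast} Z \setminus 0$ with $\Lambda^t = d\flip(\Lambda) \subset T^{\ast} Z^t \setminus 0$ takes place along the induced $T^{\ast}\G$-actions of Proposition \ref{Prop:mod}; the composable pairs form
\[
W := \{(\theta,\xi) \in T^{\ast} Z \times T^{\ast} Z^t : \tilde{q}(\theta) = \tilde{p}^t(\xi)\},
\]
and the claim reduces to showing that $(\Lambda \times \Lambda^t) \cap W$ is a smooth manifold whose tangent space at each point equals the intersection of the tangent spaces of $\Lambda \times \Lambda^t$ and of $W$.

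Next, I would exploit the intertwining property of the flip: since the transposed correspondence is built so that $p^t \circ \flip = q$ on $Z$, Proposition \ref{Prop:mod} applied to cotangent vectors yields $\tilde{p}^t \circ d\flip = \tilde{q}$. For $\xi = d\flip(\theta')$ with $\theta' \in \Lambda$, the defining equation of $W$ therefore reduces to $\tilde{q}(\theta) = \tilde{q}(\theta')$, so
\[
(\Lambda \times \Lambda^t) \cap W \;=\; \{(\theta, d\flip(\theta')) : \theta,\theta' \in \Lambda,\ \tilde{q}(\theta) = \tilde{q}(\theta')\},
\]
which is diffeomorphic to the fiber product $\Lambda \times_{\tilde{q},\tilde{q}} \Lambda$ via $(\theta, \theta') \mapsto (\theta, d\flip(\theta'))$.

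Finally, I would verify the two cleanness conditions. Smoothness of the fiber product follows because, the right $\G$-action on $Z$ being principal, the charge map $\tilde{q}: T^{\ast} Z \to \A^{\ast}(\G)$ is a surjective submersion, so $\tilde{q}|_{\Lambda}$ has locally constant rank and $\Lambda \times_{\A^{\ast}(\G)} \Lambda$ is a smooth submanifold of $\Lambda \times \Lambda$. The tangent-space equality is then a direct consequence of the same compatibility: at $(\theta, d\flip(\theta'))$, the tangent space to $W$ is $\ker(d\tilde{q} - d\tilde{p}^t \circ d(d\flip))$, and its intersection with $T\Lambda \times d\flip(T\Lambda)$ is precisely the tangent space to the fiber product. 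The main obstacle is controlling the intersection off the diagonal $\{\theta = \theta'\}$: on the diagonal the intersection is canonically identified with $\Lambda$ itself, but away from it one must use the constant-rank behaviour of $\tilde{q}|_{\Lambda}$; this is where principality of the $\G$-action is essential and where the argument parallels the proof of \cite[Theorem 11]{LV}.
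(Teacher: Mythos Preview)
Your approach is essentially the same as the paper's. Both hinge on the identity $\tilde{p}^t = \tilde{q}\circ d\flip$ (the commuting triangle over $\A^{\ast}(\G)$) to rewrite the composable locus $(\Lambda\times\Lambda^t)\cap(T^{\ast}Z\ast T^{\ast}Z^t)$ as the preimage of the diagonal $\Delta_{\A^{\ast}(\G)}$ under $\tilde{q}\times\tilde{p}^t$, i.e.\ as the fiber product $\Lambda\times_{\tilde{q},\tilde{q}}\Lambda$, and then argue cleanness from the good behaviour of $\tilde{q}$. The paper phrases the key regularity input as ``$\tilde{q}$ is a local diffeomorphism'' (so that $(\tilde{q}\times\tilde{p}^t)|_{\Lambda\times\Lambda^t}$ is a local diffeomorphism and the preimage of $\Delta_{\A^{\ast}(\G)}$ is automatically a submanifold with the correct tangent spaces), whereas you phrase it as a submersion/constant-rank statement for $\tilde{q}|_{\Lambda}$; both ultimately defer to the mechanism of \cite[Theorem 11]{LV}. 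Your write-up is slightly more explicit about isolating the off-diagonal difficulty, but there is no genuinely different idea.
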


\begin{proof}
Consider $\Lambda^t \subset T^{\ast} \Zop \setminus 0$, then $\tilde{p}^t \colon T^{\ast} \Zop \to \A^{\ast}(\G)$ and the diagram
\[
\xymatrix{
T^{\ast} \Zop \ar[d]_{\tilde{p}^t} \ar@{>->>}[r]^{d \flip} & \ar[dl]_{\tilde{q}} T^{\ast} Z \\
\A^{\ast}(\G) &
}
\]

commutes, i.e. $\tilde{p}^t = \tilde{q} \circ d\flip$. Since $\tilde{q}$ is
a local diffeomorphism, $\tilde{q} \times \tilde{p}_{|\Lambda \times \Lambda^t}^t \colon \Lambda \times \Lambda^t \to \A^{\ast}(\G) \times \A^{\ast}(\G)$
is a local diffeomorphism. It follows that
\begin{align*}
& (\Lambda \ast \Lambda^t) = (\tilde{q} \times \tilde{p}^t)_{|\Lambda \times \Lambda^t}^{-1}(\Delta_{\A^{\ast}(\G)}) = (\tilde{q} \times \tilde{p}^t)^{-1}(\Delta_{\A^{\ast}(\G)}) \cap (\Lambda \times \Lambda^t)
\end{align*}
is a submanifold of $T^{\ast} Z \times T^{\ast} \Zop$. We have that
\[
T(\Lambda \ast \Lambda^t) = T(T^{\ast} Z \ast T^{\ast} \Zop) \cap T(\Lambda \times \Lambda^t)
\]

and hence the intersection $(\Lambda \times \Lambda^t) \cap (T^{\ast} Z \ast T^{\ast} Z^t)$ is clean. 
Additionally, $\Lambda \times \Lambda^t \subset T^{\ast} Z \times T^{\ast} \Zop$ is also a local Lagrangian submanifold.
Altogether it follows that $\Lambda$ and $\Lambda^t$ are composable. 
\end{proof}

\subsection*{Lagrangian distributions}

We recall some properties of the class of Lagrangian distributions associated to a family $\G$-relation. As a particular case of Fourier integral operators we recover the definition of pseudodifferential operators on Lie groupoids and $\G$-equivariant pseudodifferential operators defined with regards to a Lie groupoid action. We note that as usual, Lagrangian distributions are actually defined as distributional half-densities, that is as distributional sections of the half-density bundle $\Omega^{\frac{1}{2}}$ over some manifold.
Given a smooth fibration $\pi \colon X \to Y$, denote by $\D_{\pi}'(X)$ the space of distributions transversal to the fibers of $\pi$, see p.5 of \cite{LMV}.
If $\pi_i \colon X \to Y_i$ for $i = 1,2$ are smooth fibrations, then $\D_{\pi_1, \pi_2}'(X)$ denotes the \emph{bi-transversal} distributions, i.e. transversal to the fibers of $\pi_1$ as well as to $\pi_2$.
The Lagrangian distributions $I^m(Z, \Lambda, \OmegaZh)$ for a given family $\G$-relation $\Lambda \subset T^{\ast} Z \setminus 0$ are subspaces of $\D_{q}'(Z, \Lambda, \OmegaZh)$.
In the special case where $Z = \G$ with the canonical action of $\G$ on itself, we recover the space $I^m(\G, \Lambda, \OmegaGh)$ which is a subspace of the 
bi-transversal distributions $\D_{r,s}'(\G, \Lambda, \OmegaGh)$. The space of transversal distributions, endowed with the convolution product induced by
the groupoid structure, is in one-to-one correspondence with the space of $\G$-operators. Also, the bi-transversal 
distributions endowed with the convolution product are in one-to-one correspondence with the \emph{adjointable} $\G$-operators, cf. \cite{LMV}. 
Note that the map $\flip$ in Fig. \ref{fig:main} induces the canonical isomorphism $\D_q'(Z, \Lambda; \OmegaZh) \cong \D_{p^t}'(\Zop, \Lambda; \OmegaZoph)$.
Therefore, we focus in what follows on the case of the action of $\G$ on $Z$, the other cases, i.e. $Z = \G$ or $\Zop$, follow
from appropriate identifications. 

\begin{Def}
Given a family $\G$-relation $\Lambda \subset T^{\ast} Z \setminus 0$ define $I_c^{m}(Z, \Lambda, \OmegaZh)$
as the class of Lagrangian distributions on $Z$. 
\label{Def:LMV4}
\end{Def}

Lagrangian distributions are locally given as (families of) oscillatory integrals. Taking the amplitude of the oscillatory integral, after a choice of phase function and some reductions, provides a local representation of the so-called \textit{principal symbol map} which is an isomorphism 
\[
I_c^m(Z,\Lambda,\Omega_Z^{\frac{1}{2}})\rightarrow S^{m+n/4}(\Lambda,I_\Lambda\otimes\Omega^{\frac{1}{2}}\otimes\Omega_Z^{-\frac{1}{2}}).
\]
The Maslov bundle $I_\Lambda$ therein reflects the (non-equivalent) choices of phase function.\\
The inverse of this map, associating an operator to a (symbol) section of the Maslov bundle is called \textit{Maslov quantization}.

\begin{Thm}
Let $\Lambda_1 \subset T^{\ast} Z \setminus 0$ and $\Lambda_2 \subset T^{\ast} \G \setminus 0$ be family $\G$-relations that
are convolvable with excess $e$. 
Then we have the following \emph{rules of composition}: Let $A_1 \in I_c^{m_1}(Z, \Lambda_1, \OmegaZh), \ A_2 \in I_c^{m_2}(\G, \Lambda_2, \OmegaGh)$ 
then
\[
A_1 \circ A_2 \in I_c^{m_1 + m_2 + \frac{e}{2}}(Z, \Lambda_1 \circ \Lambda_2, \OmegaZh).
\]
\label{Thm:LMV6}
\end{Thm}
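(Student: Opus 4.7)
The plan is to reduce the statement to the classical clean composition theorem for Fourier integral operators (Duistermaat--H\"ormander) applied fiberwise over the unit space, using the equivariance description of family $\G$-relations from Theorem \ref{Thm:famGrel}.

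First I would use the characterization of family $\G$-relations to write $\Lambda_1$ and $\Lambda_2$ as equivariant families of Lagrangians. Concretely, by Theorem \ref{Thm:famGrel}(2), there is an equivariant family $(\Lambda_{1,x})_{x \in \Gop}$ of Lagrangians in $T^{\ast}Z_x \setminus 0$ with $\alpha_x^{\ast}\Lambda_1 = \Lambda_{1,x}$, and by Theorem \ref{Thm:famGrel}(1) an equivariant family $(\Lambda_{2,x})_{x \in \Gop}$ in $T^{\ast}\G_x \setminus 0$ with $m_x^{\ast}\Lambda_2 = \Lambda_{2,x}$. Correspondingly, $A_1 \in I_c^{m_1}(Z,\Lambda_1,\OmegaZh)$ and $A_2 \in I_c^{m_2}(\G,\Lambda_2,\OmegaGh)$, being transversal to the $q$- and $s$-fibers respectively, may be represented as smooth equivariant families $(A_{1,x})_{x\in\Gop}$ of classical Lagrangian distributions on $Z_x$ associated to $\Lambda_{1,x}$ and $(A_{2,x})_{x\in\Gop}$ on $\G_x$ associated to $\Lambda_{2,x}$, exactly as in \cite{LMV} and \cite{LV}.

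Second, I would translate the convolution $A_1 \circ A_2$ into a fiberwise composition. By the correspondence between transversal distributions and $\G$-(families of) operators, the convolution product on the groupoid acting on $Z$ becomes, at the fiber over $x$, the ordinary operator composition $A_{1,x} \circ A_{2,x}$ with kernel on $Z_x$ obtained from the product kernel on $Z_x \times \G_x$ via the action map $\alpha_x$. The hypothesis that $\Lambda_1$ and $\Lambda_2$ are cleanly convolvable with excess $e$ on $T^{\ast}Z \ast_{\tilde r} T^{\ast}\G$ translates, using Lemma \ref{Lem:adm} and Theorem \ref{Thm:famGrel}, into clean intersection with the same excess $e$ of $\Lambda_{1,x} \times \Lambda_{2,x}$ with the graph of $(d\alpha_x)^t$ inside $T^{\ast}Z_x \times T^{\ast}\G_x$, uniformly in $x$.

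Third, with this fiberwise reduction in place, the classical clean intersection calculus for Fourier integral operators applies in each fiber: $A_{1,x} \circ A_{2,x}$ is a Lagrangian distribution of order $m_1 + m_2 + e/2$ on $Z_x$ associated to the cleanly composed Lagrangian $\Lambda_{1,x} \circ \Lambda_{2,x} = \tilde\alpha_x(\Lambda_{1,x} \times \Lambda_{2,x} \cap \cdots)$. The equivariance of the two families is preserved under this composition — this follows from the commuting diagrams defining $c_\gamma$ and $\tilde c_\gamma$ together with naturality of $\tilde\alpha$ — so the resulting family $(A_{1,x} \circ A_{2,x})_x$ assembles into a distribution on $Z$ transversal to $q$, and its associated equivariant family of Lagrangians is precisely $\alpha_x^{\ast}(\Lambda_1 \circ \Lambda_2)$ by Definition \ref{Def:conv}. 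Applying Theorem \ref{Thm:famGrel} in the reverse direction identifies $A_1 \circ A_2$ as an element of $I_c^{m_1 + m_2 + e/2}(Z, \Lambda_1 \circ \Lambda_2, \OmegaZh)$.

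The main obstacle I anticipate is the careful bookkeeping of densities and the uniformity of the clean intersection excess across fibers: one must verify that the excess computed at the level of the CDW-groupoid action agrees with the fiberwise excess (a linear-algebra verification via Lemma \ref{Lem:adm}), and that the half-density factors on $Z$ and $\G$ combine correctly under the action map $\tilde\alpha$ to yield the half-density on $Z$ that is demanded by the target space. The admissibility assumption (no-zero condition of Definition \ref{Def:adm}) is what allows us to stay away from the zero sections and apply the clean calculus without degeneracies.
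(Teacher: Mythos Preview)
The paper does not actually supply a proof of this statement: Theorem~\ref{Thm:LMV6} is stated without proof, as part of the summary of the Fourier integral operator machinery imported from \cite{LMV} and \cite{LV} (the label itself points to the Lescure--Vassout paper). So there is no ``paper's own proof'' to compare against; the authors treat this composition rule as a known result from the literature, adapted to the action setting.

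Your proposal is the natural strategy and is essentially what is carried out in \cite{LV} for the groupoid case: use Theorem~\ref{Thm:famGrel} to pass to equivariant families of Lagrangians over the unit space, reduce the convolution to fiberwise composition of ordinary FIOs, invoke the classical Duistermaat--H\"ormander clean composition theorem in each fiber, and reassemble using equivariance. The points you flag as obstacles---uniformity of the excess across fibers and the correct behaviour of half-densities under $\tilde\alpha$---are exactly the technical content that needs to be checked, and they are handled in \cite{LV} via the identifications recorded in Proposition~\ref{Prop:twisted} and the surrounding discussion (in particular \eqref{t2} and \eqref{t3}). Your sketch is correct in outline; filling in the density bookkeeping would amount to reproducing the cited argument.
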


On principal symbol level, this composition induces a symbol product which is covered in the next section.\\

We recall the definition of pseudodifferential operators defined over the category of principal $\G$-spaces.
Most of the material in this section is from \cite{NWX} for the calculus of pseudodifferential operators on Lie groupoids
as well as from \cite{P} for pseudodifferential operators defined on fibered spaces with a groupoid action. In the pseudodifferential setting, the symbol map is far simpler since the Maslov bundle of a conormal bundle (in particular the  diagonal) is canonically trivializable.

\begin{itemize}
\item Let $\Omega \subset \G$ be an open subset, $W \subset \Rr^n$ be open, set $V_s := s(\Omega)$ and fix a diffeomorphism $\psi_s \colon V_s\times W_s  \to \Omega$.
Then $\psi_s$ is called \emph{$s$-fiber preserving} if the diagram
\[
\xymatrix{
\Omega \ar[d]_{s} & \ar[l]_-{\psi_s} W \times s(\Omega) \ar[dl]_-{\pi_2} \\
s(\Omega)
}
\]

commutes. In other words for each $(w, x) \in W_s\times V_s$ we have $s(\psi_s(w, x)) = x$. 
Fix the notation $\Omega \sim W \times s(\Omega) \sim W \times V$ if there is such an $s$-fiber preserving diffeomorphism.
Denote by $\psi_s^{\ast} \colon C_c^{\infty}(W \times V) \to C_c^{\infty}(\Omega)$ the pullback over compactly supported
smooth functions. Then $P$ is called a \emph{smooth family} or \emph{$C^{\infty}$-family} if for each fiber preserving diffeomorphism $\psi_s$ as above
the operator $(\psi_s^{-1})^{\ast} \circ P \circ (\psi_s)^{\ast} \colon C_c^{\infty}(W \times V) \to C_c^{\infty}(W \times V)$
is a family of properly supported pseudodifferential operators, locally parametrized over $s(\Omega)$.

\item Let $R_{\gamma} \colon C_c^{\infty}(\G_{s(\gamma)}) \to C_c^{\infty}(\G_{r(\gamma)})$ denote the \emph{right-transform}
defined by $(R_{\gamma} f)(\eta) = f(\eta \gamma)$. 
If $P = (P_x)_{x \in \Gop}$ is a family of continuous linear operators $P_x \colon C_c^{\infty}(\G_x) \to C_c^{\infty}(\G_x)$,
then $P$ is called \emph{right-invariant} if $R_{\gamma} P_{s(\gamma)} = P_{r(\gamma)} R_{\gamma}$ holds for each $\gamma \in \G$. 
We call a right invariant operator $P$ a \emph{$\G$-operator}.

\item Support condition: Let $\mu \colon \G \ast_{s} \G \to \G, \ (\gamma, \eta) \mapsto \gamma \eta^{-1}$.
Where we set $\G \ast_s \G := \{(\gamma, \eta) \in \G \times \G : s(\gamma) = s(\eta)\}$.
Denote by $\supp(P) = \overline{\bigcup_{x \in \Gop} \supp(k_x)}$ the support of $P$, where $k_x$ denotes the 
Schwartz integral kernel of the operator $P_x$ in the family $P = (P_x)_{x \in \Gop}$. 
Then $\supp_{\mu}(P) := \mu(\supp(P))$ will be called \emph{reduced support}. 
If $\supp_{\mu}(P) \subset \G$ is a compact subset (with regard to the locally compact topology of $\G$), then we 
call $P$ \emph{uniformly supported}. 

\end{itemize}

\begin{Def}
Let $\G \rightrightarrows \Gop$ be a Lie groupoid.
We define by $\Psi_u^m(\G)$ the class of smooth families $P = (P_x)_{x \in \Gop}$ of pseudodifferential operators of order $m \in \Rr$ on the $s$-fibers $(\G_x)_{x \in \Gop}$ of $\G$ 
which are right invariant and uniformly supported.
\label{Def:Gpsdo}
\end{Def}

\begin{Rem}
\emph{i)} By \ref{Lem:subm} in a Lie groupoid $\G \rightrightarrows \Gop$ the source and range
map are tame submersions. Hence the fibers $\G_x$ are smooth manifolds without corners. The uniform support property implies that the operator is properly supported when considered as an operator
acting on $C_c^{\infty}(\G)$. We obtain by the right invariance property of the family $P = (P_x)_{x\in \Gop}$ a reduced kernel
$k_P$ which is a compactly supported distribution on $\G$ conormal to $\Gop$.
If $P$ is a $\G$-operator note that by right invariance $k_{r(\gamma)}(\gamma_1, \gamma_2) = k_{s(\gamma)}(\gamma_1 \gamma^{-1}, \gamma_2 \gamma^{-1})$
for $\gamma_1, \gamma_2 \in \G_{r(\gamma)}, \ \gamma \in \G$.
It follows that $k_{s(\gamma)}(\gamma, \eta) = k_{r(\gamma)}(\id_{r(\gamma)}, \eta \gamma^{-1}) =: k_P(\eta \gamma^{-1})$.
In particular the reduced kernel $k_P(\eta \gamma^{-1}) = k_{s(\gamma)}(\eta, \gamma)$ depends only on $\eta \gamma^{-1} \in \G$ for
each $(\eta, \gamma^{-1}) \in \Gpull$ as can be shown by the right-invariance of a given Haar system, cf. \cite{NWX}. 
Thus the $\G$-operator $P \colon C_c^{\infty}(\G) \to C_c^{\infty}(\G)$ is defined as
\[
P u(\gamma) = \int_{\G_{s(\gamma)}} k_P(\gamma \eta^{-1}) u(\eta) \,d\mu_{s(\gamma)}(\eta)
\]

for $u \in C_c^{\infty}(\G)$ and $(\mu_x)_{x \in \Gop}$ is a smooth right invariant system of Haar measures which is uniquely determined up to Morita equivalence.
\label{Rem:Gpsdo}
\end{Rem}

In the same vein we consider the right $\G$-space $Z$ and the left $\G$-space $Z^t$. 
For technical reasons we need to consider also families of pseudodifferential operators defined on the fibers of these spaces. 

\begin{itemize}
\item We denote by $\Psi^m(Z)$ families of operators $S = (S_x)_{x \in \Gop}$ with the following properties. 
For each $x \in \Gop$ we have $S_x \colon C_c^{\infty}(Z_x) \to C_c^{\infty}(Z_x)$ are
continuous linear operators contained in $\Psi_{prop}^{m}(Z_x)$. 

\item Additionally, for each $\Omega \subset Z$ open such that there is a diffeomorphism preserving the $q$-fibers $\Omega \sim q(\Omega) \times W$.
There is a function $a \colon q(\Omega) \to S^m(T^{\ast} W)$ into the H\"ormander symbols class which is smooth and
$S_{x|\Omega \cap Z_x} = a_x(y, D_y)$ on $\Omega \cap Z_x \cong W$. 

\item By abuse of notation we denote by $R_{\gamma} \colon C_c^{\infty}(Z_{s(\gamma)}) \to C_c^{\infty}(Z_{r(\gamma)})$ the
\emph{right-transform} acting on the fibers of $Z$, i.e. $(R_{\gamma} f)(z) = f(z \gamma), \ z \in Z$. 
Then $S \in \Psi^m(Z)$ is called \emph{right-invariant} (with regard to the $\G$-action) if $R_{\gamma^{-1}} S_{r(\gamma)} R_{\gamma} = S_{s(\gamma)}$
holds for each $\gamma \in \G$. 

\end{itemize}

\begin{Def}
Let $Z \ \rotatebox[origin=c]{90}{$\circlearrowright$}\ \G$ denote a principal right $\G$-action,
then we denote by $\PsiZ$ the subclass of $\Psi^{\ast}(Z)$ consisting of families which 
are right-invariant. The corresponding notion $\PsiZop$ for a principal left $\G$-action $\G  \ \rotatebox[origin=c]{-90}{$\circlearrowleft$}\ Z^t$  is defined in complete analogy.
\label{Def:Zpseudos}
\end{Def}

As shown in \cite{LV}, the $\G$-invariant pseudodifferential operators are a special case of Fourier integral operators for the Lagrangian $\Lambda = \A^{\ast}(\G)$.
It is not hard to see that the same is true for actions: $I^{L}(Z, (T^q Z)^{\ast}) \cong \PsiZm$, where
$L = m + \frac{1}{4} (\dim Z - 2 \dim(M))$. If $m = 0$ the operator $A \in I_c^L(Z, \Lambda; \OmegaZh)$ extends to an unbounded operator on the $C^{\ast}$-module $C_r^{\ast}(Z) := \overline{C_c^{\infty}(Z)}$.
The appropriate version of the Egorov theorem for groupoid actions reveals the module structure of the space of Lagrangian distributions in relation to the
pseudodifferential operators. 

\begin{Thm}[Egorov's theorem for actions] Let $\Lambda_1 \subset T^{\ast} Z \setminus 0, \ \Lambda_2 \subset T^{\ast} \G \setminus 0$ be convolvable,
closed (right-) $\G$-relations such that $\Lambda_1 \circ \Lambda_2 \subset (T^q Z)^{\ast}$ and $\Lambda_2 \circ \Lambda_1^t \subset (T^q \Zop)^{\ast}$
(where $\Lambda_1^t = \flip^{\ast} \Lambda_1$), then 
\[
I_c(\G, \Lambda_2, \OmegaGh) \ast \Psi(Z, \OmegaZh) \ast I(\G, \Lambda_2, \OmegaGh) \subset \Psi(Z, \OmegaZh). 
\]
\label{Thm:Egorov}
\end{Thm}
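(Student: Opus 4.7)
The plan is to apply the composition theorem for Lagrangian distributions (Theorem \ref{Thm:LMV6}) twice, combined with the identification $\Psi(Z, \OmegaZh) \cong I(Z, (T^q Z)^\ast, \OmegaZh)$ recalled just before the statement. The structural point is that Egorov-type statements always reduce to: (i) composition rules for Lagrangian distributions, and (ii) the specific geometric hypotheses that guarantee the output Lagrangian sits inside the symbol-variety of pseudodifferential operators, namely $(T^q Z)^\ast$.

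First I treat the right multiplication. Take $B \in \Psi(Z, \OmegaZh)$, viewed as a Lagrangian distribution in $I(Z, \Lambda_1, \OmegaZh)$ with $\Lambda_1 \subset (T^q Z)^\ast$, and $A_2 \in I(\G, \Lambda_2, \OmegaGh)$. The first hypothesis — that $\Lambda_1$ and $\Lambda_2$ are convolvable $\G$-relations with $\Lambda_1 \circ \Lambda_2 \subset (T^q Z)^\ast$ — together with Theorem \ref{Thm:LMV6} places the composition $B \ast A_2$ in $I_c(Z, \Lambda_1 \circ \Lambda_2, \OmegaZh)$, which by the inclusion sits inside $\Psi(Z, \OmegaZh)$.

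For the left multiplication by $A_1 \in I_c(\G, \Lambda_2, \OmegaGh)$, I would dualise via the flip $\flip \colon Z \to \Zop$, using the canonical isomorphism $\D'_q(Z, \Lambda; \OmegaZh) \cong \D'_{p^t}(\Zop, \Lambda^t; \OmegaZoph)$ noted before Definition \ref{Def:LMV4}. Under the flip the left $\G$-action on $Z$ becomes a right $\G$-action on $\Zop$, and the Lagrangian of the transpose of $B \ast A_2$ now sits in $\Lambda_1^t \subset (T^q \Zop)^\ast$. A second application of Theorem \ref{Thm:LMV6}, justified by the hypothesis $\Lambda_2 \circ \Lambda_1^t \subset (T^q \Zop)^\ast$, yields that $(B \ast A_2)^t \ast A_1^t$ has its Lagrangian contained in $(T^q \Zop)^\ast$; transposing back gives $A_1 \ast (B \ast A_2) \in \Psi(Z, \OmegaZh)$, as desired. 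The geometric prerequisites of Theorem \ref{Thm:LMV6} are built into the statement: admissibility (Definition \ref{Def:adm}) is part of the hypothesis via ``closedness as $\G$-relations'', the family $\G$-relation property yields equivariant fibrewise Lagrangians by Theorem \ref{Thm:famGrel}, and clean convolvability is the content of Definition \ref{Def:conv}.

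The main obstacle I anticipate is the intermediate step: after the first composition the Lagrangian underlying $B \ast A_2$ is only known to be \emph{contained} in $(T^q Z)^\ast$, not equal to it. One must therefore verify that this smaller Lagrangian, after passage to the transpose via $\flip$, remains an admissible family $\G$-relation cleanly convolvable with $\Lambda_2$. This follows from stability of clean intersection under restriction to the submanifold $(T^q Z)^\ast \subset T^\ast Z$ and equivariance of the fibrewise Lagrangians (Theorem \ref{Thm:famGrel}); the remaining work is bookkeeping — checking that the orders add as $m_1 + m_2 + e/2$ (Theorem \ref{Thm:LMV6}), that the supports remain properly / uniformly supported under convolution so that the composition is well defined on the groupoid, and that the result lies in the H\"ormander class corresponding to $\Psi(Z, \OmegaZh)$.
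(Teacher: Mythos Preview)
The paper states Theorem~\ref{Thm:Egorov} without proof; it is recorded as a direct consequence of the composition Theorem~\ref{Thm:LMV6} together with the identification $\Psi(Z,\OmegaZh)\cong I(Z,(T^qZ)^\ast,\OmegaZh)$ stated immediately before it, and implicitly defers to \cite{LV} for the details. Your strategy---two applications of the composition rule, with the left factor handled by passing to $\Zop$ via $\flip$---is exactly the intended mechanism and matches how the analogous result is obtained in \cite{LV} for $Z=\G$.

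One point of bookkeeping deserves care. You write ``viewed as a Lagrangian distribution in $I(Z,\Lambda_1,\OmegaZh)$ with $\Lambda_1\subset(T^qZ)^\ast$'', thereby identifying the $\Lambda_1$ of the hypotheses with the wavefront Lagrangian of $B\in\Psi(Z)$. But the Lagrangian of a pseudodifferential $B$ is $(T^qZ)^\ast$ itself, not an arbitrary sub-Lagrangian; the role of the hypotheses $\Lambda_1\circ\Lambda_2\subset(T^qZ)^\ast$ and $\Lambda_2\circ\Lambda_1^t\subset(T^{q}\Zop)^\ast$ is precisely to force the convolved Lagrangian back into the pseudodifferential locus after each composition. (The printed conclusion omits $\Lambda_1$ entirely, which appears to be a typographical slip; the natural reading is that one conjugates $I(Z,\Lambda_1)$-distributions, not $\Psi(Z)$, by $I(\G,\Lambda_2)$-distributions---otherwise $\Lambda_1$ plays no role.) Modulo this clarification your argument is sound: convolvability, admissibility and the family-$\G$-relation property are the exact inputs Theorem~\ref{Thm:LMV6} requires, and your flip/transpose manoeuvre is the correct device for the left factor.
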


Note that a Haar system is unique only up to Morita equivalence of Lie groupoids. Haar systems are constructed via half densities, cf. \cite{LR}.
We implicitly fix a Haar system here and in the next section.  

\subsection{Twisted product}

We now recall the definition of the twisted product for standard symbol spaces from \cite{LV}. First we fix some notation. This twisted product is defined in such a way such that the symbol of a convolution of two distributions is the twisted product of their symbols.\\ 
For a given vector bundle $p \colon E \to \G$ or $p \colon E \to Z$ we set $\widehat{E} := (p_{|\Lambda})^{\ast}(E_{|\Lambda})$.
Set $\SigmaG := \OmegaG^{-\frac12} \otimes \Omega^{\frac12}$ and $\SigmaZ := \Omega_Z^{-\frac12} \otimes \Omega^{\frac12}$. 
We denote by $I_{\Lambda}$ the corresponding Maslov bundle. 

\begin{Prop}
The pairing 
\[
\ast \colon S^{m_1 + \frac{n}{4}}(\Lambda_1, \SigmaZh \otimes I_{\Lambda_1}) \times S^{m_2 + \frac{n}{4}}(\Lambda_2, \SigmaGh \otimes I_{\Lambda_2}) \to S^{m_1 + m_2 + \frac{e}{2} + \frac{n}{2}}(\Lambda, \SigmaZh \otimes I_{\Lambda})
\]

where $\Lambda = \Lambda_1 \circ \Lambda_2$ for $(z, \xi) \in \Lambda$
\[
a_1 \ast a_2(x, \xi) := \int_{\widetilde{\alpha}^{-1}(z, \xi) \cap \Lambda_1 \times \Lambda_2} a_1 \boxtimes a_2
\]

yields well-defined symbol referred to as the \emph{twisted product}.
\label{Prop:twisted} 
\end{Prop}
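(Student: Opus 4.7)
The plan is to adapt to the groupoid-action setting the classical composition theorem for Fourier integral operators with clean intersection, in the form used by Lescure--Vassout for the case $Z = \G$. I would follow four main steps: (a) identify the geometry of the clean intersection, (b) identify the integrand as a density on the fiber valued in the target bundle, (c) perform the fiber integration, (d) check the symbol order.

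First, by the cleanly convolvable assumption the set
\[
C := (\Lambda_1 \times \Lambda_2) \cap (\Gmod \ast_{\tilde r} \Gamma)
\]
is a smooth submanifold of $\Lambda_1 \times \Lambda_2$, and the restriction $\tilde\alpha|_C \colon C \to \Lambda$ is a surjective submersion whose fibers are smooth manifolds of dimension equal to the excess $e$. In particular, for $(z,\xi) \in \Lambda$ the fiber $\tilde\alpha^{-1}(z,\xi) \cap (\Lambda_1 \times \Lambda_2)$ over which the integral is taken is a smooth $e$-dimensional manifold. Local convergence is then ensured by the equivariance/support hypotheses built into the class of family $\G$-relations (Theorem~\ref{Thm:famGrel}), which reduce the fiber to an effectively compact set after passing to the quotient by the principal action.

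Second, using the short exact sequence $0 \to T_{\text{vert}} C \to T C \to \tilde\alpha^* T\Lambda \to 0$ and the standard clean-intersection lemma relating half-densities on two Lagrangians to half-densities on their composition, I would establish a canonical isomorphism
\[
\bigl( \SigmaZh \boxtimes \SigmaGh \bigr) \otimes (I_{\Lambda_1} \boxtimes I_{\Lambda_2}) \big|_C \; \cong \; \tilde\alpha^*\bigl( \SigmaZh \otimes I_\Lambda \bigr) \otimes \bigl|\Lambda^{e} T_{\text{vert}} C\bigr|.
\]
This is the key technical identification: the Maslov line $I_\Lambda$ of the composition is obtained from $I_{\Lambda_1} \otimes I_{\Lambda_2}$ after division by the fiber contribution, and the half-density factors match by dualizing with the Liouville densities on $T^*Z$ and $T^*\G$ (the latter compatible with the CDW symplectic structure by Theorem~\ref{Thm}). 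With this identification, the integrand $a_1 \boxtimes a_2$ becomes a genuine density on the fiber with values in $\tilde\alpha^*(\SigmaZh \otimes I_\Lambda)$, so integration yields a well-defined section of $\SigmaZh \otimes I_\Lambda$ over $\Lambda$.

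Third, a stationary-phase/clean-composition analysis in the fiber variables shows that the resulting section is a symbol of the asserted order: the input orders $m_i + n/4$ contribute $m_1 + m_2 + n/2$, and fiber integration of dimension $e$ gives the extra factor $e/2$, producing $m_1 + m_2 + e/2 + n/2$. The main obstacle, and the place where most care is required, is the canonical bundle identification in step two: one has to track the Liouville densities on $T^*Z$ and $T^*\G$, their compatibility with the symplectic structure of the CDW-groupoid, and the behavior of the Maslov bundles under the clean composition $\Lambda = \tilde\alpha(C)$. Once these identifications are secured, invariance of the construction under changes of phase (which is automatic by working with the Maslov-bundle-valued symbols) and the order estimate are routine.
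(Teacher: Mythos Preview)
Your proposal is correct and follows essentially the same route as the paper: both arguments hinge on the canonical bundle identification over the clean intersection $C=(\Lambda_1\times\Lambda_2)\cap(\Gmod\ast_{\tilde r}\Gamma)$ that matches $(\SigmaZh\otimes I_{\Lambda_1})\boxtimes(\SigmaGh\otimes I_{\Lambda_2})$ with $\tilde\alpha^{\ast}(\SigmaZh\otimes I_\Lambda)$ tensored by a fiber density, after which fiber integration produces a section of the target bundle. The paper records this identification more tersely via the isomorphisms $\Omega(\ker d\tilde\alpha)^{1/2}\cong\tilde\alpha^{\ast}(\SigmaZh)$ and the Maslov-bundle map $I_{\Lambda_1}\boxtimes I_{\Lambda_2}\to\tilde\alpha^{\ast}I_\Lambda\otimes\Omega^{-1/2}(\ker d\alpha)\otimes\Omega(\ker d\alpha\cap T(\Lambda_1\times\Lambda_2))$ taken from \cite{LV}, whereas you phrase the same content through the short exact sequence $0\to T_{\mathrm{vert}}C\to TC\to\tilde\alpha^{\ast}T\Lambda\to 0$; the substance is the same.
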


\begin{proof}
That this pairing is a well-defined symbol follows by the relation to the groupoid structure of $T^{\ast} \G$; the corresponding
identifications of Maslov bundles and densities:

Start with the natural map 
\begin{align}
& I_{\Lambda_1} \boxtimes I_{\Lambda_2} \to \widetilde{\alpha}^{\ast} I_{\Lambda} \otimes \Omega^{-\frac{1}{2}}(\ker d\alpha) \otimes \Omega(\ker d\alpha \cap T(\Lambda_1 \times \Lambda_2)) \label{t1}
\end{align}

We have the identifications (cf. \cite{LV})
\begin{align}
& (\SigmaZh \otimes I_{\Lambda_1}) \boxtimes (\SigmaGh \otimes I_{\Lambda_2}) \cong \Omega(\ker d \widetilde{\alpha}) \otimes (I_{\Lambda_1} \boxtimes I_{\Lambda_2}) \cong \widetilde{\alpha}^{\ast}(\widehat{\Sigma}_{Z}) \otimes (I_{\Lambda_1} \boxtimes I_{\Lambda_2}), \label{t2} \\
& \Omega(\ker d \widetilde{\alpha})^{\frac{1}{2}} \cong \widetilde{\alpha}^{\ast}(\SigmaZh). \label{t3}
\end{align}

Via \eqref{t1}, \eqref{t2} and \eqref{t3} we obtain that there is a natural homomorphism of vector bundles over $(\Lambda_1 \times \Lambda_2) \cap \Gamma_{mod} \ast \Gamma$:
\begin{align}
& (\SigmaZh \otimes I_{\Lambda_1}) \boxtimes (\SigmaGh \otimes I_{\Lambda_2}) \to \widetilde{\alpha}^{\ast}(I_{\Lambda} \otimes \SigmaZh) \otimes \Omega(\ker d \widetilde{\alpha} \cap T(\Lambda_1 \times \Lambda_2)). \label{t4}
\end{align}

Therefore by \eqref{t4} the twisted product is well-defined on the (standard) symbol spaces. 
\end{proof}

\section{Relative quantization and compositions}

\label{section:SymbolCalc}

We finally have assembled the machinery needed to define the microlocalization of the embedding $Y \hookrightarrow M$. As in Section \ref{section:KN} we realize our operators as matrices whose entries are Fourier integral operators associated to certain Lagrangians, obtained from Kohn-Nirenberg quantization of adapted symbol classes. 
We first define these symbol classes and the Lagrangians. We then perform the quantization. Lastly, we check that we have obtained a non-commutative completion. 

\subsection*{Langrangian submanifolds}

For $Y \hookrightarrow M$ we consider the embeddings of Section \ref{genmorph} which are inclusions of submanifolds (with corners) $\varrho \colon \H \hookrightarrow Z$ and $\sigma \colon Z \hookrightarrow \G$. 
Then we fix the following normal bundles associated to these inclusions.
\begin{itemize} 
\item The normal bundle $\N \to Y$ to the inclusion $Y \hookrightarrow M$. 

\item The normal bundle $\N^{Z} Y$ with regard to the inclusion $\varrho \circ u_{\partial} \colon Y \hookrightarrow Z$ where
$u_{\partial} \colon Y \hookrightarrow \H$ denotes the unit inclusion in the groupoid $\H$.

\item The normal bundle $\N^{\G} Y$ with regard to the inclusion $\sigma \circ \varrho \circ u_{\partial} \colon Y \hookrightarrow \G$. 

\end{itemize}

We consider the following Lagrangians
\begin{align*}
\Lambda_{\Psi} &:= \A^{\ast}(\G) \subset T^{\ast} \G \setminus 0, \\
\Lambda_{\partial} &:= \A^{\ast}(\H) \subset T^{\ast} \H \setminus 0, \\
\Lambda_g &:= (\N^{\G} \Delta_Y)^{\ast} \subset T^{\ast} \G \setminus 0, \\
\Lambda_b &:= (\N^{Z} \Delta_Y)^{\ast} \subset T^{\ast} Z \setminus 0, \\
\Lambda_c &:= (\N^{Z^t} \Delta_Y)^{\ast} \subset T^{\ast} Z^t \setminus 0. 
\end{align*}

\begin{Lem}
The Lagrangians $\Lambda_b, \Lambda_c = \Lambda_b^t$ and $\Lambda_g$ are local admissible family $\G$-relations.
\label{Grels}
\end{Lem}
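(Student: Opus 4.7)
The plan is to verify separately, for each of $\Lambda_b$, $\Lambda_c$, $\Lambda_g$, the three conditions: being locally a Lagrangian submanifold, admissibility in the sense of Definition \ref{Def:adm}, and the family $\G$-relation property of Definition \ref{Def:famGrel}. The basic principle is that these Lagrangians are built as twisted conormal bundles to an object ($\Delta_Y$, the image of $Y$ embedded as units) which is itself compatible with the groupoid structure, so each property should reduce to a coordinate computation entirely analogous to the smooth no-corners case of Section \ref{section:KN}.

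The Lagrangian property is immediate: twisted conormal bundles of smoothly embedded submanifolds of a smooth manifold are Lagrangian in the standard symplectic structure on the cotangent bundle. Working in a chart of $\G$ (resp.\ $Z$, $Z^t$) in which the corner structure is trivialized, each $\Delta_Y$ is locally cut out by the vanishing of some transverse coordinates, so its conormal has the local form found in \eqref{eq:lagcoord} and is locally Lagrangian.

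For admissibility, the key input is Proposition \ref{Prop:adm}(ii): near $Y$ we have a direct sum splitting $\A_{|Y} \cong \B \oplus \N$ which, together with the normal form of the $\G$-source fibration, provides local coordinates on $\G$ in which $\tilde{s}$, $\tilde{r}$ (and for $Z$ the map $\tilde{q}$) act as linear projections determined by the algebroid structure. In these adapted coordinates the kernels $\ker \tilde{s}$, $\ker \tilde{r}$, $\ker \tilde{q}$ consist of covectors vanishing on the respective algebroid directions, while the covectors in $\Lambda_b$, $\Lambda_c$, $\Lambda_g$ carry nonzero components in the transverse $\N$-direction of $Y \hookrightarrow M$ (i.e.\ the $\xi''$, $\eta''$ slots in the notation of \eqref{eq:lagcoord}); since we have excised the zero section, the relevant intersections are empty.

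For the family $\G$-relation property, I would apply Theorem \ref{Thm:famGrel} by producing explicit equivariant families. Set $\Lambda_{g,x} := m_x^{\ast} \Lambda_g$ and $\Lambda_{b,x} := \alpha_x^{\ast} \Lambda_b$; Lemma \ref{Lem:adm} combined with the admissibility just verified ensures these are Lagrangian in each $s$-fibered product. Equivariance $(c_\gamma)^{\ast} \Lambda_{g,y} = \Lambda_{g,x}$ and $(\tilde{c}_\gamma)^{\ast} \Lambda_{b,y} = \Lambda_{b,x}$ is then inherited from the invariance of $\Delta_Y$ under the left/right groupoid multiplication: the defining submanifold depends only on $Y$ and the unit section, both of which are preserved by $c_\gamma$ and $\tilde{c}_\gamma$ respectively. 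The transpose case $\Lambda_c = \Lambda_b^t$ follows by applying $d\flip$, which interchanges the principal left $\G$-action on $Z^t$ with the right $\G$-action on $Z$ and hence transports the equivariant family of $\Lambda_b$ to an equivariant family for $\Lambda_c$; alternatively one invokes Theorem \ref{Thm:transpose} for the convolvability aspect.

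The main obstacle that requires care, rather than any deep argument, is to ensure that the adapted coordinate descriptions used above actually extend smoothly across corners of $M$ and $Y$. This is where admissibility of $j$ in $\EmbV$ enters: the bijective face correspondence $\varphi(j)$ guarantees that the splitting $\A_{|Y} \cong \B \oplus \N$ from Proposition \ref{Prop:adm} is compatible with the stratification, so that the transverse $\N$-directions do not interact with the corner defining functions. Once this is observed the three verifications collapse to exactly the same conormal/equivariance bookkeeping as in the classical smooth setting.
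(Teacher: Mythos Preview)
Your argument is essentially correct, and for locality and admissibility it matches the paper's (equally terse) treatment: both just observe that conormal bundles are Lagrangian and that the no-zero condition is immediate. Your specific phrasing for admissibility---that every nonzero conormal covector ``carries a nonzero component in the transverse $\N$-direction''---is not literally accurate (a covector in $(\N^{Z}\Delta_Y)^*$ may well have vanishing $\N^*$-component and nonzero $\A_\partial^*$-component), but the conclusion is right: what matters is that $T\Delta_Y$ together with the $q$-fiber (resp.\ $s$-, $r$-fiber) directions span, so a covector annihilating both is zero.

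The genuine difference is the route you take for the family $\G$-relation property. The paper verifies Definition \ref{Def:famGrel} directly: since $\Lambda_b=(\N^{Z}\Delta_Y)^*$ is conormal to $\Delta_Y\cong Y$, the projection $\tilde{\pi}\circ\tilde{\pr}$ restricted to $\alpha^*(\Lambda_b)$ is a submersion onto $Y$, and likewise $\pi\circ\pr$ restricted to $m^*(\Lambda_g)$ is a submersion; the transpose case follows from $\Lambda_c=\Lambda_b^t$. You instead invoke the characterization Theorem \ref{Thm:famGrel}, setting $\Lambda_{\bullet,x}:=m_x^*\Lambda_\bullet$ (resp.\ $\alpha_x^*\Lambda_\bullet$) and checking equivariance. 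Note that once you have done this, the equivariance $(c_\gamma)^*\Lambda_{g,y}=\Lambda_{g,x}$ is in fact automatic from the commuting triangle $m_y\circ c_\gamma=m_x$ recorded just above Definition \ref{Def:famGrel}, so your appeal to ``invariance of $\Delta_Y$'' is not actually needed; the substantive input is only Lemma \ref{Lem:adm} plus admissibility. Your path is therefore valid but longer than the paper's one-line verification of the submersion condition. (Your remark about Theorem \ref{Thm:transpose} is a slight non sequitur: that theorem concerns convolvability, not the family $\G$-relation property.)
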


\begin{proof}
Note that locality and admissibility are immediate by the definition of the Lagrangians as conormal bundles.
Consider the case $\Lambda_b = (\N^{Z} \Delta_Y)^{\ast}$, then $\Lambda_b$ is transverse to the diagonal $\Delta_Y \cong Y$ and hence 
$\tilde{\pi} \circ \tilde{\pr}_{|\alpha^{\ast}(\Lambda_b)}$, as defined in the previous section, is a submersion. Therefore $\Lambda_b$ is a family $\G$-relation by definition.
The case $\Lambda_c$ follows since $\Lambda_c = \Lambda_b^t$.  
Next consider $\Lambda_g = (\N^{\G} \Delta_Y)^{\ast}$ which is transverse to the diagonal $\Delta_Y \cong Y$ and hence
$\pi \circ \pr_{|m^{\ast}(\Lambda_g)}$, as defined in the previous section, is a submersion. Therefore $\Lambda_g$ is a family $\G$-relation. 
\end{proof}

We equip these Lagrangians with symbol spaces which are a not the usual H\"ormander classes, but behave slightly differently with respect to the natural groups of variables in these Lagrangians, see Section \ref{section:KN}.
In order to define these non-standard symbol spaces we first describe certain vector fields. 

Note that we have the non-canonical decompositions, cf. Proposition \ref{Prop:adm}
\begin{align}
& \N^{Z} \Delta_Y \cong \A_{\partial} \oplus \N, \label{decomp1} \\
& \N^{Z^t} \Delta_Y \cong \A_{|Y} \oplus \N \cong \A_{\partial} \oplus \N \oplus \N, \label{decomp2}. 
\end{align}

We call $\W = \Gamma(\A(\H))$ the \emph{tangent} vector fields, $\Gamma(\N)_{(1)}$ transversal \emph{of the first kind} and $\Gamma(\N)_{(2)}$ transversal \emph{of the second kind}. 

Fix $n := \dim(M), \ n-\nu = \dim(Y)$ and let $\pi_1 \colon \A^{\ast}(\H) \times \A^{\ast} \to \A^{\ast}(\H)$ be the first projection.

Consider on $(\N^{Z} \Delta_Y)^{\ast}$ vector fields $V$ such that $\pi_{1 \ast}(V) = 0$ which are called \emph{normal to $\A^{\ast}(\H)$} as well as vector fields $V$ such
that $\pi_{1\ast}(V)$ is well-defined vector field on $\A^{\ast}(\H)$ which are called \emph{lifted from $\A^{\ast}(\H)$}. 

\begin{Def}
Define the symbols space $S^{l,k}(\Lambda_b)$ for $k, l \in \Rr, \ a \in C^{\infty}(\Lambda_b)$ such that if $\{V_1, \cdots, V_j\}$ are vector fields on $\Lambda_b$, each $V_i$ homogenous of degree $0$ or $1$ with
\[
|V_1 \cdots V_j a(\alpha)| \leq C (1 + |\alpha|)^{l - j_1} (1 + |\pi_1(\alpha)|)^{k-j_2}. 
\]

Here $j_1$ is the number of first degree homogenous vector fields which are normal to $\A^{\ast}(\H)$ and $j_2$ denotes the number of first degree homogenous vector fields which are lifted
from $\A^{\ast}(\H)$. 

\label{Def:Sb}
\end{Def}

\begin{Def}
Define $S^{m,k,l}(\Lambda_g)$ for $m,k,l \in \Rr$ such that 
\[
|V_1 \cdots V_j a(\alpha)| \leq C(1 + |\pi_1(\alpha)|)^{m-j_1} (1 + |\pr(\alpha)|)^{k-j_2} (1 + |\pi_2(\alpha)|)^{l-j_3}. 
\]

Here $j_1$ denotes the number of first degree homogenous vector fields which are transversals of the first kind, $j_2$ denotes the number of first degree homogenous vector fields which 
are tangent and $j_3$ denotes the number of first degree homogenous vector fields which are transversals of the second kind. 

\label{Def:Sg}
\end{Def}

The \emph{classical} symbols in these classes, namely those that admit polyhomogeneous expansions, have an invariant description which uses a radial compactification of the Lagrangian manifolds. 
We use here the notation for the corresponding boundary defining functions and blow-down map $\beta$ as specified in the appendix \ref{sec:symbinv}.
\begin{Thm}
\emph{i)} The classical symbols space of boundary operators identifies via the radial blowup as
\[
S_{cl}^{k,l}(\Lambda_b) \cong \beta^*\left(\rho^{l-k}\rho_{ff}^{-l} C^{\infty}(\hat{\Lambda}_b)\right). 
\]

\emph{ii)} The classical symbols space of Green operators identifies 
\[
S_{cl}^{m,k,l}(\Lambda_g) \cong \beta^*\left(\rho_\tau^{l-k}\rho_{\tau^\prime}^{l-m}\rho_{ff}^{-l} C^{\infty}(\hat{\Lambda}_g)\right). 
\]

\label{Thm:Sinv}
\end{Thm}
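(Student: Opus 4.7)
The plan is to reduce each statement to a local model computation in adapted coordinates and then patch globally via a partition of unity. Since the Lagrangians in question are conormal bundles of embedded submanifolds and the radial blowup $\beta \colon \hat{\Lambda} \to \overline{\Lambda}$ is the one constructed in Appendix \ref{sec:symbinv}, the identification is an instance of the general correspondence between classical polyhomogeneous symbols on a vector bundle (with several orders of decay attached to a filtration of the fibers) and smooth sections on a suitable radial blowup, weighted by powers of the boundary defining functions.

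For part (i), I would first choose local adapted coordinates using the decomposition $\N^{Z} \Delta_Y \cong \A_{\partial} \oplus \N$ from Proposition \ref{Prop:adm}, so that a point of $\Lambda_b$ over a neighbourhood of $Y$ is written as $(x',\xi',\eta'')$ with $\pi_1(x',\xi',\eta'') = (x',\xi')$. The Definition \ref{Def:Sb} then rewrites as the double symbol estimate
\[
|\partial_{x'}^\alpha\partial_{\xi'}^\beta\partial_{\eta''}^\gamma a(x',\xi',\eta'')| \leq C_{\alpha\beta\gamma}(1+|\xi'|)^{k-|\beta|}(1+|\xi'|+|\eta''|)^{l-|\gamma|},
\]
so that the classical subclass is precisely the space of symbols admitting homogeneous expansions both along rays in $(\xi',\eta'')$ and along rays in $\xi'$ alone.

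Second, I would describe the model blowup locally: the radial compactification of $\A^*(\H)_x \oplus \N_x$ has two boundary hypersurfaces at infinity meeting at a corner, and the radial blowup $\hat{\Lambda}_b$ is obtained by blowing up this corner (alternatively the subspace $\{\xi'=\infty,\ \eta''\ \text{bounded}\}$), producing a manifold with corners whose boundary has a front face with defining function $\rho_{ff}$ and a lateral face with defining function $\rho$. The standard identification for multi-order polyhomogeneous symbols (as in the SG calculus and its blowup description recorded in the appendix) says that a symbol $a$ lies in $S^{k,l}_{cl}(\Lambda_b)$ if and only if $\rho_{ff}^{l}\rho^{k-l}\beta^* a$ extends to a smooth function on $\hat{\Lambda}_b$. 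A Taylor expansion argument at $\rho_{ff}=0$ and $\rho=0$ produces the two polyhomogeneous expansions from a smooth function on the blowup, and conversely the asymptotic compatibility of these expansions along the corner is precisely what is needed to glue them into a single smooth section after blowup.

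For part (ii), the identical strategy applies, but now with the decomposition $\N^{\G}\Delta_Y \cong \A_{\partial}\oplus \N \oplus \N$ of Proposition \ref{Prop:adm}, so that points of $\Lambda_g$ are written as $(x',\xi',\tau,\tau')$ with $\tau$ transversal of the first kind (the fiber of $\pi_1$) and $\tau'$ of the second kind (the fiber of $\pi_2$). The three homogeneity weights $(m,k,l)$ correspond to the three directions to infinity along $\tau$, $(\xi',\tau)$, and $\tau'$, so that $\hat{\Lambda}_g$ has three boundary hypersurfaces with defining functions $\rho_\tau,\rho_{\tau'},\rho_{ff}$, and the same type of Taylor/asymptotic argument yields the stated weighted-smooth identification.

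The main obstacle is the corner combinatorics of the blowup: one must check that the several polyhomogeneous expansions implicit in Definitions \ref{Def:Sb} and \ref{Def:Sg}, each attached to a different group of fiber variables, fit together consistently along the front face to produce a genuinely smooth function on $\hat{\Lambda}_b$ (respectively $\hat{\Lambda}_g$) after clearing the weight factors. This is the standard technical heart of such a result and is handled precisely by the blowup construction sketched in Appendix \ref{sec:symbinv}, whose conventions fix the exponents $l-k$, $l-m$, and $-l$ appearing in the statement.
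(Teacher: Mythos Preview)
Your overall strategy matches the paper's: the proof is really the content of Appendix \ref{sec:symbinv}, namely pass to adapted local coordinates coming from the splitting of the normal bundle, radially compactify the fibres, perform a blowup, and identify polyhomogeneous symbols with weighted $C^\infty$ functions on the resulting manifold with corners. Your deferral to the appendix for the combinatorics of the corner is appropriate.

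However, your concrete description of the blowup for $\Lambda_b$ is reversed. The paper takes the \emph{joint} radial compactification of the fibre $\A^{\ast}(\H)_x \oplus \N^{\ast}_x$, which is a ball with a \emph{single} boundary sphere (not ``two boundary hypersurfaces meeting at a corner'' as you write), and then blows up the boundary of the submanifold $\{\xi' = 0\}$, i.e.\ the locus on the sphere at infinity where the tangential covariable vanishes and only the transversal covariable $\eta''$ survives. You instead propose to blow up $\{\xi' = \infty,\ \eta''\ \text{bounded}\}$, which is the opposite stratum. With your choice the resulting boundary defining functions would not reproduce the weights $(1+|\xi'|)^{k}$ and $(1+|\xi'|+|\eta''|)^{l}$, and the exponents $l-k$, $-l$ in the statement would come out wrong. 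The same confusion carries over to your description of $\hat{\Lambda}_g$: the paper separately compactifies the $(\xi',\tau)$ and $(\xi',\tau')$ fibres and again blows up where $\xi' = 0$ at each boundary; it is this that produces the three faces with defining functions $\rho_\tau,\rho_{\tau'},\rho_{\mathrm{ff}}$.

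Once you correct which submanifold is blown up, the remainder of your argument (vector fields tangent to the faces preserving the weighted spaces, Taylor expansion at the boundary faces yielding the polyhomogeneous expansions) is exactly the mechanism the appendix uses.
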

\begin{Rem}
Since the embedding $Y \hookrightarrow M$ is assumed to be admissible, by the tubular neighborhood theorem \cite{AIN} we have $M \cong Y \times \Rr^{\nu}$ where $\nu$ is the codimension of the embedding. 
Hence $\G$ locally takes the form $\G = \G_Y^Y \times \Rr^{\nu} \times \Rr^{\nu}$
Similarly, $Z \cong \G_Y^Y \times \Rr^{\nu}$ and $\Zop \cong \Rr^{\nu} \times \G_Y^Y$. 
Under this assumption we can locally identify the calculus $\PsiZ$ with $\Psi(\G_Y^Y)$. 
\label{Rem:PsiZ}
\end{Rem}

We define the algebra $\Psi_{NC}(\G, \H)$ which depends on the generalized morphism $\H \dashrightarrow \G$. 

\begin{Def}
Let $\H \dashrightarrow \G$ be the generalized morphism of Lie groupoids which is implemented by the groupoid actions
on the fiberwise smooth spaces $Z \cong Z^t$, cf. Fig. \ref{fig:main}. Fix the orders $(m_g, k_g, k_c, k_b) \in \Rr^4$ and the codimension $\nu := \codim(Y)$.
We set $l_g = -m_g - k_g - k, \ l_b = -k_b - \frac{\nu}{2}, \ m_c = -k_c - \frac{\nu}{2}$ and we let 
$k_g > 0, \ k_c > 0, \ k_b > 0, \ m_g < -\frac{\nu}{2}, \ m_g + k_g > -\frac{\nu}{2}$.  

Denote by $\A(m_g, k_g, k_c, k_b)$ the set of matrices of the form
\[
\begin{pmatrix} U & C \\
B & S \end{pmatrix}, \ U \in \U_{cl}^{m_g, k_g, l_g}(\G), \ C \in \C_{cl}^{m_c, k_c}(\Zop), \ B \in \B_{cl}^{k_b, l_b}(Z), \ S \in \Psi_{cl}(\H).
\]
whose entries belong to the operator classes
\begin{align*}
(\textrm{singular Green})\qquad	& \U_{cl}^{m_g, k_g, l_g}(\G) := I_{cl}^{m_g, k_g, l_g}(\G, \Lambda_g) \\ 
(\textrm{boundary})\qquad		& \B_{cl}^{k_b,l_b}(Z) := I_{cl}^{k_b, l_b}(Z, \Lambda_b)\\ 
(\textrm{co-boundary})\qquad		& \C_{cl}^{m_c, k_c}(\Zop) := I_{cl}^{m_c, k_c}(\Zop, \Lambda_c)\\ 
(\textrm{pseudodifferential})\qquad		& \PsiZ := I_{cl}^{0}(Z, \widetilde{\Lambda}_{\partial}) 
\end{align*}
here $I_{cl}$ denote the spaces obtained from application of Kohn-Nirenberg quantization to the corresponding symbol classes.
We finally define the algebra of \textit{Green operators} as
\[
\Psi_{NC}(\G, \H) := \sum \A(m_g, k_g, k_c, k_b)
\]

where the sum is a non-direct sum over all tuples $(m_g, k_g, k_c, k_b)$.

\label{Def:operators}
\end{Def}

\begin{Thm}
Given the generalized morphism $\H \dashrightarrow \G$ then $\Psi_{NC}(\G, \H)$ is an associative $\ast$-algebra. 
\label{Thm:algebra}
\end{Thm}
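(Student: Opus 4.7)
The plan is to verify closure under matrix multiplication and under the $\ast$-operation, whence associativity will follow from associativity of convolution on the underlying groupoid. Consider two operators $A_i = \begin{pmatrix} U_i & C_i \\ B_i & S_i \end{pmatrix} \in \Psi_{NC}(\G, \H)$ for $i = 1,2$. Matrix multiplication yields eight composition problems (one for each non-trivial entry of Table \ref{tab:Lagrangians}):
\[
U_1 U_2,\ C_1 B_2,\ U_1 C_2,\ C_1 S_2,\ B_1 U_2,\ S_1 B_2,\ B_1 C_2,\ S_1 S_2,
\]
each of which must be shown to lie in the appropriate summand of $\Psi_{NC}$.

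First I would verify, for each of the eight pairs of Lagrangians arising above, the two geometric hypotheses required to apply the groupoid FIO calculus of Section \ref{FIO}: admissibility in the sense of Definition \ref{Def:adm} (immediate since every $\Lambda_\bullet$ is a conormal bundle and hence disjoint from the relevant kernels), and clean convolvability in the sense of Definition \ref{Def:conv}, the latter being a local check using the induced CDW-actions displayed in Figure \ref{fig:main2}. Combined with the family $\G$-relation property from Lemma \ref{Grels}, Theorem \ref{Thm:LMV6} then yields a Lagrangian distribution on the appropriate space ($\G$, $Z$, $\Zop$, or $\H$), whose principal symbol is computed by the twisted product of Proposition \ref{Prop:twisted}. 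Admissibility of the embedding, via the tubular neighborhood of Proposition \ref{Prop:adm} and Remark \ref{Rem:PsiZ}, allows me to reduce these local computations to the classical model of Section \ref{section:KN}, where the corresponding compositions were carried out in \cite{NS}.

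To confirm that the resulting distributions genuinely lie in the declared symbol classes, I would track the indices through the twisted product using the anisotropic spaces of Definitions \ref{Def:Sb} and \ref{Def:Sg}. The order bounds $k_g, k_b, k_c > 0$, $m_g < -\nu/2$, and $m_g + k_g > -\nu/2$ imposed in Definition \ref{Def:operators} are precisely the integrability conditions that make the fiber integrals defining the twisted product convergent, and they force the composed orders to obey the prescribed relations $l_g = -m_g - k_g - k$, $l_b = -k_b - \nu/2$, $m_c = -k_c - \nu/2$. For the $\ast$-operation I would use that $\Lambda_g$ and $\Lambda_\partial$ are self-transpose as conormal bundles of diagonal submanifolds, while $\Lambda_c = \Lambda_b^t$ by definition (cf.\ Theorem \ref{Thm:transpose}); this shows that the adjoint exchanges the off-diagonal classes $\B$ and $\C$ and preserves $\U$ and $\PsiZ$, with closure under $\ast$ following from the symmetry of the symbol classes under the antipode of the groupoid. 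Associativity then reduces to associativity of convolution of $\Lambda$-distributions, itself inherited from the groupoid multiplication via Remark \ref{Rem:CDW}.

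\textbf{The main obstacle} is the precise bookkeeping of tri-indices in the composition $\Lambda_c \circ \Lambda_b$, which is the distinguishing feature producing a singular Green operator from a boundary/co-boundary pair. Here one must verify not only convergence of the twisted product but also that the resulting microlocal behavior matches the anisotropic pattern of $S^{m,k,l}(\Lambda_g)$, and this is where the normal-versus-tangential vector field decomposition encoded in \eqref{decomp1}--\eqref{decomp2} plays its essential role.
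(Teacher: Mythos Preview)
Your outline is correct in spirit and would succeed if carried out, but it differs from the paper's route in one structural respect worth flagging. The paper does \emph{not} verify all eight compositions separately. Instead it establishes, via the explicit Kohn--Nirenberg oscillatory integral representations of $U$, $B$, $C$ with phase functions coming from the normal fibrations $\Phi$, $\Psi$, $\Psi^t$, the \emph{factorization} lemma: every singular Green operator can be written as $U = C \cdot B$ for suitable $C \in \C$ and $B \in \B$, because the twisted product $S_{cl}^{m_1,k_1}(\Lambda_c) \times S_{cl}^{k_2,l_2}(\Lambda_b) \to S_{cl}^{m_1,k_1+k_2,l_2}(\Lambda_g)$ is surjective. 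This single observation collapses much of the work: closure of $\U$ under $\ast$ becomes the one-line computation $U^\ast = (CB)^\ast = B^\ast C^\ast$, and closure of $\U$ under composition reduces to the two elementary products $B \cdot C \in \PsiZ$ and $C \cdot B \in \U$ together with the module property over $\Psi^0(\G)$. The paper also remarks explicitly that Theorem \ref{Thm:LMV6} cannot be invoked directly for the anisotropic symbol classes, and handles this by computing with the concrete KN phase functions rather than by abstract bookkeeping.

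Your systematic approach---check each of the eight matrix entries against Table \ref{table:Symbols} and verify the anisotropic symbol estimates by reduction to the local model of Section \ref{section:KN} via Remark \ref{Rem:PsiZ}---is a legitimate alternative and arguably more transparent about exactly which compositions occur. What the paper's factorization buys is economy (two compositions instead of eight) and a direct route to the later $L^2$-boundedness Lemma \ref{Lem:L2}, where the same factorization is reused. What your approach buys is that you never need surjectivity of the twisted product, only its well-definedness on the anisotropic classes; this is a slightly weaker analytic input.
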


\begin{proof}
The proof of this theorem is based on the convolution of the corresponding Lagrangians and the twisted product
of the symbols. However, we cannot directly use Theorem \ref{Thm:LMV6} because of the non-standard symbol classes employed.
We first check the conditions for convolvability, in particular Definition \ref{Def:conv} as well
as Theorem \ref{Thm:transpose}.

We refer to table \ref{tab:Lagrangians} in section \ref{section:KN} for the compositions of the local Lagrangian submanifolds as well as
table \ref{table:Symbols} for the twisted products of the symbols.

In order to study the composition more clearly, we express the distributional kernels of our operators using the Kohn-Nirenberg phase functions. These are obtained by use of the appropriate normal fibrations and uniquely determined by that choice.
Let $\Phi \colon \G \to \N^{\G} \Delta_Y$ be the singular Green normal fibration associated to the embedding $\Delta_Y \hookrightarrow \G$. 
Then $U \in \U(\G)$ using Remark \ref{Rem:PsiZ}, \emph{ii)} can be written in the form
\[
(U f)(\gamma', \gamma_n) = \int_{\G_{s(\gamma)}} \int_{(\N^{\G} \Delta_Y)_{r_{\partial}(\gamma')}^{\ast}} e^{i \scal{\Phi(\gamma \eta^{-1})}{\xi}} u(r_{\partial}(\gamma'), \xi) f(\eta) \,d\xi\,d\mu_{s(\gamma)}(\eta)
\]

where $f \in C_c^{\infty}(\G), \ \gamma = (\gamma', \gamma_n) \in \G = \H \times \Rr^{\nu} \times \Rr^{\nu}$ and with phase function $\varphi(\gamma, \theta) = \scal{\Phi(\gamma)}{\theta}$. 
Let $\Psi \colon Z \to \N^{Z} \Delta_Y$ be the normal fibration of the embedding $\Delta_Y \hookrightarrow Z$.
Then $B \in \B(Z)$ is given by 
\[
(B f)(z) = \int_{\G_{q(z)}} \int_{(\N^{Z} \Delta_Y)_{p(z)}^{\ast}} e^{i \scal{\Psi(z \gamma^{-1})}{\xi}} b(q(z), \xi) f(\gamma) \,d\xi \,d\mu_{q(z)}(\gamma)
\]
with phase function $\psi(z, \theta) = \scal{\Psi(z)}{\theta}$. 
Analogously, $C \in \C(\Zop)$ is given by 
\[
(Cf)(\gamma', \gamma_n) = \int_{Z_{s(\gamma)}} \int_{(\N^{\Zop} \Delta_Y)_{r_{\partial}(\gamma')}^{\ast}} e^{i\scal{\Psi^t(\gamma^{-1} z)}{\xi}} c(r_{\partial}(\gamma'), \xi) f(z) \,d\xi \, d \lambda_{s(\gamma)}^t(z)
\]
where $f \in C_c^{\infty}(Z), \ \gamma = (\gamma', \gamma_n) \in \G$.
We denote by $\Psi^t \colon \Zop \to \N^{\Zop} \Delta_Y$ the corresponding normal fibration with phase function
given by $\psi^t(z, \theta) = \scal{\Psi^t(z)}{\theta}$. 
We have that $\U(\G)$ is a $\Psi^{0}(\G)$-module, since $\Lambda_{\Psi} \circ \Lambda_g = \Lambda_g, \ \Lambda_g \circ \Lambda_{\Psi} = \Lambda_g$ and 
by the properties of the twisted products. By explicit calculation on principal symbol level using the operator representations above it is straight-forward to verify that the twisted symbol product formula is also valid for the non-standard symbol classes.\\
Each element of $\U(\G)$ can be put into the form $C \cdot B$ for a $B \in \B(\G, \H)$ and a $C \in \C(\G, \H)$.  
In order to see this, note that with $\Lambda_g = \Lambda_c \circ \Lambda_b$ we have
\[
S_{cl}^{m_1, k_1 + k_2, l_2}(\Lambda_g) = S_{cl}^{m_1, k_1}(\Lambda_c) \ast S_{cl}^{k_2, l_2}(\Lambda_b).
\]
Then by the explicit form of the phase functions and the quantization the same holds on the level of operators.
Hence
\[
\U_{cl}^{m_1, k_1 + k_2, l_2}(\G) \cong \C_{cl}^{m_1, k_1}(\Zop) \circ \B_{cl}^{k_2, l_2}(Z). 
\]
The topology on $\U_{cl}^{m_1, k_1 + k_2, l_2}(\G)$ is defined by the Fr\'echet topologies induced by the semi-norm
systems defined by the invariant estimates and the topology induced by the homogenous components in the classical expansions
of these operators. This shows that a composition of coboundary and boundary operators results in a singular Green operator. 
The other asssertion follows by observing that the assignment $(a, b) \mapsto a \ast b$ is a surjective map $S_{cl}^{m_1, k_1}(\Lambda_c) \times S_{cl}^{k_2, l_2}(\Lambda_b) \to S_{cl}^{m_1, k_1 + k_2, l_2}(\Lambda_g)$.
Again the same holds on the level of operators by the explicit quantization. 
Let $B \in \B(Z)$ and $C \in \C(\Zop)$, then $B \cdot C$ is a pseudodifferential operator on the boundary,
i.e. $B \cdot C \in \PsiZ$ which follows by direct verification using the explicit form of these operators and their phase functions. We also have $\Lambda_b \circ \Lambda_c = \Lambda_{\partial}$ and the twisted products on the symbol spaces.
The previous discussion yields in particular that $\U(\G)$ is closed under composition. 
For each $B \in \B(Z)$ there is a $C \in \C(\Zop)$ such that $B = C^{\ast}$ (formal adjoint) and vice versa.
In order to check that $\Lambda_b = \Lambda_c^{\ast}$ write the kernel of the operator $B$ in the form
\begin{align}
B(z) &= \int e^{-i \Psi(z, \theta)} b(z, \theta)\,d\theta. \label{Bdy}
\end{align}
The formal adjoint is written $C = B^{\ast}$
\begin{align}
C(z) &= \int e^{i \Psi^t(z, \theta)} c(z, \theta) \,d\theta. \label{CoBdy}
\end{align}
 
We note that $c(z, \theta) = \overline{b(\flip(z), d\flip_{\flip(z)} \theta)}$ is the corresponding symbol.
Additionally, $\Psi^t(z, \theta) = -\Psi(\flip(z), d\flip_{\flip(z)} \theta)$ defines a non-degenerate phase function.
The corresponding Lagrangian is given by
\[
\Lambda_{\Psi^t} = \{(z, \xi) \in T^{\ast} Z : (\flip(z), -(d^t \flip_z)(\xi)) \in \Lambda_{\Psi}\}. 
\]

Note that $(\U(\G), \ast)$ is an associative $\ast$-algebra.
This follows since $U^{\ast} = (CB)^{\ast} = B^{\ast} C^{\ast}$. 
The combination of these facts yields the closedness under composition. 
\end{proof}

Note that it is not hard to show more: The collection of operators of the type $\begin{pmatrix} P + U & C \\ B & S \end{pmatrix}$,
where $U, C, B, S$ are Green operators as defined in the previous Theorem and $P \in \Psi^{0}(\G)$ is a pseudodifferential operator on $\G$,
also forms an algebra.
To see this, observe that by the rules of composition $\U(\G)$, the class of singular Green operators, is a $\Psi^{0}(\G)$-module and
thus $\{P + U : P \in \Psi^{0}(\G), \ U \in \U(\G)\}$ forms an algebra. 

\begin{Lem}
\emph{1)} A singular Green operator $U \in \U(\G)$ is a bounded linear operator $U \colon L^2(\G) \to L^2(\G)$. 

\emph{2)} A boundary operator $B \in \B(\G, \H)$ is a bounded linear operator $B \colon L^2(\G) \to L^2(Z)$.

\emph{3)} A coboundary operator $C \in \C(\G, \H)$ is a bounded linear operator $C \colon L^2(Z) \to L^2(\G)$. 

\label{Lem:L2}
\end{Lem}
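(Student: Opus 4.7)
The plan is to reduce each $L^2$-boundedness statement to a uniform fibrewise estimate and then to invoke the standard-case Proposition stated in Section \ref{section:KN}. First, since the source maps of $\G$ and of $Z$ (fibered via the charge map $q$) are tame submersions, the fibres $\G_x$ and $Z_x$ are smooth manifolds without corners by Lemma \ref{Lem:subm}. Combining the right-invariance of the families $U = (U_x)$, $B = (B_x)$, $C = (C_x)$ with a smooth right-invariant Haar system $(\mu_x)_{x \in \Gop}$ and a smooth transverse measure $\lambda$ on $\Gop = M$, one obtains a direct-integral decomposition of the $L^2$-norm over fibres, so that $L^2$-boundedness of $U$, $B$, $C$ follows from bounds on $\|U_x\|$, $\|B_x\|$, $\|C_x\|$ that are uniform in $x \in M$.

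Next, I would exploit the explicit Kohn--Nirenberg kernel representations recorded in the proof of Theorem \ref{Thm:algebra}. In local coordinates adapted to the tubular neighbourhood of $Y$ guaranteed by Proposition \ref{Prop:adm}, the phase functions defined by the normal fibrations $\Phi$ and $\Psi$ restrict on a single fibre to the Kohn--Nirenberg phases of Section \ref{section:KN}, and the amplitudes lie in the symbol classes $S^{m_g,k_g,l_g}(\Lambda_g)$, $S^{k_b,l_b}(\Lambda_b)$, $S^{k_c,m_c}(\Lambda_c)$. Under the order thresholds
\[
k_g, k_c, k_b > 0, \qquad m_g < -\tfrac{\nu}{2}, \qquad m_g + k_g > -\tfrac{\nu}{2},
\]
the Proposition of Section \ref{section:KN} applied fibrewise gives $L^2$-boundedness of each $U_x$, $B_x$, $C_x$, with operator norm controlled by finitely many symbol seminorms of the amplitude.

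To pass from fibrewise bounds to a uniform estimate, I would exploit the compactness of $M$ together with the uniform support condition of Definition \ref{Def:Gpsdo}. Since the amplitudes $u$, $b$, $c$ depend smoothly on the base point and the reduced support is compact, the relevant symbol seminorms are bounded uniformly in $x$, after which a partition-of-unity argument assembles the fibrewise $L^2$-bounds into the desired global estimates on $L^2(\G)$ and $L^2(Z)$.

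Finally, the three statements are tied together by the algebra of Theorem \ref{Thm:algebra}: the factorization $\U \cong \C \circ \B$ together with the adjoint identity $C = B^{\ast}$ let us reduce (1) and (3) to (2), so the heart of the argument is the boundary-operator case. The main obstacle will be controlling the symbol seminorms uniformly up to the boundary faces of $M$; this is precisely where admissibility of $j$ (the bijectivity of the face correspondence $\varphi(j)$) enters crucially, since it guarantees a globally consistent choice of normal fibration compatible with the groupoid action and hence uniform behaviour of phase and amplitude across the stratification.
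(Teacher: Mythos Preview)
Your route is genuinely different from the paper's, and it carries a gap that the paper's argument avoids. You propose to disintegrate over the $s$-fibres $\G_x$, $Z_x$ and then invoke the Proposition of Section~\ref{section:KN} on each fibre. Two problems arise. First, that Proposition is stated for an embedding of \emph{compact} closed manifolds, whereas the fibres $\G_x$ of an integrating groupoid of a Lie manifold with nontrivial corners are noncompact (for the $b$-groupoid, for instance, $\G_x \cong M_0$ over interior points and $\partial M \times \Rr_+$ over boundary points). You would therefore need a noncompact version of the Proposition with operator norm controlled by symbol seminorms \emph{uniformly in the fibre}, and nothing in the paper supplies this. Second, the paper does not prove the Section~\ref{section:KN} Proposition independently: it explicitly defers its proof to the groupoid setting, so invoking it here is circular.

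The paper's proof sidesteps both issues by working globally and algebraically rather than fibrewise. From Theorem~\ref{Thm:algebra} one knows that $S := B \cdot B^{\ast}$ lies in $\PsiZ$ (i.e.\ $\Lambda_b \circ \Lambda_c = \Lambda_{\partial}$ and the twisted product lands in the order-zero symbol class); $L^2$-boundedness of order-zero $\G$-invariant pseudodifferential operators on $Z$ is a standard result from the groupoid calculus. Then
\[
\langle B^{\ast} f, B^{\ast} f\rangle_{L^2(\G)} = \langle f, B B^{\ast} f\rangle_{L^2(Z)} \le C\|f\|^2
\]
gives boundedness of $B^{\ast}$, hence of every coboundary operator $C$; passing to adjoints gives $B$ bounded; and the factorization $U = C \cdot B$ (which you also note) gives the singular Green case. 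So the only $L^2$-boundedness input needed is the known one for order-zero \emph{pseudodifferential} operators on groupoids, and no fibrewise analysis or uniformity argument is required.
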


\begin{proof}
Since every element of $\U$ is a product of a boundary and coboundary operator, by the proof of Theorem \ref{Thm:algebra}, we see that \emph{2)} and \emph{3)} imply \emph{1)}.

Assertion \emph{2)} follows by an application of the proof of Theorem \ref{Thm:algebra} as follows.
Set $S = B \cdot B^{\ast}$ which is a pseudodifferential operator on the boundary and hence extends
to a bounded linear operator $S \colon L^2(Z) \to L^2(Z)$. Let $f \in L^2(Z)$ then
\[
\scal{B^{\ast} f}{B^{\ast} f}_{L^2(\G)} = \scal{f}{B B^{\ast} f}_{L^2(Z)} \leq C \|f\|^2.
\]

Hence $B^{\ast} \colon L^2(Z) \to L^2(\G)$ is bounded. Since every coboundary operator can be put in the form 
$C = B^{\ast}$ for a boundary operator $B$, by the proof of the Theorem, the continuity of coboundary operators follows.
The same argument proves the assertion for boundary operators. 
\end{proof}

\begin{table}[H]
\caption{Symbols}
\centering

\setlength\tabcolsep{1.5pt}

\begin{tabular}{| c || c | c | c | c | c |}
\hline \hline
$\ast$ & $S_{\partial}^{m_2}$ & $S_{\Psi}^{m_2}$ & $S_b^{k_2, l_2}$ & $S_c^{m_2, k_2}$ & $S_g^{m_2, k_2, l_2}$ \\ [0,5ex] \hline 
$S_{\partial}^{m_1}$ & $S_{\partial}^{m_1 + m_2}$ & $-$ & $S_b^{m_1 + k_2, l_2}$ & $S_c^{m_1 + m_2, k_2}$ & $-$ \\ [0,5ex] \hline
$S_{\Psi}^{m_1}$ & $-$ & $S_{\Psi}^{m_1 + m_2}$ & $-$ & $S_c^{m_1 + m_2, k_2}$ & $S_g^{m_1 + m_2, k_2, l_2}$ \\ [0,5ex] \hline
$S_b^{k_1, l_1}$ & $S_b^{k_1 + m_2, l_1}$ & $S_b^{k_1, l_1 + m_2}$ & $-$ & $S_{\partial}^{k_1 + l_1 + k_2 + m_2 + k}$ & $S_b^{k_1 + l_1 + k_2 + m_2 + k, l_2}$ \\ [0,5ex] \hline
$S_c^{m_1, k_1}$ & $S_c^{m_1, k_1 + m_2}$ & $-$ & $S_g^{m_1, k_1 + k_2, l_2}$ & $-$ & $-$ \\ [0,5ex] \hline
$S_g^{m_1, k_1, l_1}$ & $-$ & $S_g^{m_1, k_1, l_1 + m_2}$ & $-$ & $S_c^{m_1, k_1 + l_1 + k_2 + m_2 + k}$ & $S_g^{m_2, k_1 + l_1 + k_2 + m_2 + k, l_2}$ \\ [1ex]
\hline
\end{tabular}
\label{table:Symbols}
\end{table}

\section{Functoriality properties}

\subsection*{Quantization scheme}

The final task is to show that our construction furnishes a quantized non-commutative completion of $Y\hookrightarrow M$ as defined in Section \ref{sec:noncommcomp}.
We have defined the following functors which preserve Morita equivalence, cf. Theorem \ref{Thm:functoriality} and Theorem \ref{Prop:mod}.
\[
\begin{tikzcd}[row sep=huge, column sep=huge, text height=1.5ex, text depth=0.25ex]
      \displaystyle \LA_b \arrow[swap]{d}{\Gamma} & \arrow{l}{\A^{\ast}} \LG_b \arrow{d}{C^{\ast}} \arrow{ld}{\mathrm{CDW}} \\
\SG_b & C_b^{\ast},  
\end{tikzcd}
\]

given on the level of objects by the assignments
\begin{align}
& \LG_b \ni \G \mapsto \A^{\ast}(\G) \in \LA_b, \label{Astar} \\
& \LG_b \ni \G \mapsto C^{\ast}(\G) \in C_b^{\ast}, \label{Cstar} \\
& \LA_b \ni P \mapsto \Gamma(P) \in \SG_b, \label{Gamma} \\
& \LG_b \ni \G \mapsto T^{\ast} \G \in \SG_b. \label{CDW}
\end{align}

Using the notation of section \ref{nccompl} our scheme for the construction of the operators for relative elliptic problems
can be summarized in the following diagram, Figure \ref{Fig:4}
\begin{figure}[H]
\caption{Order $0$ case}
\centering
\begin{tikzcd}[row sep=huge, column sep=huge, text height=1.5ex, text depth=0.25ex]
    \displaystyle 
    \LA_b \arrow[swap]{dr}{\mathrm{KN}}
    \arrow[bend right=60,swap]{drr}{\Gamma} 
&   \arrow[swap]{l}{\widetilde{\lift}}
    \EmbV
    \arrow{d}{nc}
    \arrow{r}{\lift}
&   \arrow[bend right=30,swap]{ll}{\A^{\ast}}
    \LG_b
    \arrow{d}{\mathrm{CDW}}
\\
&   C_b^{\ast}
&   \arrow{l}{\mathrm{FIO}}
    \SG_b
\end{tikzcd}
\label{Fig:4}
\end{figure}

Where the designation \emph{KN} stands for Kohn-Nirenberg quantization, by which we mean the process
of obtaining an operator from a symbol contained in the symbol spaces defined below.
The right arrow denoted $\lift$ is the lifting of an embedding to a generalized morphism of appropriate Lie groupoids and the left arrow $\liftcl$ is
the corresponding \emph{classical lift}, i.e. the lifting of the embedding to a generalized morphism of dual Lie algebroids.
We have commutativity in the upper part of the diagram, i.e.: $\A^{\ast} \circ \lift = \widetilde{\lift}$.
The square on the right hand side refers to the construction of operators from the generalized morphism of Lie groupoids
from which we obtain a generalized morphism of symplectic (\emph{CDW}) groupoids.
The designation \emph{FIO} stands for the definition of Fourier integral operators as recalled in Section \ref{FIO}.
Finally, \emph{nc} stands for the process of obtaining a non-commutative completion of
$C^{\ast}$-algebras from an embedding of manifolds with corners.

For the pseudodifferential calculus on a Lie groupoid $\G$ we have $\Psi^{-\infty}(\G) \cong C_c^{\infty}(\G)$ and
the $\L(L^2)$-completion of the latter convolution algebra yields: $\overline{C_c^{\infty}(\G)} \cong C^{\ast}(\G)$. 
Hence for the case of residual operators (or smoothing operators) the functorial diagram simplifies to Figure \ref{Fig:5}:

\begin{figure}[H]
\caption{Smoothing operators}
      \centering
     \begin{tikzcd}[row sep=huge, column sep=huge, text height=1.5ex, text depth=0.25ex]
          \displaystyle \LA_b \arrow[swap]{dr}{DQ = KN} & \arrow[swap]{l}{\widetilde{\lift}} \EmbV \arrow{d}{nc} \arrow{r}{\lift} & \arrow[bend right=50,shorten >=10pt]{ll}{\A^{\ast}} \arrow{dl}{C^{\ast}} \LG_b \\
	& C_b^{\ast} &  
     \end{tikzcd}
\label{Fig:5}
\end{figure}
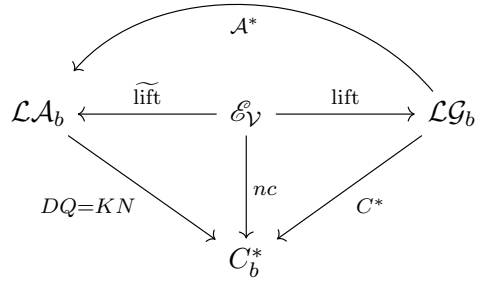

The diagram commutes: $nc = C^{\ast} \circ \lift$ and $\A^{\ast} \circ \lift = \widetilde{\lift}$. 
We recover the Kohn-Nirenberg or right quantization $DQ = KN$, where $DQ$ stands for deformation quantization as described
in \cite{LR}. 

\subsection*{Representation}

A \emph{representation} of our calculus in order $0$ is a $\ast$-homomorphism $\varrho_{\Phi} \colon \Phi(\G, \H) \to \End\begin{pmatrix} C^{\infty}(M) \\ \oplus \\ C^{\infty}(Y) \end{pmatrix}$.
The $\ast$-representation is defined in analogy to the $\ast$-representation for the calculus of pseudodifferential operators on Lie manifolds as constructed in \cite{ALN}. 
It is defined as the restriction of the homomorphism $\varrho$ defined \eqref{repr}. Recall that it is characterized by the \emph{defining property} 
\[
\left(\varrho_{\Phi}(A) \circ \begin{pmatrix} r \\ r_{\partial} \end{pmatrix} \right) \begin{pmatrix} f \\ g \end{pmatrix} = A\begin{pmatrix} f \circ r \\ g \circ r_{\partial} \end{pmatrix}, \ f \in C^{\infty}(M), \ g \in C^{\infty}(Y), \ A \in \Psi(\G, \H) 
\]

where $r \colon \G \to M, \ r_{\partial} \colon \H \to Y$ denote the corresponding range maps (local diffeomorphisms). 
Set $\Phi_{NC}(j) := \Psi_{NC}(\lift(j))$, where $\lift$ denotes the natural transformation associating to $j$ the generalized morphism of Lie groupoids $\H \dashrightarrow \G$. 
By the Sobolev continuity of the operator calculus we can define the $L^2$-completion $\Phi_{NC}(j) \hookrightarrow \overline{\Phi}(j) \subset \L\begin{pmatrix} L^2(\G) \\ \oplus \\ L^2(\H) \end{pmatrix}$.
We show next that we obtain a non-commutative completion functor $\overline{\Phi}$ from $\Emb_{\V}$ into the category of $\ast$-algebras.  

\begin{Thm}
The functor $\overline{\Phi}$ furnishes a quantized non-commutative completion. 
\label{Thm:nccompl}
\end{Thm}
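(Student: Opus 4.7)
The plan is to verify in turn the three axioms of Definition \ref{Def:nccompl} after first checking that $\overline{\Phi}$ is indeed a covariant functor $\EmbV \to C_b^{\ast}$. On objects, $\overline{\Phi}$ sends an embedding $j\colon (Y,\W)\hookrightarrow (M,\V)$ to the $L^2$-closure of the $\ast$-algebra $\Psi_{NC}(\lift(j))$ built from the generalized morphism of Lie groupoids $\H\dashrightarrow\G$ produced by Lemma \ref{Lem:lift}; this is a $C^{\ast}$-algebra by Theorem \ref{Thm:algebra} and Lemma \ref{Lem:L2}. On morphisms, I would send a compatible pair of embeddings $j_1, j_2$ to the bimodule correspondence induced by the composition of the associated groupoid bibundles under the generalized tensor product $\circledast$, checking compatibility with composition by appealing to the functoriality asserted in Theorem \ref{Thm:functoriality}.

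For axiom \emph{i)}, the $L^2$-boundedness of every entry of a matrix in $\Psi_{NC}(\G,\H)$ is precisely the content of Lemma \ref{Lem:L2} combined with the standard $L^2$-boundedness of $\G$-pseudodifferential operators of order $0$ from \cite{ALN}. Thus the restriction of $\varrho$ extends to a $\ast$-homomorphism into the bounded operators on $L_{\V}^2(M)\oplus L_{\W}^2(Y)$, and its image is $\overline{\Phi}_{\V}(j)$ by construction. Naturality in $j$ follows from the naturality of $\lift$ and of the representations $\varrho_{|\G}, \varrho_{|\H}$.

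For axiom \emph{ii)}, I take $\lift_{qu}:=\lift$ as in Lemma \ref{Lem:lift} and $\lift_{cl}:=\A^{\ast}\circ\lift$. Commutativity of the triangle is then tautological. Morita equivalence preservation for $\lift_{qu}$ is exactly Lemma \ref{Lem:lift}, which shows that admissible embeddings produce Morita-equivalent integrating groupoids. Morita equivalence preservation for $\A^{\ast}$ is the statement of Theorem \ref{Thm:functoriality}\emph{i)}, and hence the composition $\lift_{cl}$ preserves Morita equivalence as well. For axiom \emph{iii)}, surjectivity of $\varrho\colon \Phi_{NC}(j)\to\Phi_{\V}(j)$ on each matrix entry reduces to the Ammann--Lauter--Nistor representation theorem \cite{ALN} in the pseudodifferential corner, and to the analogous surjectivity for boundary, co-boundary, and singular Green blocks, where it is built directly into the Kohn--Nirenberg quantization of the symbols on $\Lambda_b,\Lambda_c,\Lambda_g$ used in Definition \ref{Def:operators}. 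Commutativity of the square in \emph{iii)} then follows from the defining property \eqref{repr} of $\varrho$ together with the compatibility of $\lift$ with the range maps.

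The main obstacle will be the precise verification of functoriality on morphisms, namely showing that the generalized tensor product $\circledast$ in $\LG_b$ is intertwined with the Rieffel tensor product in $C_b^{\ast}$ at the level of the \emph{non-closed} algebras $\Psi_{NC}$, so that after $L^2$-completion the composition of bibundles corresponds to the composition of bimodule correspondences. This will hinge on the identification $\Psi_{NC}(\G,\H)\cong\C(\Zop)\circ\B(Z)+\PsiZ$ established in the proof of Theorem \ref{Thm:algebra} and the clean convolvability of the Lagrangians $\Lambda_b,\Lambda_c,\Lambda_g$ from Lemma \ref{Grels}; the content of the Muhly--Renault--Williams theorem (Theorem \ref{Thm:functoriality}\emph{ii)}) then promotes this to the $C^{\ast}$-level. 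Once this compatibility is secured, the three axioms above collectively show that $\overline{\Phi}$ is a quantized non-commutative completion.
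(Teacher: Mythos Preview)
Your proposal has a genuine gap at the categorical level and, as a consequence, misses the heart of the paper's argument.

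First, the category structure: in $\EmbV$ the \emph{objects} are Lie manifolds and the \emph{morphisms} are admissible embeddings $j\colon Y\hookrightarrow M$ (Definition \ref{Def:Liemf2}). So when you write ``on objects, $\overline{\Phi}$ sends an embedding $j$\ldots'' and then talk about ``a compatible pair of embeddings $j_1,j_2$'' as morphisms, you have the roles inverted. The functor $\overline{\Phi}$ must send a single embedding $j$ (a morphism in $\EmbV$) to a generalized morphism in $C_b^{\ast}$, i.e.\ to (the isomorphism class of) a bimodule correspondence between two $C^{\ast}$-algebras.

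This brings up the real issue: you never verify that the $L^2$-completed matrix algebra $\begin{pmatrix}\U&\C\\\B&\Psi_{\partial}\end{pmatrix}$ actually \emph{is} such a bimodule correspondence. Establishing this is the core of the paper's proof. One has to exhibit a $\Psi_{\partial}$-valued inner product on $\B$, namely $_{\Psi_{\partial}}\langle B_1,B_2\rangle = B_1\circ B_2^{\ast}$ (which lands in $\Psi_{\partial}$ by the composition rule $\Lambda_b\circ\Lambda_c=\Lambda_{\partial}$), and then a $\ast$-homomorphism $\varphi\colon\U\to\L_{\Psi_{\partial}}(\B)$ given by $\varphi(G)B=BG^{\ast}$, and check by hand that $\varphi$ is adjointable and multiplicative with respect to this inner product. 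These verifications use the composition rules from Theorem \ref{Thm:algebra} in an essential way; they are not a formal consequence of Muhly--Renault--Williams or of the clean convolvability of Lagrangians. Your ``main obstacle'' paragraph gestures toward compatibility of $\circledast$ with the Rieffel tensor product, but that concerns \emph{composition} of bimodules, which is a second-order question; the first-order question---why $\B$ is a correspondence bimodule at all---is left unaddressed.

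Your treatment of axioms \emph{i)} and \emph{ii)} is fine and matches the paper. For \emph{iii)}, the paper does more than invoke surjectivity of the Ammann--Lauter--Nistor representation: it writes the commuting square explicitly as an identity of Rieffel tensor products $\Phi_{\V}(j)\,\hat{\otimes}_{\Psi_{\W}}\,\widehat{\varrho_{\partial}} \cong \widehat{\varrho}\,\hat{\otimes}_{\U(\G)}\,\Phi(j)$, using Proposition \ref{Prop:inclCstar} to turn the strict $\ast$-homomorphisms $\varrho,\varrho_{\partial}$ into bimodules. You would need to supply this computation as well.
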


\begin{proof}
We have to study the functor $\overline{\Phi} \colon \Emb_{\V} \to C_b^{\ast}$. 
The interesting point is the map on morphisms. Hence we have to show that for any given admissible embedding
$Y \hookrightarrow M$ the algebra $\overline{\Phi}(j)$ implements a generalized morphism of $C^{\ast}$-algebras.
Fix an admissible embedding $Y \hookrightarrow M$, i.e. a morphism in $\EmbV$. 

Consider the $L^2$-completed order-$0$ algebra $\overline{\Phi}(j)$ which can be written in terms of the matrix $\begin{pmatrix} \U & \C \\ \B & \Psi_{\partial} \end{pmatrix}$.
Here $\U$ denotes the $L^2$-completion of the algebra of singular Green operators and $\Psi_{\partial}$ denotes the $L^2$-completion
of the algebra of pseudodifferential operators on $Y$ (the Lie calculus).

By the rules of composition for $\Phi(j)$ we have $\U \cdot \C \subset \C$ and $\Psi_{\partial} \cdot \B \subset \B$.
We can check that $\C$ is a right-Hilbert $\U$-module and $\B$ is a right Hilbert $\Psi_{\partial}$-module.
Additionally, we need to show that there is a $\ast$-homomorphism $\varphi \colon \U \to \L_{\Psi_{\partial}}(\B)$ taking
values in the adjointable operators. There is a scalar product $_{\Psi_{\partial}}\scal{\cdot}{\cdot} \colon \B \times \B \to \Psi_{\partial}$ such that
$_{\Psi_{\partial}}\scal{\varphi}{\psi} = _{\Psi_{\partial}}\scal{\psi}{\varphi}$ and $_{\Psi_{\partial}}\scal{\varphi}{\varphi} \geq 0$. 
Additionally $\|\varphi\|^2 = \|_{\Psi_{\partial}}\scal{\varphi}{\varphi}\|$ defines a norm with regard to which $\B$ is complete.
In our case we define $_{\Psi_{\partial}}\scal{B_1}{B_2} = B_1 \circ B_2^{\ast} \in \Psi_{\partial}$. 
Also $\varphi \colon \U \to \L_{\Psi_{\partial}}(\B)$ is given by $\varphi(G) \colon \B \to \B$ where $G$ is a singular Green operator.
The latter is defined as $\varphi(G)(B) = (G \cdot B^{\ast})^{\ast} = B \cdot G^{\ast}$. 
Then check that $_{\Psi_{\partial}}\scal{B_1}{\varphi(G) B_2} = _{\Psi_{\partial}}\scal{\varphi(G)^{\ast} B_1}{B_2}$ which holds
since by definition
\begin{align*}
_{\Psi_{\partial}}\scal{B_1}{\varphi(G) B_2} &= B_1 (\varphi(G) B_2)^{\ast} = B_1 (B_2 G^{\ast})^{\ast} \\
&= B_1 G B_2^{\ast} = (\varphi(G)^{\ast} B_1) B_2^{\ast} \\
&= _{\Psi_{\partial}}\scal{\varphi(G)^{\ast} B_1}{B_2}. 
\end{align*}

Secondly, $\varphi$ is a homomorphism because $\varphi(G_1 G_2) B = B G_2^{\ast} G_1^{\ast} = \varphi(G_1) \varphi(G_2) B$.
Also we have that
\[
_{\Psi_{\partial}}\scal{\varphi(G)  B_1}{B_2} = _{\Psi_{\partial}}\scal{B_1}{\varphi(G^{\ast}) B_2}.
\]

Note that the left hand side equals $(B_1 G^{\ast}) B_2^{\ast}$ while the right hand side equals $B_1 (B_2 G)^{\ast} = B_1 G^{\ast} B_2^{\ast}$.
Hence $\varphi$ is a well-defined $\ast$-homomorphism.
This yields the desired generalized morphism in $C^{\ast}$ and we have shown that $\Phi$ is a noncommutative completion.
The condition \emph{i)} follows by Lemma \ref{Lem:L2}. 
Condition \emph{ii)} follows via the generalized morphism $T^{\ast} \H \dashrightarrow T^{\ast} \G$ obtained by functoriality and
the corresponding construction of the operator classes.
In order to check condition \emph{iii)} we fix the embedding functor \ $\widehat{}_b \ \colon C^{\ast} \hookrightarrow C_b^{\ast}$ from Proposition \ref{Prop:inclCstar}.
Let $j \in \Mor(\E_{\V})$ be the admissible embedding $j \colon Y \hookrightarrow M$ and denote by $\W := \{V_{|Y} : V \in \V, \ V_{|Y} \ \text{tangent to} \ Y\}$
the induced Lie structure of $Y$.  
Also fix the $\ast$-homomorphisms $\varrho \colon \U(\G) \to \U_{\V}(M, Y)$ and $\varrho_{\partial} \colon \Psi(\H) \to \Psi_{\W}(Y)$.
We then define the natural representation $\varrho_{\Phi} \colon \Phi \to \Phi_{\V}$ and check that it is surjective natural transformation.
The following diagram of generalized morphisms in $C^{\ast}$ commutes
\[
\xymatrix{
\Psi(\H) \ar@{-->}[d]_{\varrho_{\Phi|\H}} \ar@{-->}[r]^{\Phi(j)} & \U(\G) \ar@{-->}[d]_{\varrho_{\Phi|\G}} \\
\Psi_{\W}^{\ast}(Y) \ar@{-->}[r]^{\Phi_{\V}(j)} & \U_{\V}(M, Y).
}
\]

Where we define $\varrho_{\Phi|\G} := \widehat{}_b \ \circ \varrho$ and $\varrho_{\Phi|\H} := \widehat{}_b \ \circ \varrho_{\partial}$. 
In particular the surjective $\ast$-homomorphism $\varrho_{\partial} \colon \Psi(\H) \to \Psi_{\W}(Y)$ yields
a $\Psi_{\W}^{\ast}(Y) - \Psi(\H)$ bimodule.
Also the surjective $\ast$-homomorphism $\varrho \colon \U(\G) \to \U_{\V}(M)$ yields a $\U_{\V}(M) - \U(\G)$ bimodule.
By definition $\Phi_{\V}(j)$ is a $\U_{\V} - \Psi_{\W}$ bimodule and $\Phi(j)$ is a $\U - \Phi(\H)$ bimodule.
Then 
\begin{align*}
\Phi_{\V}(j) \circ \varrho_{\Phi|\H} &= \Phi_{\V}(j) \hat{\otimes}_{\Psi_{\W}} \widehat{\varrho_{\partial}}, \\
\varrho_{\Phi|\G} \circ \Phi(j) &= \widehat{\varrho} \hat{\otimes}_{\U(\G)} \Phi(j). 
\end{align*}

Here $\hat{\otimes}$ denotes the Rieffel tensor product, cf. section \ref{Cstarb} and \cite{L}. 
The surjectivity of $\varrho_{\Phi}$ follows from the surjectivity of the strict morphisms $\varrho_{\partial}, \ \varrho$. 
\end{proof}

\begin{Cor}
Let $\Phi \colon \EmbV \to C_b^{\ast}$ be a quantized non-commutative completion. Then for each embedding $j \in \Mor(\EmbV)$
the generalized morphism $\Phi(j) \colon \U \dashrightarrow \Psi_{\partial}$ induces a hereditary 
$\ast$-homomorphism $\Psi_{\partial} \to \U$.
\label{Cor:nccompl}
\end{Cor}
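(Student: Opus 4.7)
The plan is to exhibit the hereditary $\ast$-homomorphism concretely by conjugation with (a lift of) the canonical generating pair $(j_*, j^*)$ of Proposition \ref{Prop:pair} into the bimodule implementing $\Phi(j)$. By axiom \emph{iii)} of Definition \ref{Def:nccompl}, the surjective natural transformation $\varrho_{\Phi,j} \colon \Phi_{NC}(j) \to \Phi_{\V}(j)$ onto the comparison algebra allows me to lift the matrix generators of Definition \ref{Def:comparison} to abstract boundary and co-boundary elements $j^* \in \B$, $j_* \in \C$ of $\Phi(j)$ whose image under $\varrho$ satisfies the generating pair relation $j^* j_* = \id$.

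I would then define $\varphi \colon \Psi_{\partial} \to \U$ by $\varphi(S) := j_* S j^*$. The composability $\C \cdot \Psi_{\partial} \cdot \B \subset \C \cdot \B \subset \U$ from Theorem \ref{Thm:algebra} and Table \ref{table:Symbols} ensures $\varphi$ takes values in $\U$. Multiplicativity hinges on the generating pair identity:
\[
\varphi(S_1)\varphi(S_2) = j_* S_1 (j^* j_*) S_2 j^* = j_* S_1 S_2 j^* = \varphi(S_1 S_2),
\]
and $\ast$-compatibility follows from $j_* = (j^*)^*$.

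For hereditariness, i.e., $\varphi(\Psi_{\partial}) \U \varphi(\Psi_{\partial}) \subset \varphi(\Psi_{\partial})$, I verify that for any $S_1, S_2 \in \Psi_{\partial}$ and $U \in \U$,
\[
\varphi(S_1) U \varphi(S_2) = j_* \bigl( S_1 (j^* U j_*) S_2 \bigr) j^* = \varphi\bigl( S_1 (j^* U j_*) S_2 \bigr),
\]
the inner sandwich $j^* U j_*$ lying in $\Psi_{\partial}$ by the composition rule $\B \cdot \U \cdot \C \subset \Psi_{\partial}$ from Theorem \ref{Thm:algebra}. This gives the desired absorption property $\varphi(\Psi_{\partial}) \U \varphi(\Psi_{\partial}) \subset \varphi(\Psi_{\partial})$, so the image is a hereditary $C^{\ast}$-subalgebra of $\U$.

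The principal obstacle is extracting (a lift of) the generating pair from an abstract quantized noncommutative completion: Definition \ref{Def:nccompl} does not name these elements as part of the data, so I would rely on the surjectivity of $\varrho_{\Phi,j}$ onto the comparison algebra (which by Definition \ref{Def:comparison} is generated precisely by such a pair) to produce a canonical lift. A secondary subtlety is that the relation $j^* j_* = \id$ might hold only modulo $\ker \varrho_{\Phi,j}$, so the argument may require absorbing lower-order terms via an order-reduction of the kind employed in the proof of Proposition \ref{Prop:pair}, after which the $L^2$-completion in the bimodule picture makes the identity exact.
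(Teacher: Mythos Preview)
Your proposal is essentially correct and follows the same approach as the paper: both define the $\ast$-homomorphism by $S \mapsto j_\ast S j^\ast$ using a generating pair and exploit the identity $j^\ast j_\ast = \id$ together with the composition rules to show the image is hereditary. The only real difference is in the \emph{characterization} of hereditariness used: you verify the algebraic condition $\varphi(\Psi_\partial)\,\U\,\varphi(\Psi_\partial) \subset \varphi(\Psi_\partial)$ directly, whereas the paper argues via the order-theoretic characterization, namely that $0 \leq U \leq j_\ast S j^\ast$ forces $U = j_\ast j^\ast U j_\ast j^\ast = j_\ast(j^\ast U j_\ast)j^\ast$ (using that $j_\ast j^\ast$ is a projection dominating $U$), and then observes that $j^\ast U j_\ast \in \Psi_\partial$. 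These two formulations are equivalent for $C^\ast$-subalgebras, so the arguments are interchangeable; your version has the minor advantage of not invoking the positivity/projection argument.

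Your caveat about lifting the generating pair into an abstract $\Phi(j)$ is a fair observation, but note that the paper's proof simply \emph{assumes} a generating pair is available (citing \cite[Lem.~3]{AMMS}) and does not carry out any such lifting or order-reduction argument either; so you are not missing anything the paper supplies.
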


This follows by \cite[Lem. 3]{AMMS}. We provide details for completeness.

\begin{proof}
Let $j \in \Mor(\EmbV)$ be given. Then write $\Phi(j)$ in terms of the matrix of $C^{\ast}$-algebras 
$\begin{pmatrix} \U & \C \\ \B & \Psi_{\partial} \end{pmatrix}$. Let $(j_{\ast}, j^{\ast})$ be a generating pair.
Then if $0 \leq U \leq j_{\ast} S j^{\ast}$ for $S \in \Psi_{\partial}$ we have $U = j_{\ast} j^{\ast} U j_{\ast} j^{\ast}
= j_{\ast} (j^{\ast} U j_{\ast}) j^{\ast}$. Since $\Phi(j)$ is a bimodule correspondence it follows that 
the term inside the brackets is contained in $\Psi_{\partial}$. Hence $U$ has the form $j_{\ast} \tilde{S} j^{\ast}$ for some
$\tilde{S} \in \Psi_{\partial}$. This completes the proof that the $\ast$-homomorphism $\Psi_{\W} \to \U$ given by
$S \mapsto j_{\ast} S j^{\ast}$ is a hereditary $\ast$-homomorphism.
\end{proof}

\subsection*{Universality}

In order to address the universality property of non-commutative completions we now restrict attention to the subcategory $\Emb^H \subset \Emb_{\V}$ of those embeddings $j \colon Y \hookrightarrow M$ which
have the additional property that there are corresponding Lie groupoids, integrating the Lie structures of $Y$ and $M$ respectively, which are Hausdorff. Denote by $\LG_b^H$ the category consisting of Hausdorff Lie groupoids
as objects and generalized morphisms as arrows. Denote by $\LA_b^H$ the category consisting of Lie algebroids associated to Hausdorff Lie groupoids and generalized morphisms as arrows.  
\begin{Def}
A covariant functor $\Phi \colon \Emb^{H} \to \C_b^{\ast}$ is called \emph{$H$-quantized non-commutative completion} if it is the minimal covariant functor such that the conditions \emph{i)} and \emph{iii)} of Definition \ref{Def:nccompl} hold and $H-$\emph{ii)} if in addition
there are natural transformations $\lift_{qu} \colon \Emb^H \to \LG_b^H$ and $\lift_{cl} \colon \Emb^H \to \LA_b^H$ such that the functorial diagram
\begin{figure}[H]
\begin{tikzcd}
\Emb^H \drar[bend right, "\lift_{qu}", swap]\rar[bend left, "\lift_{cl}"] 
 & \LA_b^H  \\
 & \LG_b^H \uar["\A^{\ast}"]
\end{tikzcd}
\end{figure}
commutes and preserves Morita equivalences.
\label{Def:Hquantized}
\end{Def}

\begin{Rem}
By minimality in the above definition we mean that given $H$-quantized non-commutative completion $\Phi \colon \Emb^{H} \to C_b^{\ast}$,
if $\widetilde{\Phi} \colon \Emb^{H} \to C_b^{\ast}$ is another covariant functor such that properties \emph{i)}, $H$-\emph{ii)} and \emph{iii)} hold, then 
there is an essentially surjective natural transformation $F \colon \widetilde{\Phi} \to \Phi$. 
\label{Rem:Hquantized}
\end{Rem}

\begin{Thm}[Universality]
Let $\Phi \colon \Emb^H \to \C_b^{\ast}$, $\widetilde{\Phi} \colon \Emb^H \to C_b^{\ast}$ be $H$-quantized non-commutative completions. Then there is a natural isomorphism $\theta \colon \Phi \to \widetilde{\Phi}$. 
\label{Thm:univ}
\end{Thm}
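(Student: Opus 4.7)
The plan is to invoke the minimality clause of Definition \ref{Def:Hquantized} symmetrically in $\Phi$ and $\widetilde{\Phi}$. Since $\widetilde{\Phi}$ satisfies i), H-ii) and iii), the minimality of $\Phi$ produces an essentially surjective natural transformation $\theta \colon \widetilde{\Phi} \to \Phi$; reversing the roles, the minimality of $\widetilde{\Phi}$ applied to $\Phi$ supplies an essentially surjective natural transformation $\widetilde{\theta} \colon \Phi \to \widetilde{\Phi}$. It then remains to show that the compositions $\theta \circ \widetilde{\theta}$ and $\widetilde{\theta} \circ \theta$ are naturally isomorphic to the respective identities, which reduces the universality to a rigidity statement for self-transformations.

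Concretely, I would establish the following uniqueness lemma: any essentially surjective natural endotransformation $\sigma \colon \Phi \to \Phi$ that commutes with the surjective representation $\varrho \colon \Phi \to \Phi_{\V}$ of axiom iii) is necessarily the identity. The argument proceeds object by object: for a fixed $j \in \Mor(\Emb^H)$, axiom H-ii) together with Theorem \ref{Thm:functoriality} identifies both $\Phi(j)$ and $\widetilde{\Phi}(j)$ with the generalized morphism $C^{\ast}(\H) \dashrightarrow C^{\ast}(\G)$ attached to the Lie groupoid correspondence $\lift_{qu}(j)$, canonically up to Morita equivalence (i.e.\ up to isomorphism in $C_b^{\ast}$, by Proposition \ref{Prop:MoritaCstar}). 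On this level, $\sigma_j$ is forced to cover the identity on $\Phi_{\V}(j)$, and since $\varrho_\Phi$ is surjective with kernel controlled by the common Lie algebroid data furnished by $\widetilde{\lift}$, combined with essential surjectivity, $\sigma_j$ must agree with $\mathrm{id}_{\Phi(j)}$.

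Applying this uniqueness lemma to the endotransformations $\theta \circ \widetilde{\theta}$ and $\widetilde{\theta} \circ \theta$, which by naturality commute with $\varrho_{\Phi}$ and $\varrho_{\widetilde{\Phi}}$ respectively, yields $\theta \circ \widetilde{\theta} = \mathrm{id}_{\Phi}$ and $\widetilde{\theta} \circ \theta = \mathrm{id}_{\widetilde{\Phi}}$, showing that $\theta$ is the desired natural isomorphism. The main obstacle will be the formalisation of the uniqueness lemma at the level of \emph{morphisms} of $\Emb^H$: while the values on objects are pinned down by Muhly-Renault-Williams via the commuting diagram of H-ii), the compatibility with the generalised tensor product $\circledast$ and with the surjection onto $\Phi_{\V}$ must be leveraged carefully to ensure that $\sigma$ is uniquely determined on every bimodule correspondence $\Phi(j)$, and not merely on the underlying $C^{\ast}$-algebras $C^{\ast}(\H)$ and $C^{\ast}(\G)$. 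This is where the hereditary $\ast$-homomorphism of Corollary \ref{Cor:nccompl}, controlling bimodules via the generating pair $(j_{\ast}, j^{\ast})$, should do the decisive work.
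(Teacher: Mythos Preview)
Your symmetric-minimality strategy is a reasonable first instinct for a universality statement, but the argument has a genuine gap at the uniqueness lemma, and this is also where your approach diverges from the paper's.

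The claim that an essentially surjective endotransformation $\sigma \colon \Phi \to \Phi$ commuting with the surjection $\varrho_{\Phi} \colon \Phi \to \Phi_{\V}$ must be the identity does not follow from the ingredients you list. A surjection with nontrivial kernel admits, in general, many automorphisms of the source covering the identity on the target; your phrase ``kernel controlled by the common Lie algebroid data'' is not made precise and does not by itself pin $\sigma$ down. Appealing to Corollary \ref{Cor:nccompl} does not help here either: the hereditary $\ast$-homomorphism $\Psi_{\partial} \to \U$ constrains the internal module structure of a single $\Phi(j)$, but says nothing about why a self-transformation of the functor must be trivial. Likewise, the identification of $\Phi(j)$ with the generalized morphism $C^{\ast}(\H) \dashrightarrow C^{\ast}(\G)$ via Theorem \ref{Thm:functoriality} is only up to isomorphism in $C_b^{\ast}$, which leaves exactly the ambiguity you are trying to eliminate.

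The paper's proof avoids this problem by not attempting a direct comparison of $\Phi$ and $\widetilde{\Phi}$. Instead it routes everything through the concrete model $\Phi_{NC} = \Psi_{NC} \circ \lift$ and exploits the one place where the Hausdorff hypothesis actually bites: by \cite{N}, for Hausdorff $\H$ and $\G$ the representation homomorphisms $\varrho_{\partial} \colon \Psi(\H) \to \Psi_{\W}(Y)$ and $\varrho \colon \U(\G) \to \U_{\V}(M,Y)$ are \emph{isomorphisms}, not merely surjections. This lets one define $\theta = (\varrho_{\Phi_{NC}})^{-1} \circ \varrho_{\Phi}$ explicitly as a composite of invertible arrows in $C_b^{\ast}$, after which a single invocation of minimality produces a one-sided inverse $F \colon \Phi_{NC} \to \Phi$ and the argument closes. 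In short: the missing idea in your proof is precisely this invertibility of $\varrho$ on the concrete model in the Hausdorff case; once you have it, no separate rigidity lemma for endotransformations is needed.
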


\begin{proof}
Given an $H$-quantized non-commutative completion $\Phi$, let us compare $\Phi$ with $\Phi_{NC} := \Psi_{NC} \circ \lift$. 
By property \emph{iii)} and Theorem \ref{Thm:nccompl} there are surjective natural transformations $\varrho_{\Phi_{NC}} \colon \Phi_{NC} \to \Phi_{\V}$ and $\varrho_{\Phi} \colon \Phi \to \Phi_{\V}$, both compatible with the $\ast$-homomorphisms $\varrho$ and $\varrho_{\partial}$. 
We define next a natural transformation $\theta$ fitting into the commuting diagram 

\begin{tikzcd}
\Phi \ar{d}{\varrho_{\Phi}} \ar{r}{\theta} & \Phi_{NC} \ar{dl}{\varrho_{\Phi_{NC}}} \\
\Phi_{\V}
\end{tikzcd}

Let $j \in \Emb^H$ and using property $H-$\emph{ii)} consider the commuting diagram in $C_b^{\ast}$ with the associated $C^{\ast}$-algebras \\
\begin{tikzcd}[row sep=huge, column sep=huge, text height=1.5ex, text depth=0.25ex]
\C(\H) \ar[dashed]{d}{\theta_{|\H}} \ar[dashed]{r}{\Phi(j)} & \C(\G) \ar[dashed]{d}{\theta_{|\G}} \\
\Psi(\H) \ar[dashed]{r}{\Phi_{NC}(j)} & \U(\G). 
\end{tikzcd}

In particular we can assume that both $\H$ and $\G$ are Hausdorff Lie groupoids. By \cite{N} the $\ast$-homomorphism $\varrho_{\partial} \colon \Psi(\H) \iso \Psi_{\W}(Y)$ is an isomorphism. 
The same proof can be applied to the representation homomorphism $\varrho$ restricted to $\U(\G)$. Again, by the Hausdorff property this homomorphism is bijective. In particular we obtain a $\ast$-isomorphism
$\varrho \colon \U(\G) \iso \U_{\V}(M, Y)$. Note that here condition \emph{i)} of a non-commutative completion enters in the proof. Using again Theorem \ref{Thm:nccompl} we obtain the commuting diagram in $C_b^{\ast}$ \\
\begin{tikzcd}[row sep=huge, column sep=huge, text height=1.5ex, text depth=0.25ex]
\C(\H) \ar[bend right=80, dashed]{dd}{\theta_{|\H}} \ar[dashed]{d}{\varrho_{\Phi_{|\H}}} \ar[dashed]{r}{\Phi(j)} & \C(\G) \ar[dashed]{d}{\varrho_{\Phi_{|\G}}} \ar[bend left=80, dashed]{dd}{\theta_{|\G}} \\
\Psi_{\W}(Y) \ar[dashed]{r}{\Phi_{\V}(j)} & \U_{\V}(M, Y) \\
\Psi(\H) \ar[dashed]{u}{\Phi_{{NC}_{\H}}} \ar[dashed]{r}{\Phi_{NC}(j)} & \U(\G) \ar[dashed]{u}{\varrho_{\Phi_{{NC}_{|\G}}}}
\end{tikzcd}

The $\ast$-isomorphisms $\varrho$, $\varrho_{\partial}$ furnish by Proposition \ref{Prop:inclCstar} the bimodule isomorphisms $\varrho_{\Phi_{{NC}_{|\H}}} = \widehat{}_b \circ \varrho_{\partial}$ and 
$\varrho_{\Phi_{{NC}_{|\G}}} = \widehat{}_b \circ \varrho$ (apply the Proposition to the homomorphism and its inverse separately to obtain the bimodule correspondence and the inverse bimodule correspondence). 
Hence $\theta_{|\H}$ and $\theta_{|\G}$ are defined via the previous diagram by inverting these isomorphisms and composition with $\varrho_{\Phi_{|\G}}$ and $\varrho_{\Phi_{|\H}}$.
Therefore $\theta$ is a well-defined, essentially surjective natural transformation. This shows in particular that $\Phi_{NC}$ is an $H$-quantized non-commutative completion.
By the minimality of $\Phi$ we can now fix an essentially surjective natural transformation $F \colon \Phi_{NC} \to \Phi$. 
Consider the following diagram in $C_b^{\ast}$: \\
\begin{tikzcd}[row sep=huge, column sep=huge, text height=1.5ex, text depth=0.25ex]
\Psi_{\W}(Y) \ar[dashed]{r}{\Phi_{\V}(j)} & \U_{\V}(M, Y) \\
\C(\H) \ar[dashed]{u}{\varrho_{\Phi_{|\H}}}  \ar[dashed]{r}{\Phi(j)} & \C(\G) \ar[dashed]{u}{\varrho_{\Phi_{|\G}}} \\
\Psi(\H) \ar[dashed]{u}{F_{|\H}} \ar[bend left=80, dashed]{uu}{\varrho_{\Phi_{{NC}_{|\H}}}} \ar[dashed]{r}{\Phi_{NC}(j)} & \U(\G) \ar[dashed]{u}{F_{|\G}} \ar[bend right=80, dashed]{uu}{\varrho_{\Phi_{{NC}_{|\G}}}} 
\end{tikzcd}

Since both $\varrho_{\Phi_{NC}}$ and $\varrho_{\Phi}$ are defined via the $\ast$-homomorphisms $\varrho$ and $\varrho_{\partial}$ restricted to their respective domains,
we obtain by naturality of $F$ that the left- and rightmost sub-diagrams commute in $C_b^{\ast}$, i.e. $\varrho_{\Phi_{{NC}_{|\G}}} = \varrho_{\Phi_{|\G}} \circ F_{|\G}$
and $\varrho_{\Phi_{{NC}_{|\H}}} = \varrho_{\Phi_{|\H}} \circ F_{|\H}$. Since by definition $\theta = (\varrho_{\Phi_{NC}})^{-1} \circ (\varrho_{\Phi})$ we obtain that
$F$ is a right inverse natural transformation to $\theta$. Hence $\theta$ is natural isomorphism of $\Phi$ with $\Phi_{NC}$. 
\end{proof}

\section{Concluding remarks}

\emph{1.} Starting from a category of embeddings of a particular class of manifolds, in our case compact manifolds with corners which arise as compactifications of certain complete Riemannian manifolds, we have
in this work described a functorial process (non-commutative completion) to associate operator algebras to such embeddings. Then we have shown that the axioms of a non-commutative completion yield that
this functorial process is universal, i.e. there is - in a precise categorical sense - only one way to associate an operator algebra, fulfilling certain minimal properties.
This suggests generalizations of our scheme to other categories of embeddings. An obvious question is whether we can extend our scheme to the entire category of compactifications of complete Riemannian manifolds with bounded geometry. 

\emph{2.} In the case where the base geometry has no singularity consider a compact manifold with boundary $(X, \partial X)$ and take $M$ to be the double of $X$ at $\partial X$.
Then the operator algebra associated to the embedding $\partial X \hookrightarrow M$ is $\ast$-isomorphic to the Boutet de Monvel algebra of operators, if these operators are extended (using the Seeley extension operator
in the normal direction) to the double $M$. We refer to \cite{G}, Section 2.4. for the proof in this special case. In the case with singularities we consider a Lie manifold $(X, \A_{+}, \V_{+})$ with boundary $(Y, \B, \W)$ and take 
$(M, \V, \A)$ as the double. Lie manifolds with boundary and the doubling construction are described in \cite{AIN}. In this case the embedding of Lie manifolds $j \colon Y \hookrightarrow M$ is transverse by definition and without loss of generality assumed to be admissible. The associated non-commutative completion $\Phi_{NC}(j)$ is $\ast$-isomorphic to the Boutet de Monvel algebra (for order $m \leq 0$), extended to the double Lie manifold $(M, \A, \V)$, as shown in \cite{B2}. As a consequence it follows that Boutet de Monvel's algebra is universal for manifolds with singularities.  

\appendix

\section{Coordinate-free characterization of symbol spaces}
\label{sec:symbinv}

In this appendix, we give a coordinate-free description of the symbol spaces used in \cite{NS}. The symbol spaces are actually shown to correspond to spaces of smooth functions multiplied by certain weights. The construction is outlined in Melrose and Rochon \cite{MR} in the case of manifolds with fibred boundaries. We apply this general machinery in some detail to the symbol spaces on $\Lambda_\partial$, $\Lambda_\Psi$, $\Lambda_b$, $\Lambda_c$ and $\Lambda_g$.
\subsection{Radial compactification of $\RRd$ and classical symbols}
As a model example, we want to realize the space $S^m_\cl(\RRd)$ of symbols in one variable as a space of smooth functions.\\
We recall that the radial compactification of $\RRd$ may be constructed as follows: topologically, we identify $\overline{\RRd}$ with $\BBd=\{\eta\in\RRd\,|\,|\eta|\leq 1\}$, the unit disk. We write $\iota:\RRd\hookrightarrow (\BBd)^o$ for an isomorphism that is given, near the boundary, by radial inversion $\xi\mapsto \frac{\xi}{|\xi|}$. \\
We now equip $\BBd$ with a compatible smooth structure. In any open neighbourhood of the origin, we equip $(\BBd)^o$ with the $\Sm$-structure it inherits from $\BBd\subset \RRd$. Near the boundary $|\eta|=1$ we say a function $f$ is smooth if $f(\frac{\eta}{|\eta|^2})$ is smooth in polar coordinates, that is we choose $\rho=1-|\eta|$ and the angular variables $\varphi_j\in \SSSd$ as smooth coordinates. The function $\rho$ then also serves as a boundary defining function if smoothly continued to a positive function into the interior.\\
The previous procedure thus serves precisely to make the vector fields $r\partial_r$ and $\partial_{\varphi_j}$ into smooth vector fields on all of $\RRd$ ``up to infinity'' in the sense that their image under radial compactification, $-\rho\partial_\rho$ and $\partial_{\varphi_j}$ are smooth vector fields near the boundary $\partial\BBd$. Then we have the following equivalence:
\begin{Lem}
The space of symbols $S_\cl(\RRd)=\bigcup_m S_\cl^m(\RRd)$ equipped with its usual LF-topology is, as a filtered differential algebra, isomorphic to $\bigcup_{m\in\RR} \rho^{-m} \Sm(\BBd)$ with the equivalence given by pullback with the radial compactification map $\iota$.
\end{Lem}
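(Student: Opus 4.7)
The plan is to establish the isomorphism by chasing the polyhomogeneous expansion of classical symbols through the radial inversion in both directions, then checking algebraic and topological compatibility.

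First I would verify the forward inclusion $\iota^*(\rho^{-m}\Sm(\BBd)) \subset S^m_\cl(\RRd)$. Given $f \in \Sm(\BBd)$, one Taylor-expands $f$ at the boundary $\partial\BBd \cong \SSSd$: for any $N$, $f = \sum_{k=0}^{N-1} \rho^k f_k + \rho^N g_N$ with $f_k \in \Sm(\SSSd)$ and $g_N \in \Sm(\BBd)$. Since $\iota^* \rho \sim \jap{\xi}^{-1}$ at infinity, the pullback of $\rho^{k-m} f_k$ is homogeneous of degree $m-k$ there, and the remainder is bounded by $C\jap{\xi}^{m-N}$, giving a polyhomogeneous expansion. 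The required derivative estimates follow by noting that the $b$-vector fields $-\rho\partial_\rho$ and $\partial_{\varphi_j}$ on $\BBd$ correspond under $\iota$ to $\xi\cdot\nabla_\xi$ and spherical derivatives on $\RRd$, both of which are order-preserving on symbol spaces.

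Conversely, for $a \in S^m_\cl(\RRd)$ with expansion $a \sim \sum_j a_{m-j}$, set $\tilde{a} := (\iota^{-1})^*(\jap{\xi}^{-m} a)$ on the interior $(\BBd)^o$; I would show $\tilde{a}$ extends smoothly across $\partial\BBd$. Each homogeneous summand $(\iota^{-1})^*(\jap{\xi}^{-m}a_{m-j})$ agrees near the boundary with a smooth multiple of $\rho^j \cdot a_{m-j}|_{\SSSd}$, hence is smooth. The symbol estimates on the remainder $a - \sum_{j<N} a_{m-j} \in S^{m-N}$ translate under the polar/inverted coordinates into uniform bounds $|\partial_\rho^\alpha\partial_\varphi^\beta \tilde{r}_N| \leq C_{\alpha,\beta,N}\,\rho^{N-|\alpha|}$ for every multi-indices $\alpha,\beta$ and every $N$, so a Borel summation identifies $\tilde{a}$ with a genuine element of $\Sm(\BBd)$ carrying the prescribed Taylor expansion at the boundary.

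For the algebraic and topological structure: $\iota^*$ is manifestly a ring isomorphism on the interior, the weight rule $\rho^{-m}\Sm \cdot \rho^{-m'}\Sm \subseteq \rho^{-(m+m')}\Sm$ matches composition of symbol orders, and the filtration $\rho^{-m}\Sm \hookrightarrow \rho^{-m'}\Sm$ for $m\leq m'$ matches $S^m \hookrightarrow S^{m'}$. Differentiation $\partial_{\xi_j}$ corresponds under $\iota$ to a smooth combination of $\rho^2\partial_\rho$ and $\rho\partial_{\varphi_j}$, hence maps $\rho^{-m}\Sm$ to $\rho^{-(m-1)}\Sm$, compatibly with $\partial_{\xi_j}\colon S^m \to S^{m-1}$. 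Finally, the Fréchet seminorms $\sup\jap{\xi}^{|\beta|-m}|\partial^\beta a|$ on $S^m_\cl$ are equivalent to the $C^k$-seminorms of $\rho^m \tilde{a}$ on $\BBd$, and taking direct limits in $m$ yields the claimed LF-isomorphism.

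The main obstacle is the second step: one must upgrade the symbolic remainder estimates to genuine smoothness up to $\partial\BBd$. This relies essentially on the \emph{classical} (polyhomogeneous) hypothesis, since only the asymptotic expansion provides the successive improvement of remainders of all orders required for $\tilde{a}$ to admit a smooth Taylor expansion in $\rho$; a merely polyhomogeneous-free symbol of order $m$ would only give conormal regularity at the boundary rather than $C^\infty$-smoothness.
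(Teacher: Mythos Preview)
Your proposal is correct and follows essentially the same approach as the paper: both identify the Taylor expansion in $\rho$ at $\partial\BBd$ with the polyhomogeneous expansion in $|\xi|$, and both derive the symbolic estimates from the fact that the $b$-vector fields $\rho\partial_\rho$, $\partial_{\varphi_j}$ correspond under $\iota$ to order-preserving operations on symbols (the paper rewrites $\partial_{\xi_i}$ in polar coordinates exactly as you do). The paper's argument is a terse sketch that spells out only the direction $\rho^{-m}\Sm(\BBd)\to S^m_\cl(\RRd)$ and then simply asserts that the classical expansion \emph{is} the Taylor expansion in $\rho$; your treatment of the reverse inclusion via remainder estimates and Borel summation is more detailed than what the paper provides, but not a different method.
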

\begin{proof}
We briefly recall the argument behind the equivalence: One first observes that the weight $\rho^{-m}$ is under radial inversion equivalent to $(1+|\xi|)^m$ and thus the image of smooth functions up to the boundary under $(\iota^{-1})^*$ are in particular bounded.\\
One then notices that the vector fields $\rho\partial_\rho$ and $\partial_{\varphi_j}$ preserve  $\rho^{-m} \Sm(\BBd)$. However, this implies for $f\in \rho^{-m} \Sm(\BBd)$ the estimate  
$$ (\frac{1}{\rho}\partial_{\varphi_j})^\beta \partial_\rho^\alpha f(\rho,\varphi)\in \rho^{-m-|\alpha|-|\beta|} \Sm(\BBd).$$
The coordinate vector fields $\partial_{\xi_i}$ in the standard coordinates on $\RRd$ may be rewritten in polar coordinates as $\partial_{\xi_i}=\frac{1}{r}r\partial_r+\sum c_{ij}\frac{1}{r}\partial_{\varphi_{j}}$. 
This implies the symbolic estimates
$$|\partial_{\xi}^{\alpha} f(\iota^{-1}(y))|\lesssim (1+|\xi|)^{m-|\alpha|}.$$
Finally, the classical expansion of a symbol $a=(\iota^{-1})^*f$ into homogeneous functions in $|\xi|$ is nothing but a Taylor-expansion in $\rho$ at $\rho=0$, since $\rho=|xi|^{-1}$ for $|\xi|$ suitably large.\\
Thus we may characterize $S^m_\cl(\RRd)$ as a space of smooth functions. By nuclearity we have for open sets $U$ that
$\Smc(U)\otimes S^m_\cl(\RRd)$
is isomorphic to the space of compactly supported classical symbols in $U$ and consequently $S^m_{\cl,c}(U\times\RRd)$ is isomorphic to $\bigcup_{m\in\RR} \rho^{-m} \Smc(\RRd\times\BBd))$. 
\end{proof}

Using a local trivialization by a partition of unity, we can thus characterize the classical space on a given vector bundle.
The previous construction thus yields a coordinate-free description of $S_\cl(\Lambda_{\partial})$ and $S_\cl(\Lambda_{\Psi})$, since these are just classical symbols on vector bundles.

\subsubsection{Blow-up of submanifolds}
We now want to mimic the preceding construction to obtain the symbol spaces of Section \ref{section:SymbolCalc}. For that, we note that $S^m_\cl(\Lambda_X)$ and $S^m_\cl(\Lambda_\partial)$ are simply given as the classical symbols on the respective bundles. In order to realize the remaining symbol spaces as above, we have to suitably compactify the fibres of $\Lambda_b$, $\Lambda_c$ and $\Lambda_g$. This is achieved by a blow-up construction, see \cite{Mel} and \cite[Chapter 18.3]{HIII}.\\
We recall some properties of the blow-up construction. Let $X$ a $\Sm$-manifold and $Y$ a $\Sm$ submanifold. Then the blow-up of $X$ at $Y$ is $[X,Y]=(X\setminus Y)\cup (T_YX\setminus TY)/(TY)$ as a set which is then equipped with a particular differential structure. The essential feature of this differential structure is that it coincides with the structure on $X$ outside of $Y$ and is a coordinate-free way to introduce polar coordinates ``around $Y$''. Furthermore, there is a natural map from $[X,Y]$ to $X$, the blow-down map $\beta$, which reverses the changes into singular coordinates.\\
In coordinates, the situation is as follows: assume $Y$ is locally given by $(x_1,\dots,x_k)=0$ where $X=(x_1,\dots,x_d)$. Then $[X,Y]$ can be modelled as $[0,\infty)\times\SSS^{k-1}\times \RR^{n-k}\ni (\rho,\phi_1\dots\phi_k,x_{k+1},\dots,x_{d})$. In a neighbourhood of a point where $\phi_j\neq 0$, a function on $[X,Y]$ is then called smooth if it is so in $\rho \phi_j$, $\phi_i/\phi_j$ ($j\neq i$), and in $(x_{k+1},\dots,x_d)$.\\ 
This essentially means that the vector fields given on $X\setminus Y$ which diverge as $1/\rho$ are turned into smooth vector fields on the blow-up.
\subsubsection{Coordinate-free description of $S(\Lambda_b)$ and $S(\Lambda_c)$}
We now perform the construction in the case of $\Lambda_b$, see \cite{MR}. In local coordinates in some neighbourhood $U$, $\Lambda_b$ is given by
\begin{align*}
\Lambda_b& =\{(x,p,x^\prime,t^\prime,p^\prime,\tau^\prime\,|\,x=x^\prime,\ t^\prime=0,\ p=p^\prime\}
\end{align*}
This means that $\Lambda_b$ can be locally parametrized by $\{(x,p^\prime,\tau^\prime\}\in U\times\RR^{d_1}\times\RR^{d_2}$.
We pass to the joint radial compactification of $\RR^{d_1}\times\RR^{d_2}\cong \RR^{d_1+d_2}$, that is $\BB^{d_1+d_2}$, meaning to the joint fibre-wise radial compactification $\overline{\Lambda_b}$ of $\Lambda_b$. Therein, we blow-up the boundary $B$ of the submanifold where $p^\prime=0$, i.e. we blow up at ``$|\tau^\prime|=\infty$ and $p^\prime=0$''.
%
%
\begin{Rem}
Note that the submanifold with $p^\prime=0$ is the submanifold $T^*Y\times N^*_Y M$. 
\end{Rem}
On the resulting space $[\overline{\Lambda_b},B]$, we have two smooth boundary defining functions $\rho$ and $\rho_{ff}$. In the old coordinates, these may be chosen as follows:
\begin{itemize}
\item $\rho$ equals $|p^\prime|^{-1}$ for $|p^\prime|>c$ and is extended to a positive function in the interior
\item $\rho_{ff}$ equals $\rho^{-1}(|p^\prime|^2+|\tau|^2)^{-2}$ for $|p^\prime|+|\tau|>c$ and smoothly extended.
\end{itemize} 
This construction achieves that the image of the following vector fields, given in coordinates on $\Lambda$ as
$$p^\prime_j\partial_{p^\prime_k}\quad \tau^\prime_l\partial_{\tau^\prime_i} \quad p^\prime_j\partial_{\tau^\prime_i}$$
with arbitrary indices $j,k,i,l$ are smooth ``up to infinity'' on the blow-up space. Notice that the first two collections of vector fields, $p^\prime_j\partial_{p^\prime_k}$ and $\tau^\prime_l\partial_{\tau^\prime_i}$ were already smooth on $\overline{\Lambda_b}$, whereas $p^\prime_j\partial_{p^\prime_i}$ is so because it may be written as $\frac{p^\prime_j}{\tau_l}(\tau_l\partial_{\tau_i})$.\\
The fact that these vector fields leave $\rho^{m-k}\rho_{ff}^{-m} C^\infty([\overline{\Lambda_b},B])$ invariant then implies the following estimates 
$$|\partial_x^\alpha \partial_{p^\prime}^\beta \partial_{\tau^\prime}^\gamma a(x,p^\prime,\tau^\prime)|\lesssim (1+|p^\prime|)^{k-|\beta|}(1+|p^\prime|+|\tau^\prime|)^{l-|\gamma|}.$$
as well as polyhomogeneous expansions. This means we can set
$$S^{k,l}_{\cl}(\Lambda_b)=\beta^*\left(\rho^{l-k}\rho_{ff}^{-l} C^\infty([\overline{\Lambda_b},B]\right).$$
The space $S^{k,l}_{\cl}(\Lambda_c)$ may be constructed in the same manner, starting from $\Lambda_c$. 
\subsubsection{Coordinate-free description of $S(\Lambda_g)$}
In the case of $S^{m,k,l}_\cl\Lambda_g,$ we only sketch the construction. Locally, $S^{m,k,l}_\cl\Lambda_g$ is given as
\begin{align*}
\Lambda_g& =\{(x,t,p,\tau,x^\prime,t^\prime,p^\prime,\tau^\prime\,|\,x=x^\prime,\ t=t^\prime=0,\ p=p^\prime\},
\end{align*}
the situation is similar. Here, we want to obtain the symbolic estimates
$$|\partial_x^\alpha \partial_{p^\prime}^\beta \partial_{\tau^\prime}^\delta \partial_{\tau}^\gamma a(x,p^\prime,\tau,\tau^\prime)|\lesssim (1+|p^\prime|)^{k-|\beta|}(1+|p^\prime|+|\tau|)^{m-|\gamma|}(1+|p^\prime|+|\tau^\prime|)^{l-|\delta|}.$$
Here, however, we first note that we have to treat $\tau$ and $\tau^\prime$ independently. This means we separately compactify the fibres $K_{\tau}=\RR_{p^\prime}^{d}\times\RR^{d_1}_\tau$ as well as in $K_{\tau^\prime}=\RR_{p^\prime}^{d}\times\RR^{d_1}_{\tau^\prime}$. This means that outside of suitable neighbourhoods of the origin we pass to polar coordinates in
$$\frac{1}{\sqrt{|\tau^\prime|^2+|p^\prime|^2}},\quad \frac{1}{\sqrt{|\tau|^2+|p^\prime|^2}},\quad + \text{angular variables.}$$
We obtain a manifold with corners, $\overline{\Lambda_g}$. In it, we then again blow up the submanifold given by $p^\prime=0$ at the respective boundaries $B$. Finally, we then obtain the characterization
$$S^{k,l,m}_{\cl}(\Lambda_g)=\beta^*\left(\rho_\tau^{l-k}\rho_{\tau^\prime}^{l-m}\rho_{ff}^{-l} C^\infty([\overline{\Lambda_g},B]\right).$$

\section*{Acknowledgements}

We thank the SFB1085 \emph{Higher Invariants} at University of Regensburg for fascilitating this collaboration. In addition we thank Bernd Ammann, Jean-Marie Lescure, Victor Nistor and Elmar Schrohe for useful discussions.

\small

\end{document}